\DeclareFontFamily{U}{russian}{}
\DeclareFontShape{U}{russian}{m}{n}
        { <5><6> wncyr5
        <7><8><9> wncyr7
        <10><10.95><12><14.4><17.28><20.74><24.88> wncyr10 }{}
\DeclareSymbolFont{Russian}{U}{russian}{m}{n}
\DeclareSymbolFontAlphabet{\mathcyr}{Russian}
\let\@math@cyr\mathcyr
\renewcommand{\mathcyr}[1]{\@math@cyr{\cyracc #1}}
\DeclareMathSymbol{:}{\mathpunct}{operators}{"3A}
\renewcommand{\phi}{\varphi}
\newcommand{\simto}{\stackrel{\sim}{\to}}
\DeclareMathOperator{\Spec}{Spec}
\DeclareMathOperator{\D}{D}
\renewcommand{\mathbb}{\mathbf}
\newtheorem{lem}[subsection]{Lemma}
\numberwithin{equation}{subsection}
\newtheorem{thm}[subsection]{Theorem}
\newtheorem{assume}[subsection]{Assumption}
\newtheorem{prop}[subsection]{Proposition}
\newtheorem{cor}[subsection]{Corollary}
\theoremstyle{definition}
\newtheorem{example}[subsection]{Example}
\newtheorem{remark}[subsection]{Remark}
\theoremstyle{remark}
\newtheorem{pg}[subsection]{}
\crefname{thm}{Theorem}{Theorems}
\crefname{pg}{Paragraph}{Paragraphs}
\crefname{prop}{Proposition}{Propositions}
\newcommand{\mls}[1]{\mathscr{#1}}
\newcommand{\Sp}{\text{\rm Spec}}
\newcommand{\lotimes}{\otimes ^{\mathbf{L}}}
\newcommand{\qcoh}{\mathrm{qcoh}}
\author{Max Lieblich and Martin Olsson}
\title{Derived equivalences over base  schemes and support of complexes}
\begin{document}
\begin{abstract}
Let $X$ and $Y$ be smooth projective varieties over a field $k$ admitting morphisms $f:X\rightarrow T$ and $g:Y\rightarrow T$ to a third variety $T$.  We formulate conditions on a derived equivalence $\Phi :D(X)\rightarrow D(Y)$ ensuring  that $\Phi$ is induced by a complex $P\in D(X\times _TY)$, defining derived equivalences between the fibers of $f$ and $g$.   We apply our results to the canonical fibration and albanese fibration.
\end{abstract}
\maketitle

\setcounter{tocdepth}{1}
\tableofcontents

\section{Introduction}
\label{sec:introduction}

  Let $X$ and $Y$ be derived equivalent varieties over a field $k$ with equivalence given by a complex $P\in  D(X\times Y)$.  Suppose further given morphisms $f:X\rightarrow T$ and $g:Y\rightarrow T$ to a scheme $T$.  We would like to understand conditions under which we can conclude that the derived equivalence restricts to equivalences on the fibers of $f$ and $g$, at least over some open subset of $T$.  More concretely, we would like to understand conditions on the equivalence that will ensure that $P$ is in the image of $D(X\times _TY)$.  This problem also presents itself in the work of Toda \cite{Toda}.  Our interest in this problem arose, in part, from establishing cases where derived equivalent varieties are birational using canonical morphisms to other varieties.

In general, it appears to be a difficult question to decide when a complex set-theoretically supported on a
closed subscheme $X_0\subset X$ is the pushforward of a complex from $X_0$.
Analogous questions about affine morphisms $X'\to X$ have an interesting
history. As shown in \cite[Theorem 8.1]{rizzardo}, even for base change by field extensions,
it is unusual for a complex with an $X'$ structure to be pushed forward from
$X'$.  
For kernels of derived equivalences, however, we have additional tools at our disposal.
First, on the level of $\infty $-categories a very satisfactory solution was given by Ben-Zvi, Francis, and Nadler \cite[4.7]{BZFN}.  Second, to get results on the level of derived categories, rather than $\infty $-categories, we use variations of the gluing results of Beilinson, Bernstein, and Deligne developed in \cite{BO}.

Our main technical results are not restricted to derived equivalences and we state them in more general form.  Let $S$ be a scheme (in practice this will often be the spectrum of a field) and let
$$
f:X\rightarrow T, \ \ g:Y\rightarrow T
$$
be separated morphisms of $S$-schemes with $T/S$ flat.    Let 
$$
\delta _0, \delta _1:X\times _SY\rightarrow X\times _ST\times _SY
$$
be the morphisms given on scheme-valued points
$$
\delta _0(x, y) = (x, f(x), y), \ \ \delta _1(x, y) = (x, g(y), y).
$$

We consider a pair $(P, \varphi )$ where $P\in D_{\mathrm{qcoh}}(X\times _ST)$ is an object of the derived category of complexes with quasi-coherent cohomology sheaves on $X\times _ST$ and 
$$
\varphi :\delta _{0*}P\rightarrow \delta _{1*}P
$$
is an isomorphism in $D_{\mathrm{qcoh}}(X\times _ST\times _SY)$ such that the pushforward (all functors are derived)
$$
\mathrm{pr}_{13*}\varphi :P \simeq \mathrm{pr}_{13*}\delta _{0*}P\rightarrow \mathrm{pr}_{13*}\delta _{1*}P\simeq P
$$
is the identity morphism.  Note that if
$$
\epsilon :X\times _TY\rightarrow X\times _SY
$$
is the natural inclusion then $\delta _0\circ \epsilon = \delta _1\circ \epsilon $ and therefore for a complex $P_0\in D_{\mathrm{qcoh}}(X\times _TY)$ the pushforward $\epsilon _*P_0$ admits a natural such isomorphism $\varphi $ over $X\times _ST\times _SY$.

\begin{thm}\label{T:1.1}
Assume that either $f$ or $g$ is flat and that $(P, \varphi )$ is a pair as above.  Assume further that $P$ is a perfect complex and that the derived pushforward $\mathrm{pr}_{1*}\mls RHom (P, P)$ lies in $D^{\geq 0}(X)$.  Then there exists a unique pair $(P_0, \lambda )$ consisting of a complex $P_0\in D_{\mathrm{qcoh}}^b(X\times _TY)$ and $\lambda :\epsilon _*P_0\simeq P$ identifying $\varphi $ with the canonical isomorphism between the pushforwards of $P_0$.
\end{thm}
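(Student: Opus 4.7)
The plan is to attack the theorem in three stages: a support reduction, a descent step, and a uniqueness check. The underlying geometric input is that $\delta _0$ and $\delta _1$ are closed immersions whose images in $X\times _ST\times _SY$ intersect scheme-theoretically precisely along the image of $\epsilon $. For the support reduction, I first observe that $\delta _{0*}P$ is set-theoretically supported on the graph $\delta _0(X\times _SY)=\{(x,f(x),y)\}$ and $\delta _{1*}P$ on $\delta _1(X\times _SY)=\{(x,g(y),y)\}$; since $\varphi $ is an isomorphism between the two, both sheaves are in fact supported on the intersection $\{(x,t,y):f(x)=t=g(y)\}=\epsilon (X\times _TY)$. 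Pulling back along the closed immersion $\delta _0$ then forces the set-theoretic support of $P$ itself to lie in $\epsilon (X\times _TY)\subset X\times _SY$.

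For the descent step, I would invoke the BBD-style gluing results developed in \cite{BO}. The flatness hypothesis on $f$ or $g$ is essential here: it guarantees that the fibre product $X\times _TY$ agrees with the derived fibre product, so that $\epsilon _*$ is well-behaved and no extra Tor terms appear in the derived intersection. The datum $\varphi $, together with the normalization $\mathrm{pr}_{13*}\varphi =\mathrm{id}_P$, supplies precisely the descent datum along $\epsilon $ required by the gluing lemma, and the hypothesis $\mathrm{pr}_{1*}\mls RHom (P,P)\in D^{\geq 0}(X)$ provides the vanishing of negative self-Exts needed to rigidify that datum into an honest object $P_0\in D^b_{\mathrm{qcoh}}(X\times _TY)$ equipped with $\lambda :\epsilon _*P_0\simeq P$ identifying $\varphi $ with the canonical pushforward isomorphism. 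Uniqueness of $(P_0,\lambda )$ up to unique isomorphism follows from the same vanishing, since any two competing descents would differ by an element of $\mathrm{Hom}^{<0}(P,P)=0$.

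The technical heart of the argument is the descent step. At the level of stable $\infty $-categories the pair $(P,\varphi )$ assembles essentially formally into an object of a limit $\infty $-category which identifies with the enhancement of $D_{\mathrm{qcoh}}(X\times _TY)$ by \cite{BZFN}; the delicate point is to land back inside the ordinary triangulated category and to match $\varphi $ with the canonical descent isomorphism, rather than merely with one that agrees up to higher coherence data. The $D^{\geq 0}$ hypothesis plays the Schlessinger-type role of killing the obstructions to this translation, while flatness is what lets one promote set-theoretic support into a genuine pushforward (without flatness the derived intersection picks up Tor contributions that obstruct literal descent). Thus both hypotheses enter precisely where a naive descent argument would fail.
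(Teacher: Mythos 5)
Your outline identifies the right ingredients (BBD gluing, the $D^{\geq 0}$ hypothesis, flatness ensuring Tor-independence) but leaves a genuine gap at the heart of the descent step. The set-theoretic support observation you begin with is correct but far too weak: set-theoretic containment in $\epsilon(X\times_T Y)$ does not by itself give a complex on $X\times_T Y$. The paper instead first proves a nontrivial auxiliary equivalence, Theorem~\ref{T:adjunction}, showing that pushforward along the augmentation is an equivalence $\epsilon_*:D^-_{\mathrm{qcoh}}(X\times_T Y)\simeq D^-_{\mathrm{qcoh}}(E(f,g)^\bullet)$, where $E(f,g)^\bullet$ is the cosimplicial bar-type scheme with $E(f,g)^n=X\times_S T^{\times n}\times_S Y$. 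Its proof passes to the affine case and shows that the normalized complex of the simplicial ring $\mls A_\bullet$ is quasi-isomorphic to $A\otimes_R B$; this is exactly where the flatness of one of $f,g$ enters. Without formulating this equivalence the gluing step has no target to land in, and the appeal to \cite{BZFN} (which the paper itself mentions only as background) does not substitute for it, since the paper deliberately works at the level of triangulated categories.

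The sharper gap is in the Ext-vanishing step. You invoke the $D^{\geq 0}$ hypothesis as if it simply kills ``$\mathrm{Hom}^{<0}(P,P)$,'' but the gluing lemma \cite[1.4]{BO} requires vanishing of negative Ext groups on $E(f,g)^m$ for all $m$, not just on $E(f,g)^0=X\times_S Y$. The paper proves
$$
\mathrm{Ext}^s_{E(f,g)^m}(\gamma_{0*}P,\gamma_{0*}P)=0 \quad \text{for } s<0,
$$
where $\gamma_0:X\times_S Y\hookrightarrow E(f,g)^m$ is a diagonal-type closed immersion; by adjunction and perfectness of $P$ this reduces to showing that $\mls R\otimes^{\mathbf{L}}R\mathrm{pr}_{1*}\mls RHom(P,P)$ lies in $D^{\geq 0}(X)$, where $\mls R=\mls RHom(\mathbf{L}t^*t_*\mls O_X,\mls O_X)$ and $t:X\to X\times_S T^{\times m}$ is the map $x\mapsto(x,f(x),\dots,f(x))$. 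The key Koszul-type observation that $\mls R$ is locally a complex of projectives concentrated in degrees $\geq 0$ is precisely why the hypothesis is phrased in terms of the pushforward $\mathrm{pr}_{1*}\mls RHom(P,P)$ to $X$ rather than as a global Ext statement on $X\times_S Y$. This computation is absent from your outline, and without it the vanishing on the higher levels $E(f,g)^m$—and hence the applicability of the gluing lemma—does not follow.
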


We discuss two main applications:

\begin{pg}[Canonical fibration]
Let $X$ and $Y$ be smooth projective varieties over a field $k$ related by a derived equivalence $\Phi :D(X)\rightarrow D(Y)$ given by a complex $P\in D(X\times Y)$.  Let $R_X:=\oplus _{n\geq 0}\Gamma (X, K_X^{\otimes n})$ (resp. $R_Y:=\oplus _{n\geq 0}\Gamma (Y, K_Y^{\otimes n})$) denote the canonical ring of $X$ (resp. $Y$) so we have rational maps
$$
c_X:\xymatrix{X\ar@{-->}[r]& \mathrm{Proj}(R_X)}, \ \ c_Y:\xymatrix{Y\ar@{-->}[r]& \mathrm{Proj}(R_Y)}.
$$
It is well-known (this is stated explicitly in \cite[4.4]{Toda} and attributed to \cite{orlov} in \cite[6.1]{huybrechtsFM}) that $\Phi $ induces a canonical isomorphism 
\begin{equation}\label{E:caniso}
\tilde \tau :R_X\rightarrow R_Y,
\end{equation}
and therefore also an isomorphism of schemes
$$
\tau :\mathrm{Proj}(R_X)\simeq \mathrm{Proj}(R_Y).
$$
Let $U_X\subset X$ (resp. $U_Y\subset Y$) be the complement of the base locus of $\{K_X^{\otimes n}\}_{n\geq 0}$ (resp. $\{K_Y^{\otimes n}\}_{n\geq 0}$), so $c_X$ (resp. $c_Y$) is a morphism on $U_X$ (resp. $U_Y$).  Also, for an open subset $A\subset \mathrm{Proj}(R_Y)$ let $U_{X, A}$ (resp. $U_{Y, A}$) denote the preimage of $A$ under $\tau \circ c_X$ (resp. $c_Y$).  
\end{pg}

\begin{thm}\label{T:1.3} There exists a dense open subset $A\subset \mathrm{Proj}(R_Y)$ such that the restriction of $P$ to $U_{X, A}\times Y$ is the image of an object $$
P_0\in D(U_{X, A}\times _{\tau \circ c_X, A, c_Y}U_{Y, A})
$$
whose support is proper over both $U_{X, A}$ and $U_{Y, A}$.
\end{thm}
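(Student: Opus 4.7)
The plan is to apply Theorem~\ref{T:1.1} with $S=\Spec(k)$, $T=A$, $(X,f)=(U_{X,A},\tau\circ c_X)$, $(Y,g)=(U_{Y,A},c_Y)$, and kernel $P|_{U_{X,A}\times U_{Y,A}}$, for a suitable dense open $A\subset\mathrm{Proj}(R_Y)$. The work is to (i) choose $A$ so the support and flatness hypotheses hold, (ii) construct the descent datum $\varphi$ from $\tilde\tau$, and (iii) verify the $\mls RHom$-positivity hypothesis.

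First, shrink $A$ so that $c_Y|_{U_{Y,A}}$ is flat over $A$ (generic flatness) and so that the support inclusion
$$
\mathrm{supp}(P)\cap\bigl((U_{X,A}\times Y)\cup(X\times U_{Y,A})\bigr)\subset U_{X,A}\times U_{Y,A}
$$
holds. This inclusion uses that $\Phi$ commutes with the Serre functor: for $x\in U_X$, the object $\Phi(k(x))=P|_{\{x\}\times Y}$ has its support controlled by the fiber of the Iitaka fibration $c_Y$ through the point $\tau\circ c_X(x)\in\mathrm{Proj}(R_Y)$, and away from a proper closed subset of $\mathrm{Proj}(R_Y)$ (whose preimage we remove) this support lies in $U_Y$; the symmetric argument with $\Phi^{-1}$ handles the second inclusion.

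To construct $\varphi$, commutation of $\Phi$ with Serre functors together with $\dim X=\dim Y$ yields canonical isomorphisms
$$
\sigma_n:P\otimes\mathrm{pr}_X^*K_X^{\otimes n}\simto P\otimes\mathrm{pr}_Y^*K_Y^{\otimes n}
$$
for each $n\geq 0$, and the isomorphism $\tilde\tau$ of \eqref{E:caniso} is characterized by the fact that $\sigma_n$ intertwines multiplication by $\mathrm{pr}_X^*\tilde\tau^{-1}(s)$ with multiplication by $\mathrm{pr}_Y^*s$ for every $s\in R_{Y,n}$. Covering $A$ by basic opens $D_+(s)$ (with $s\in R_{Y,n}$ for $n$ sufficiently divisible), this intertwining implies that the rational functions $\tilde\tau^{-1}(t)/\tilde\tau^{-1}(s)^k$ on $U_{X,D_+(s)}$ and $t/s^k$ on $U_{Y,D_+(s)}$ act identically on $P$ through the two projections. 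These local identifications assemble through the Proj construction to a global isomorphism
$$
\varphi:\delta_{0*}\bigl(P|_{U_{X,A}\times U_{Y,A}}\bigr)\simto\delta_{1*}\bigl(P|_{U_{X,A}\times U_{Y,A}}\bigr)
$$
on $U_{X,A}\times_k A\times_k U_{Y,A}$, with $\mathrm{pr}_{13*}\varphi=\id$ automatic from the construction; the ring-homomorphism property of $\tilde\tau$ is precisely what ensures compatibility on overlaps.

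For the remaining hypotheses of Theorem~\ref{T:1.1}, $P$ is perfect as the kernel of a derived equivalence of smooth projective varieties, and $\mathrm{pr}_{1*}\mls RHom(P,P)\in D^{\geq 0}(U_{X,A})$ because the derived fiber at a geometric point $x$ computes $\Ext_Y^\bullet(\Phi(k(x)),\Phi(k(x)))=\Ext_X^\bullet(k(x),k(x))$, concentrated in non-negative degrees, with cohomology and base change upgrading this to the global statement. Theorem~\ref{T:1.1} then yields $P_0\in D_{\mathrm{qcoh}}^b(U_{X,A}\times_A U_{Y,A})$ with $\epsilon_*P_0\simeq P|_{U_{X,A}\times U_{Y,A}}$; combining with the support inclusion gives $P|_{U_{X,A}\times Y}\simeq(j\circ\epsilon)_*P_0$ for $j$ the open immersion $U_{X,A}\times U_{Y,A}\hookrightarrow U_{X,A}\times Y$, and $\mathrm{supp}(P_0)$ is then closed in $U_{X,A}\times Y$ and in $X\times U_{Y,A}$, hence proper over $U_{X,A}$ and over $U_{Y,A}$ respectively. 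The main technical hurdle is step (ii), specifically the global gluing of the local compatibility data through the Proj cover and the verification of the identity condition on $\varphi$.
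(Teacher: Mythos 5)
Your high-level strategy is correct and matches the paper's: apply Theorem~\ref{T:1.1} over a dense open $A$, shrunk by generic flatness, with the descent datum $\varphi$ ultimately coming from the isomorphism $\tilde\tau$ of canonical rings. The hypothesis-checking you do is mostly sound: $P$ is perfect, the $D^{\geq 0}$ condition on $R\mathrm{pr}_{1*}\mls RHom(P,P)$ holds, and properness of the support follows once you have the object $P_0$ and the set-theoretic support statement. But the step you yourself flag as ``the main technical hurdle''---constructing the global $\varphi$---is the actual content of the proof, and your sketch of it has two concrete gaps.

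First, Theorem~\ref{T:1.1} requires not just an isomorphism $\varphi:\delta_{0*}P\to\delta_{1*}P$ with $\mathrm{pr}_{13*}\varphi=\id$ but also the cocycle condition on $E(f,g)^2$ (the commutativity of diagram~\eqref{E:2.1.1}); your proposal never mentions it. Second, gluing local isomorphisms in the derived category over the affine cover $\{D_+(s)\}$ is not formal: to check that the local maps agree on overlaps and then actually descend, you need vanishing of the relevant negative $\Ext$ groups (and a computation of $\Ext^0$ identifying the possible isomorphisms). You cannot simply ``assemble through the Proj construction.'' The paper sidesteps \v{C}ech gluing over $A$ entirely by using a generically \'etale linear projection $\mathbf{P}\dashrightarrow\mathbf{P}(V)$ and Beilinson's resolution of the diagonal on $\mathbf{P}(V)$: pulling the resolution back gives filtered pieces $P_X^s$ whose negative self-$\Ext$ vanish (because $R\mathrm{pr}_{1*}\mls RHom(P,P)\in D^{\geq 0}$ and the $\Omega^i(i)$'s are exceptional), so BBD gluing for the filtered derived category produces $\mls P_X$, $\mls P_Y$ and a canonical comparison $\lambda$ directly, and $\varphi$ is obtained by restricting to a flat locus $B$. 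The paper then verifies the cocycle condition and the $\mathrm{pr}_{13*}$-identity condition via the $\Ext$-computation in Lemma~\ref{L:4.4}. You also omit the case-split between infinite and finite $k$, which the paper needs because the generic linear projection requires an infinite field.

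A third, smaller gap: the set-theoretic support inclusion $\mathrm{supp}(P)\cap(U_{X,A}\times Y)\subset U_{X,A}\times_A U_{Y,A}$ is asserted but not proved. In the paper this is the content of Lemmas~\ref{L:3.12}, \ref{L:3.13}, \ref{L:3.16} and Corollary~\ref{C:3.17}: one first shows the support lies in $U_{X,n}\times U_{Y,n}$ using that $\alpha:P_x\to P_x\otimes K_Y^{\otimes n}$ is an isomorphism for suitable $\alpha$, and then shows it lies over the graph of $\tau$ via an alteration/determinant argument (if $\mls F\otimes f^*\mathcal{O}(1)\simeq\mls F$ on a smooth proper $Z$ then $f^*\mathcal{O}(1)$ is torsion) together with connectedness from $\End(P_x)=k$. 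Your ``support controlled by the fiber of the Iitaka fibration'' is the conclusion of this argument, not a justification of it.

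Finally, note that by working with $U_X$ rather than a fixed $U_{X,n}$ your proposal implicitly relies on finite generation of $R_Y$ to invoke $\mathrm{Proj}(R_Y)$; the paper deliberately works with $U_{X,n}=U_X$ for $n\gg 0$ (Theorem~\ref{T:3.4}) to avoid assuming finite generation in positive characteristic.
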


\begin{remark} (i) The above generalizes earlier work of Toda \cite[1.1]{Toda}.

(ii) Over fields of characteristic $0$ the canonical ring is known to be finitely generated.  Over fields of positive characteristic, however, this is not known so a little more care is needed in the arguments presented below.

(iii) We also prove a version of the above theorem replacing $U_X$ with the maximal open subset $W_X$ over which the morphism $c_X$ extends.
\end{remark}

\begin{pg}[Albanese fibration]\label{P:Albanese}
Let $X$ be a smooth projective variety over a field $k$, let $\mathrm{Pic}^0(X)$ (resp. $\mathrm{Aut}^0(X)$) denote the connected component of the identity of the Picard scheme of $X$ (resp. the automorphism group scheme of $X$), and set
$$
\mathbf{R}_X^0:= \mathrm{Pic}^0(X)\times \mathrm{Aut}^0(X).
$$
As we recall in section \ref{S:sec4}, if 
$$
\Phi :D(X)\rightarrow D(Y)
$$
is a derived equivalence given by a complex $P\in D(X\times Y)$, then $\Phi $ induces an isomorphism
$$
\Phi _{\mathbf{R}^0}:\mathbf{R}_X^0\simeq \mathbf{R}_Y^0.
$$

Let
$$
c_X:X\rightarrow \mathbf{T}_X^0, \ \ c_Y:Y\rightarrow \mathbf{T}_Y^0
$$
be the Albanese torsors of $X$ and $Y$ (see section \ref{S:sec4} for more discussion).  For an open subset $U\subset \mathbf{T}_X^0$ let $X_U$ denote $c_X^{-1}(U)$, and similarly for $Y$.
\end{pg}

\begin{thm}[Theorem \ref{T:4.9}]\label{T:1.6} Assume that $\mathrm{Pic}^0_X$ and $\mathrm{Pic}^0_Y$ are reduced and that $\Phi _{\mathbf{R}^0}$ sends $\mathrm{Pic}^0_X$ to $\mathrm{Pic}^0_Y$ and therefore defines an isomorphism
$$
\Phi _{\mathrm{Pic}^0}:\mathrm{Pic}^0_X\rightarrow \mathrm{Pic}^0_Y.
$$
Then $\Phi $ induces an isomorphism of schemes 
$$
\Phi _{\mathbf{T}^0}:\mathbf{T}_X^0\rightarrow \mathbf{T}_Y^0
$$
compatible with the actions of the Picard schemes, and there exists a dense open subset $A\subset \mathbf{T}_Y^0$ such that 
$$
P|_{X_{\Phi _{\mathbf{T}^0}^{-1}(A)}\times Y_A}
$$
is in the image of
$$
D(X_{\Phi _{\mathbf{T}^0}^{-1}(A)}\times _AY_A).
$$
\end{thm}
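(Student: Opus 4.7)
The plan is to apply Theorem~\ref{T:1.1} to $f := \Phi_{\mathbf{T}^0} \circ c_X$ and $g := c_Y$, viewed as morphisms $X, Y \to T := \mathbf{T}^0_Y$, once the torsor isomorphism $\Phi_{\mathbf{T}^0}$ has been constructed. The proof therefore decomposes into three tasks: (a) building $\Phi_{\mathbf{T}^0}$ and checking its Picard-compatibility, (b) producing a descent datum $\varphi : \delta_{0*}P \to \delta_{1*}P$ on $X \times_k T \times_k Y$ with $\mathrm{pr}_{13*}\varphi = \mathrm{id}_P$, and (c) verifying the perfectness and Ext-vanishing hypotheses of Theorem~\ref{T:1.1} on a dense open $A \subset T$.

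For task (a), the reducedness of $\mathrm{Pic}^0_X$ and $\mathrm{Pic}^0_Y$ identifies each with the dual abelian variety of the Albanese variety acting on the corresponding torsor, so that $\Phi_{\mathrm{Pic}^0}$ dualizes to an isomorphism of Albanese varieties. To upgrade this to a torsor isomorphism I would track how $\Phi$ moves points: for a generic $x \in X$, the complex $\Phi(\mathcal O_x) \in D(Y)$ is a shifted twist of a skyscraper, and the compatibility of the Rouquier isomorphism $\Phi_{\mathbf{R}^0}$ with the kernel $P$ should force the Albanese class of its support to depend equivariantly on $c_X(x)$. This yields a unique torsor isomorphism $\Phi_{\mathbf{T}^0}$, and Picard-equivariance falls out of the construction.

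For task (b), $\delta_0$ and $\delta_1$ differ only in their $T$-coordinate and agree on the closed subscheme $X \times_T Y \hookrightarrow X \times_k Y$. The isomorphism $\varphi$ is constructed from the universal family of line bundles on $X$ (resp.\ $Y$) parameterized by $T$ arising from the Albanese/Picard duality in task (a): twisting by this family canonically identifies $\delta_{0*}P$ with $\delta_{1*}P$, and the normalization $\mathrm{pr}_{13*}\varphi = \mathrm{id}_P$ is immediate from the construction.

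For task (c), $P$ is perfect since it is the kernel of a derived equivalence between smooth projective varieties. The bound $\mathrm{pr}_{1*}R\shom(P,P) \in D^{\geq 0}(X)$ holds at a generic point of $X$ because $\Phi(\mathcal O_x)$ is a simple object with no negative self-Ext's, and this extends to a dense open by upper semicontinuity of Ext-dimensions. Generic flatness of $c_Y$ then lets us shrink $A$ so that $g$ becomes flat on $Y_A \to A$, at which point Theorem~\ref{T:1.1} produces the descended object $P_0$ on $X_{\Phi_{\mathbf{T}^0}^{-1}(A)} \times_A Y_A$. The main obstacle is task (a): building $\Phi_{\mathbf{T}^0}$ from $\Phi$ and $\Phi_{\mathrm{Pic}^0}$ in a canonical enough way that the compatibility with the kernel $P$ needed to produce $\varphi$ in task (b) is automatic.
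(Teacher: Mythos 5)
The proposal has the right skeleton (construct $\Phi_{\mathbf{T}^0}$, produce a descent datum by twisting with universal line bundles, invoke Theorem~\ref{T:1.1}), but two of the three tasks contain serious gaps.

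In task (a), the claim that ``for a generic $x\in X$, the complex $\Phi(\mathcal O_x)\in D(Y)$ is a shifted twist of a skyscraper'' is false in general: for a Fourier--Mukai equivalence between an abelian variety and its dual, $P_x$ is a line bundle, and more generally the fiber $P_x$ can be a complex with positive-dimensional support. (It is a theorem of Bondal--Orlov that \emph{if} every $P_x$ were a shifted twisted skyscraper, then $\Phi$ would be essentially geometric, which is not the situation here.) The paper avoids this entirely by constructing $\rho=\Phi_{\mathbf T^0}$ \emph{formally}: $\mathbf T^0_X$ is defined as the torsor of isomorphism classes of Picard-stack sections of $\mathscr{P}\mathrm{ic}^0_X\to\mathrm{Pic}^0_X$, and the Rouquier equivalence of $\mathbf G_m$-gerbes $\tilde\gamma:\mathscr{P}\mathrm{ic}^0_X\to\mathscr{P}\mathrm{ic}^0_Y$ (over $\gamma:\mathrm{Pic}^0_X\to\mathrm{Pic}^0_Y$) tautologically induces an isomorphism of these torsors of sections, compatible with $\gamma^t:\mathrm{Alb}_X\to\mathrm{Alb}_Y$. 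The support statement you try to use as input (that $\Supp P_x$ lands in a single Albanese fiber) is in fact an \emph{output}, proved afterward in Lemma~\ref{L:support} using the universal-line-bundle identity~\eqref{E:tautiso} together with Lemma~\ref{L:tensorfromT}.

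In task (b)/(c), your reduction to Theorem~\ref{T:1.1} omits the cocycle condition. The introduction's statement of Theorem~\ref{T:1.1} is terse, but its proof (via the BBD gluing lemma for cosimplicial topoi, \cite[1.4]{BO}) genuinely requires the isomorphism $\varphi:\delta_{0*}P\to\delta_{1*}P$ to satisfy the commutativity of diagram~\eqref{E:2.1.1} on $E(f,g)^2$. Constructing $\varphi$ and checking $\mathrm{pr}_{13*}\varphi=\id_P$ does \emph{not} give this for free. The paper verifies it by showing, via the Grothendieck-duality/$\Ext$-vanishing computation carried over from Lemma~\ref{L:4.4}, that after passing to a dense open $A$ of the scheme-theoretic image $W_X\cong W_Y$ over which $f$ and $g$ are flat, the map
$$
\Hom_{E(f_A,g_A)^2}(t_{2*}P,\,t_{0*}P)\longrightarrow \Hom_{E(f_A,g_A)^0}(P,P)
$$
is an isomorphism, so both sides of the cocycle diagram push forward to $\id_P$ and hence must agree. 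Note also that one must work over $W_X$ (the image of $c_X$) rather than over all of $\mathbf T^0_Y$, as $c_Y$ is typically not dominant onto the full torsor and flatness must be achieved over a dense open of the correct base. Finally, the bound $\mathrm{pr}_{1*}R\shom(P,P)\in D^{\ge 0}(X)$ holds everywhere because $P$ is the kernel of an equivalence (each $P_x$ has vanishing negative self-$\Ext$ by conjugating from $\kappa(x)$), so there is no need for a semicontinuity argument.
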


\begin{remark}
(a) The assumption that the groups schemes $\mathrm{Pic}^0_X$ and $\mathrm{Pic}^0_Y$ are reduced holds for example if $k$ has characteristic $0$.

(b) The assumption that $\Phi _{\mathbf{R}^0}$ preserves the Picard schemes frequently holds.  For example, if the automorphism group schemes are affine.  Varieties for which the automorphism group scheme has a nontrivial abelian variety as quotient can be classified;  see \cite[2.4]{popaschnell}.
\end{remark}

\begin{pg} The body of the article is divided into three sections.  Section \ref{sec:derived supp} is devoted to the proof of \ref{T:1.1}.  The key ingredient is \ref{T:adjunction}, which reduces the proof to a problem of gluing in the derived category of a cosimplicial scheme. Theorem \ref{T:1.1} is obtained from this and a variant of the BBD gluing lemma for cosimplicial schemes.  Section \ref{S:sec3} is concerned with the canonical fibration.  In this section we prove, in particular, theorem \ref{T:1.3}.  In addition to the results of \ref{sec:derived supp} we use in a key way the Beilinson resolution of the diagonal of projective space.  Finally in section \ref{S:sec4} we apply a similar analysis to the albanese fibration proving \ref{T:1.6}.
\end{pg}
 
 \subsection{Acknowledgments}
The authors have benefited from conversations with many people.  We thank, in 
particular, 
Dan Bragg, Denis-Charles Cisinski, Katrina Honigs, Johan de Jong,  Christian Schnell, Paolo Stellari, Sofia Tirabassi, and Gabriele Vezzosi.

During the work on this paper, Lieblich was partially supported by NSF grant 
DMS-1600813 and a Simons Foundation Fellowship, and Olsson was partially
supported by NSF grants DMS-1601940 and DMS-1902251.

\section{Support of complexes and relativization of equivalences}\label{sec:derived supp}

\subsection{Complexes on fiber products}

For the convenience of the reader we review the result \cite[4.7]{BZFN} using the more classical language of derived categories.

\begin{pg}\label{P:setup} We fix a base scheme $S$, a flat $S$-scheme $T/S$,  and consider two separated
morphisms of $S$-schemes 
$$
f:X\rightarrow T, \ \ g:Y\rightarrow T.
$$
Let $\gamma _f:X\rightarrow X\times _ST$ (resp. $\delta _g:Y\rightarrow T\times _SY$) be the maps given on scheme-valued points by
$$
\gamma _f(x) = (x, f(x)), \ \ \delta _g(y) = (g(y), y).
$$
We then get a cosimplicial scheme 
$$
E(f, g)^\bullet 
$$
by the following variant of the bar construction.  Set
$$
E(f, g)^n = X\times _ST^{\times n}\times _SY,
$$
where $T^{\times n}$ denotes the $n$-fold fiber product of $T$ with itself over $S$ and we make the convention that $T^0 = S$ so $E(f, g)^0 = X\times _SY$.
Define maps
$$
\delta _i:E(f, g)^{n-1}\rightarrow E(f, g)^n, \ \ i=0, \dots, n
$$
by 
$$
\delta _0 = \gamma _f\times \mathrm{id}_{T^{\times (n-1)}\times Y}, \ \ \delta _n = \mathrm{id}_{X\times T^{\times (n-1)}}\times \delta _g, 
$$
and, for $i=1, \dots, n-1$, 
$$
\delta _i = \mathrm{id}_{X\times T^{\times (i-1)}}\times \Delta _T\times \mathrm{id}_{T^{\times (n-i-1)}\times Y}.
$$
Define maps 
$$
\sigma _i:E(f, g)^{n+1}\rightarrow E(f, g)^n, \ \ i=0, \dots, n,
$$
by the formula
$$
\sigma _i(x, t_1, \dots, t_{n+1}, y) = (x, t_1, \dots, \hat {t}_{i+1}, \dots, t_{n+1}, y)
$$
on scheme-valued points.
\end{pg}

\begin{lem} The maps $\delta _i$ and $\sigma _i$ satisfy the cosimplicial identities \cite[\href{https://stacks.math.columbia.edu/tag/016K}{Tag 016K}]{stacks-project} and therefore define a cosimplicial scheme $E(f, g)^\bullet $.
\end{lem}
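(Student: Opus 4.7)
The plan is to verify the cosimplicial identities directly on scheme-valued points. Under the conventions above, an element of $E(f,g)^n$ has the form $(x, t_1, \dots, t_n, y)$, and the maps act as follows: $\delta _0$ inserts $f(x)$ before $t_1$; $\delta _n$ inserts $g(y)$ after $t_{n-1}$; $\delta _i$ for $0 < i < n$ duplicates $t_i$; and $\sigma _i$ deletes $t_{i+1}$.

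With these explicit formulas in hand, each of the three standard families of relations — coface-coface $\delta _j\delta _i = \delta _i\delta _{j-1}$ for $i<j$, codegeneracy-codegeneracy $\sigma _j\sigma _i = \sigma _i\sigma _{j+1}$ for $i\leq j$, and the mixed identities $\sigma _j\delta _i \in \{\delta _i\sigma _{j-1}, \mathrm{id}, \delta _{i-1}\sigma _j\}$ depending on the relation between $i$ and $j$ — decomposes into a small number of cases according to whether the indices involved are interior (corresponding to the diagonal of $T$) or boundary (corresponding to $\gamma _f$ or $\delta _g$). For interior-interior pairs the relations reduce to the well-known cosimplicial identities for the bar construction on the comonoid $(T, \Delta _T)$. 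For boundary cases the only additional input is that $\mathrm{pr}_T\circ \gamma _f = f\circ \mathrm{pr}_X$, together with the analogous identity for $\delta _g$; for instance, $\delta _1\delta _0 = \delta _0\delta _0$ holds because both composites send $(x, y)$ to $(x, f(x), f(x), y)$.

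The main obstacle is pure bookkeeping through the cases in which one or both of $\delta _0, \delta _n$ appear, since in these the defining formulas use $\gamma _f, \delta _g$ rather than $\Delta _T$; there is no conceptual difficulty. As an alternative, one can recognize $E(f, g)^\bullet $ as the two-sided bar construction associated to $T$, viewed as a comonoid in $S$-schemes via its diagonal $\Delta _T$, acting on $X$ via $\gamma _f$ and on $Y$ via $\delta _g$. The cosimplicial identities then follow from the general theory of bar constructions and require no case analysis.
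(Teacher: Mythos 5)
Your proposal is correct and matches the paper's approach: the paper's proof is simply ``This is immediate from the definitions,'' and your case analysis on scheme-valued points is exactly what that terse statement unpacks to, while your closing observation that $E(f,g)^\bullet$ is the two-sided bar construction on the comonoid $(T,\Delta_T)$ with comodules $(X,\gamma_f)$ and $(Y,\delta_g)$ is the conceptual rephrasing implicit in the paper's citation of \cite[4.7]{BZFN}.
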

\begin{proof} 
This is immediate from the definitions.
\end{proof}

\begin{pg} We have an augmentation
$$
\epsilon :X\times _TY\rightarrow E(f, g)^\bullet,
$$
that is, a map from $X\times_T Y$ to $E(g,f)^0$ compatible with each map in the cosimplicial structure.
Writing 
$$
\epsilon _n:X\times _TY\rightarrow E(f, g)^n
$$
for the induced map in degree $n$, the map $\epsilon _n$ is given on scheme valued points by
$$
(x, y)\mapsto (x, f(x), \dots, f(x), y).
$$
Note that since $f(x) = g(y)$ we could also write this formula using $g(y)$. One can also think of the augmentation as a morphism from the constant cosimplicial scheme on $X\times_T Y$.
\end{pg}

\begin{pg}

Let $\mathrm{Mod}(E(f, g)^\bullet )$ denote the category of systems $(\{\mls F_n\}, \varphi _\delta )$, where $\mls F_n$ is a  sheaf of $\mls O_{E(f, g)^n}$-modules on the \'etale site of  $E(f, g)^n$ and for every morphism $\delta :E(f, g)^n\rightarrow E(f, g)^m$ given by the cosimplicial structure we have a morphism
$$
\varphi _\delta :F_m\rightarrow \delta _*F_n,
$$
and these morphisms are compatible with compositions.  The category $\mathrm{Mod}(E(f, h)^\bullet )$ is abelian, with kernels and cokernels defined level-wise (in particular, restriction to any particular $E(f,g)^n$ is an exact functor).  We let 
$$
D(E(f, g)^\bullet )
$$
denote the associated derived category and 
$$
D^{(-)}(E(f, g)^\bullet )\subset D(E(f, g)^\bullet )
$$
the subcategory of complexes whose restriction to each $E(f, g)^n$ is bounded above.  

For an object $\mls P\in D(E(f, g)^\bullet )$ we let $\mls P_n$ denote its restriction to $E(f, g)^n$.  For a morphism $\delta :E(f, g)^n\rightarrow E(f, g)^m$ we have a map 
$$
\varphi _\delta :\mls P_m\rightarrow R\delta _*\mls P_n
$$
in the derived category of $E(f, g)^m$.
We write
$$
D_\qcoh  (E(f, g)^\bullet )\subset D(E(f, g)^\bullet )
$$
for the subcategory of complexes for which the sheaves $\mls H^i(\mls P_m)$ are quasi-coherent and the maps
$\varphi _\delta $ are all isomorphisms, and 
$$
D^-_\qcoh (E(f, g)^\bullet ):= D^{(-)}(E(f, g)^\bullet )\cap D_\qcoh (E(f, g)^\bullet ).
$$
Pushforward along the augmentation defines a functor
\begin{equation}\label{E:pushmap}
\epsilon _*:D^-_\qcoh (X\times _TY)\rightarrow D^-_\qcoh (E(f, g)^\bullet ).
\end{equation}
\end{pg}

\begin{thm}\label{T:adjunction} If either $f$ or $g$ is flat then the functor \eqref{E:pushmap} is an equivalence.
\end{thm}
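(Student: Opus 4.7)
By the symmetry of the statement we may assume $f$ is flat.  The plan is to construct an explicit quasi-inverse $L\colon D^-_\qcoh(E(f,g)^\bullet)\to D^-_\qcoh(X\times_T Y)$ to $\epsilon_\ast$.  Given a cartesian complex $\mls P$, the isomorphisms $\varphi_{\delta_0}\colon \mls P_1\simeq R\delta_{0\ast}\mls P_0$ and $\varphi_{\delta_1}\colon \mls P_1\simeq R\delta_{1\ast}\mls P_0$ assemble into a canonical identification
$$
\alpha\colon R\delta_{0\ast}\mls P_0 \simeq R\delta_{1\ast}\mls P_0
$$
in $D^-_\qcoh(X\times_S T\times_S Y)$, together with higher coherences at each level $n\geq 2$.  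I would interpret this as a descent datum for the immersion $i:=\epsilon_0\colon X\times_T Y\hookrightarrow X\times_S Y$ and aim to realize $\mls P_0$ canonically as $i_\ast L(\mls P)$ for a unique $L(\mls P)\in D^-_\qcoh(X\times_T Y)$.

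The core technical step is to produce $L(\mls P)$ from $\alpha$ and its higher coherences.  The key input is flat base change: because $f$ is flat, pushforwards along the face maps interact predictably with pullbacks along the degeneracies, and the cartesian isomorphism $\alpha$ can be translated into the statement that the scheme-theoretic support of $\mls P_0$ is contained in $i(X\times_T Y)$.  The higher coherences then rigidify this support condition into an actual descent of $\mls P_0$ along $i_\ast$.  The mechanism here is the cosimplicial BBD-style gluing machinery developed in \cite{BO} (as signposted in the introduction), specialized to $E(f,g)^\bullet$ augmented by $X\times_T Y$.

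Once $L$ is in hand, checking it inverts $\epsilon_\ast$ reduces to two verifications.  For $L\circ \epsilon_\ast\simeq \id$: if $P_0\in D^-_\qcoh(X\times_T Y)$, then $(\epsilon_\ast P_0)_0 = i_\ast P_0$, so the uniqueness built into the descent construction forces $L(\epsilon_\ast P_0)\simeq P_0$.  For $\epsilon_\ast\circ L\simeq \id$: using the factorization $\epsilon_n=\phi_n\circ i$, where $\phi_n\colon X\times_S Y\to X\times_S T^{\times n}\times_S Y$ is the iterate of $\delta_0$-type face maps inserting the diagonal of $T$, and using the cartesian condition (which already identifies $\mls P_n$ with $R\phi_{n\ast}\mls P_0$), matching all the pieces reduces to the already-established $i_\ast L(\mls P)\simeq \mls P_0$.

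The main obstacle will be the first step: rigorously upgrading the abstract cartesian iso $\alpha$ (plus coherences) to a genuine factorization of $\mls P_0$ through $i_\ast$ in the ordinary (not $\infty$-categorical) derived category.  In the $\infty$-categorical setup of \cite{BZFN} this is formal from the tensor-product description $D(X\times_T Y)\simeq D(X)\otimes_{D(T)}D(Y)$; classically one must argue by hand, invoking flatness of $f$ for base-change control of support and a cosimplicial variant of the BBD gluing lemma from \cite{BO}, as flagged in the paper's overview.
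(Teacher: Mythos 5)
Your overall strategy — constructing a quasi-inverse to $\epsilon_*$ — is the right idea in outline, but the mechanism you propose is not the one the paper uses, and I think it conceals the genuine difficulty. The paper's quasi-inverse is simply the derived left adjoint $\mathbf{L}\epsilon^*$ (pullback along the closed immersion $X\times_T Y\hookrightarrow X\times_S Y$), and the content of the proof is verifying that the unit and counit of this adjunction are isomorphisms. This is carried out by reducing to the affine case $X=\Spec A$, $Y=\Spec B$, $T=\Spec R$ over $S=\Spec k$, where the cosimplicial scheme corresponds to the bar-type simplicial ring $\mls A_\bullet$ with $\mls A_n=A\otimes_k R^{\otimes n}\otimes_k B$, and then showing by an explicit contracting homotopy that the augmentation $\mls A_\bullet\to A\otimes_R B$ is a quasi-isomorphism. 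The flatness of one of $f$ or $g$ enters at precisely this point: it is needed so that the comparison $\mls A_\bullet\simeq \mls A'_\bullet\otimes_{R\otimes R}(A\otimes B)\to R\otimes^{\mathbf L}_{R\otimes R}(A\otimes B)$ lands on $A\otimes_R B$ rather than a derived replacement of it.

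The specific gaps in your proposal: first, you misattribute the role of flatness. You say it gives ``base-change control of support,'' but that is not where flatness is used — it is used to identify a derived tensor product with an ordinary one in the affine reduction. Second, and more seriously, you claim that the cartesian isomorphism $\alpha$ ``can be translated into the statement that the scheme-theoretic support of $\mls P_0$ is contained in $i(X\times_T Y)$,'' and that the higher coherences then ``rigidify this support condition into an actual descent.'' But containment of set-theoretic support in a closed subscheme is emphatically not enough to conclude that a complex is pushed forward from that subscheme — the introduction of the paper makes exactly this point, citing the result of Rizzardo that such descent is rare even for affine morphisms. The hard step is precisely the one you are waving at, and the paper does not address it by a support-plus-gluing argument but by the affine bar-resolution computation. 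Third, you invoke the BBD gluing of \cite{BO}: in the paper that result is used later, in the proof of Theorem \ref{T:1.1}, to extend a pair $(P,\varphi)$ to an object of $D^-_\qcoh(E(f,g)^\bullet)$; it is not what proves Theorem \ref{T:adjunction}, which is a statement about a full cosimplicial object already in hand.

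Finally, a small point: the theorem is already symmetric in $f$ and $g$ as stated, so ``by symmetry assume $f$ flat'' is fine, but the symmetry is not needed to get started — the adjoint-pair argument treats $f$ and $g$ on an equal footing and the flatness only appears once, in the affine lemma.
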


\begin{remark} It seems likely that one can formulate a version of this result
without the flatness assumption replacing $X\times _TY$ by a suitable derived
fiber product.
\end{remark}

The proof will be in several steps.

\begin{pg}
The functor
$$
\epsilon _*:\mathrm{Mod}(X\times _TY)\rightarrow \mathrm{Mod}(E(f, g)^\bullet )
$$
has a left adjoint given by pullback along the canonical inclusion $X\times_T Y\to X\times _SY$.  Deriving this left adjoint we get a functor
$$
\mathbf{L}\epsilon ^*:D^-_\qcoh (E(f, g)^\bullet )\rightarrow D^-_\qcoh (X\times _TY).
$$
More concretely, the functor $\mathbf{L}\epsilon ^*$ is calculated by 
$$
\mathbf{L}\epsilon ^*\mls P = \mathrm{hocolim}_{\Delta }\mathbf{L}\epsilon _n^*\mls P_n.
$$
We show that the functors
$$
\mathbf{L}\epsilon ^*\epsilon _*:D^-_\qcoh (X\times _TY)\rightarrow D^-_\qcoh (X\times _TY)
$$
and
$$
\epsilon _*\mathbf{L}\epsilon ^*:D^-_\qcoh  (E(f, g)^\bullet )\rightarrow D^-_\qcoh  (E(f, g)^\bullet )
$$
are isomorphic to the respective identity functors by the adjunction maps. Note that the functors and natural transformations are all compatible with restriction to open subschemes. In particular, we can check whether the adjunction maps are isomorphisms locally on $X\times _SY$.
\end{pg}

\begin{pg}
We reduce to the case when $X$, $Y$, $S$, and $T$ are all affine as follows.

The maps $\delta _0, \delta _1:X\times _SY\rightarrow X\times _ST\times _SY$ are closed immersions, since $f$ and $g$ are separated.  For $\mls P\in D_{\mathrm{qcoh}}^-(E(f, g)^\bullet )$ we have
\begin{equation}\label{E:1.8.1}
\delta _0(\mathrm{Supp}(\mls P_0)) = \mathrm{Supp}(\mls P_1) = \delta _1(\mathrm{Supp}(\mls P_0)).
\end{equation}
Now observe that if $U\subset T$ is an open subset then 
$$
\delta _1^{-1}(\delta _0(f^{-1}(U)\times _SY)) = f^{-1}(U)\times _Sg^{-1}(U).
$$
Combining this with \eqref{E:1.8.1} we find that if $T = \cup _iT_i$ is an open covering of $T$ and if 
$$
f_i:X_i\rightarrow T_i, \ \ g_i:Y_i\rightarrow T_i
$$
are the restrictions of $f$ and $g$ then $\mls P$ is supported on 
$$
\cup _iE(f_i, g_i)^\bullet \subset E(f, g)^\bullet ,
$$
and  it suffices to verify that our adjunction maps are isomorphisms for $D(f_i, g_i)^\bullet $.  We may therefore assume that $T$ is affine, say $T=\Spec R$, and that the map $T\rightarrow S$ factors through an affine open subset $\Sp (k)\subset S$ for a ring $k$.  Replacing $S$ by $\Sp (k)$ we are reduced to the case when $S$ and $T$ are affine.

Having made this reduction, we can then cover $X$ and $Y$ by affines and verify that the adjunction maps are isomorphisms over corresponding open subsets.

We may therefore assume that $X = \Sp (A)$ and $Y = \Sp (B)$ for $R$-algebras $A$ and $B$.
\end{pg}

\begin{pg}
In this case $E(f, g)^\bullet $ is given by the simplicial ring $\mls A_\bullet $ (with tensor products taken over $k$)
$$
\xymatrix{
\cdots A\otimes R\otimes R\otimes B\ar@<1ex>[r]\ar[r]\ar@<-1ex>[r]&A\otimes R\otimes B\ar@<1ex>[r]\ar@<-1ex>[r]\ar@<.5ex>[l]\ar@<-.5ex>[l]& A\otimes B.\ar[l]}
$$
\end{pg}

\begin{lem} The augmentation $\mls A_\bullet \rightarrow A\otimes _RB$ induces a quasi-isomorphism on the associated normalized complexes.
\end{lem}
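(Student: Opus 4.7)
The plan is to identify $\mls A_\bullet $ with the two-sided bar construction of $A$ and $B$ over $R$.  Writing out the ring maps induced by the cosimplicial maps $\delta _i$ of $E(f, g)^\bullet $: $\delta _0^*$ multiplies the leftmost tensor $R$-factor into $A$ via the $R$-algebra structure, $\delta _n^*$ multiplies the rightmost $R$-factor into $B$, and the middle $\delta _i^*$ multiply adjacent $R$-factors $r_ir_{i+1}$ via pullback along the diagonal $\Delta _T$.  These are exactly the face maps of the classical two-sided bar construction $B_\bullet (A, R, B)$ with $B_n(A, R, B) = A\otimes _k R^{\otimes _k n}\otimes _k B$, and the augmentation $\mls A_0 = A\otimes _kB\to A\otimes _RB$ is multiplication.

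Once this identification is in hand, the lemma becomes the classical statement that the normalized chain complex of $B_\bullet (A, R, B)$ computes $\mathrm{Tor}^R_*(A, B)$.  I would prove it by reducing to the one-sided bar: consider $B_\bullet (A, R, R)$, augmented to $A$ by multiplication, where each $B_n(A, R, R) = A\otimes _kR^{\otimes _k n}\otimes _kR$ is free as a right $R$-module via the rightmost factor.  The $k$-linear map
\[
s(a\otimes r_1\otimes \cdots \otimes r_n\otimes r) = a\otimes r_1\otimes \cdots \otimes r_n\otimes r\otimes 1,
\]
which appends $1$ on the right, satisfies $d_is = sd_i$ for $0\leq i\leq n$ and $d_{n+1}s = \mathrm{id}$, giving a contracting homotopy on the augmented normalized complex.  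Thus $NB_\bullet (A, R, R)\to A$ is a resolution of $A$ by free right $R$-modules.

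To conclude, I would tensor over $R$ with $B$.  Since $B_\bullet (A, R, R)\otimes _RB \simeq \mls A_\bullet $ degreewise and normalization commutes with the degreewise tensor, we obtain $N\mls A_\bullet \simeq A\otimes _R^{\mathbf L}B$.  Under the hypothesis of \ref{T:adjunction} that $B$ is flat over $R$ (the case $A$ flat over $R$ is symmetric, using the left bar $B_\bullet (R, R, B)$ resolving $B$), the derived tensor collapses to $A\otimes _RB$, giving the desired quasi-isomorphism.  The main obstacle---such as it is---is purely bookkeeping: matching the cosimplicial face maps of $E(f, g)^\bullet $ with the bar face maps and verifying the simplicial identities for $s$.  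No input beyond this standard homological algebra computation is required.
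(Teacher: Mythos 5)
Your identification of $\mls A_\bullet$ with the two-sided bar construction $B_\bullet(A, R, B)$, and the use of the append-$1$ extra degeneracy to contract $B_\bullet(A, R, R)\to A$, give a correct proof, but the route is genuinely different from the paper's. The paper reduces instead to the case $A = B = R$: writing $\mls A'_\bullet$ for $B_\bullet(R, R, R)$, it contracts $\mls A'_\bullet\to R$ by the same append-$1$ homotopy, observes that $\mls A'_\bullet$ is term-wise flat over $R\otimes_k R$ (here using flatness of $R/k$, i.e.\ of $T/S$), and then identifies $N\mls A_\bullet = N\mls A'_\bullet\otimes_{R\otimes_k R}(A\otimes_k B)$ with $R\otimes^{\mathbf{L}}_{R\otimes_k R}(A\otimes_k B)\simeq A\otimes_R B$, invoking flatness of one of $f$ or $g$ for the last step. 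So the computational engine is the same in both arguments; you resolve $A$ one-sidedly over $R$, while the paper resolves $R$ symmetrically over $R\otimes_k R$.

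One imprecision worth flagging: you assert that each $B_n(A, R, R) = A\otimes_k R^{\otimes n}\otimes_k R$ is free as a right $R$-module and use this to read off $N\mls A_\bullet \simeq A\otimes^{\mathbf{L}}_R B$. But in the paper's reduction $k$ is merely a ring (an affine open of $S$), and $A$ is not assumed flat over $k$; so $A\otimes_k R^{\otimes n}$ need not be $k$-flat, and $(A\otimes_k R^{\otimes n})\otimes_k R$ need not be $R$-flat, let alone free. Fortunately the detour through $A\otimes^{\mathbf{L}}_R B$ is unnecessary: your $k$-linear contraction already shows that $B_\bullet(A, R, R)\to A$ is a quasi-isomorphism, and since $B$ is flat over $R$ the functor $-\otimes_R B$ is exact and thus preserves quasi-isomorphisms, giving directly that $\mls A_\bullet = B_\bullet(A, R, R)\otimes_R B\to A\otimes_R B$ is a quasi-isomorphism; the case $A/R$ flat is symmetric via the left bar. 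With this fix your argument goes through cleanly, and in fact it never needs the standing flatness hypothesis on $T/S$.
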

\begin{proof}
Let $\mls A'_\bullet \rightarrow R$ be the simplicial ring with augmentation obtained from the above construction taking $A = B = R$.  

Since  the maps
$$
\delta _i^*:A\otimes R^{\otimes n}\otimes B\rightarrow A\otimes R^{\otimes (n-1)}\otimes B
$$
are $A\otimes B$-linear,  the normalized complex of $\mls A_\bullet $ is isomorphic to the complex obtained from the normalized complex of $\mls A'_\bullet $ tensored over $R\otimes R$ with $A\otimes B$.  Note also that $\mls A'_\bullet $ is term-wise flat over $R\otimes R$.  It follows that if we show that $\mls A'_\bullet \rightarrow R$ induces a quasi-isomorphism on associated normalized complexes, then the map
$$
\mls A_\bullet \simeq \mls A'_{\bullet }\otimes _{R\otimes R}(A\otimes B)\rightarrow R\otimes ^{\mathbf{L}}_{R\otimes R}(A\otimes B)\simeq A\otimes _RB
$$
also induces a quasi-isomorphism (using the flatness of one of $f$ or $g$).  We are therefore reduced to the case when $A = B = R$.

In this case it follows from direct calculation that the maps
$$
h_n:R^{\otimes (n+2)}\rightarrow R^{\otimes (n+3)}, \ \ a_0\otimes \cdots \otimes a_{n+1}\mapsto a_0\otimes \cdots \otimes a_{n+1}\otimes 1
$$
define a homotopy between the identity and $0$.
\end{proof}

\begin{pg} From this and \cite[I, 3.3.4.6]{Illusie} we see that if $P\in D^-(A\otimes _RB)$ then the adjunction map
$$
\mathbf{L}\epsilon ^*\epsilon _*P = (A\otimes _RB)\otimes _{\mls A_\bullet }^{\mathbf{L}}P\rightarrow P
$$
is an equivalence.

To verify that the adjunction
\begin{equation}\label{E:1.10.1}
P\rightarrow \epsilon _*\mathbf{L}\epsilon ^*P
\end{equation}
is an equivalence for $P\in D_\qcoh ^-(E(f, g )^\bullet )$, note that since all the transition maps in $E(f, g)^\bullet $ are affine, and therefore have exact pushforwards, and $\epsilon ^*$ is right exact we get by descending induction that it suffices to consider the case when $P$ is level-wise a module concentrated in degree $0$.  In this case our assumptions imply  that $P$ is, in fact, of the form $\epsilon _*P_0$ for a $A\otimes _RB$-module $P_0$ and we have
$$
\epsilon _*\mathbf{L}\epsilon ^*\epsilon _*P_0\simeq \epsilon _*P_0
$$
by the case already considered. This isomorphism identifies \eqref{E:1.10.1} with the identity map (in the derived category), and therefore \eqref{E:1.10.1} is an isomorphism. This completes the proof of \ref{T:adjunction}. \qed
\end{pg}

\begin{example}  
Let $X = \Sp (R)$ be an affine scheme over a field $k$ and let $f\in R$ be an element defining an effective Cartier divisor $Z\hookrightarrow X$.  Let 
$$
F:X\rightarrow \mathbf{A}^1_k
$$
be the morphism defined by $f$ so that $F^{-1}(0) = Z$.  We can then apply our setup with $T = \mathbf{A}^1_k$, $Y = \Sp (k)$, and $g$ the zero section $\Sp (k)\hookrightarrow \mathbf{A}^1_k$.  The two maps
$$
\delta _0, \delta _1:X = X\times Y\rightarrow X\times T\times Y  = X\times \mathbf{A}^1_k
$$
are then given by 
$$
\delta _0 = (\mathrm{id}_X, F),  \ \ \delta _1 = (\mathrm{id}_X, 0);
$$
that is, $\delta _0 =\gamma _F$ is the graph of $F$ and $\delta _1 = \gamma _0$ is the graph of the zero map.
\end{example}

\subsection{Proof of \ref{T:1.1}}
\cref{T:adjunction} reframes the problem of descending a complex $P\in D(X\times _S Y)$ to $D(X\times _TY)$ to one of extending $P$ to an object of $D(E(f, g)^\bullet )$, the derived category of the cosimplicial scheme $E(f, g)^\bullet $.  This is, fundamentally, a problem of gluing objects of the derived category (though not in the classical setting of a covering in a site but instead in the setting of gluing objects in a cosimplicial topos) and we apply the results of \cite{BO}.

\begin{pg}
Consider again the setup of \ref{P:setup}. Let $P\in D^b_{\mathrm{qcoh}}(X\times _S Y)$ be a complex equipped with an isomorphism
$$
\varphi :\delta _{0*}P\simto \delta _{1*}P
$$
on $X\times _ST\times _SY$ such that the diagram
\begin{equation}\label{E:2.1.1}
\xymatrix{
t_{2*}P\ar[r]^-{\simeq }\ar[d]^-{\simeq }&  \delta _{0*}\delta _{0*}P\ar[r]^-{\delta _{0*}\varphi }& \delta _{0*}\delta _{1*}P=\delta _{2*}\delta _{0*}P\ar[r]^-{\delta _{2*}\varphi }& \delta _{2*}\delta _{1*}P \ar[r]^-{\simeq }& t_{0*}P\ar[d]^-{\simeq }\\
\delta _{1*}\delta _{0*}P\ar[rrrr]^-{\delta _{1*}\varphi }&&&& \delta _{1*}\delta _{1*}P}
\end{equation}
commutes, 
where for $0\leq i\leq 2 $ we write $t_i:[0]\rightarrow [2]$ for the unique map with image $i$.  We show that under the assumptions of \ref{T:1.1} there exists a unique pair $(P_0, \lambda )$, where 
$P_0\in D^b_{\mathrm{qcoh}}(X\times _TY)$ is a complex and $\lambda :\epsilon _{0*}P_0\simeq P$ is an isomorphism identifying $\varphi $ with the canonical isomorphism between the pushforwards of $P_0$.
\end{pg}
\begin{pg}
  Define $P_n\in D^b_{\mathrm{qcoh}}(E(f, g)^n)$ to be the pushforward of $P$ along the morphism $X\times _SY\rightarrow E(f, g)^n$ given by the map $[0]\rightarrow [n]$ sending $0$ to $0$.

For $m\geq 0$ let $\gamma _i:[0]\rightarrow [m]$ ($0\leq i\leq m$) be the morphism in $\Delta $ sending $0$ to $i$.
For a morphism $[n]\rightarrow [m]$ in $\Delta $ let 
$$
\alpha _\delta :[1]\rightarrow [m]
$$
be the map sending $0$ to $0$ and $1$ to $\delta (0)$.  So we have
$$
\gamma _0 = \alpha _\delta \circ \delta _0, \ \ \gamma _{\delta (0)} = \alpha _\delta \circ \delta _1.
$$
We then get an isomorphism
$$
\xymatrix{
\varphi _\delta :P_m = Rf_{\alpha _\delta *}\delta _{0*}P\ar[r]^-{\varphi }&Rf_{\alpha _\delta *}\delta _{1*}P\simeq Rf_{\delta *}P_n.}
$$
The maps $\varphi _\delta $ are compatible with composition.  For maps
$$
\xymatrix{
[n]\ar[r]^-\delta & [m]\ar[r]^-\epsilon & [k]}
$$
set
$$
g:[2]\rightarrow [k], \ \ 0\mapsto 0, 1\mapsto \epsilon (0), 2\mapsto \epsilon \delta (0).
$$
Then applying $Rf_{g*}$ to the diagram \eqref{E:2.1.1} gives that
$$
Rf_{\epsilon *}\varphi _\delta \circ \varphi _\epsilon  = \varphi _{\epsilon \delta }.
$$

\end{pg}

\begin{lem} Let $\delta :[n]\rightarrow [m]$ and $\epsilon :[t]\rightarrow [m]$ be  morphisms in $\Delta $, with associated morphisms $f_\delta :E(f, g)^n\rightarrow E(f,g )^m$ and $f_\epsilon :E(E, g)^t\rightarrow E(f, g)^m$.   Then
$$
\mathrm{Ext}^i_{E(f, g)^m}(Rf_{\epsilon *}P_t, Rf_{\delta *}P_n) = 0, \ \ \text{for $i<0$.}
$$
\end{lem}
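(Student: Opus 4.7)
The plan is to use the isomorphism $\varphi$ (through its iterates $\varphi_\delta,\varphi_\epsilon$) to reduce the $\mathrm{Ext}$ calculation to one on $X\times_S Y$, and then to extract the vanishing from the standing hypothesis $\mathrm{pr}_{1*}\mls RHom(P,P)\in D^{\geq 0}(X)$.  For definiteness, assume $g$ is flat (the other case is analogous).  Since $\varphi_\delta:P_m\simto Rf_{\delta*}P_n$ and $\varphi_\epsilon:P_m\simto Rf_{\epsilon*}P_t$ are isomorphisms (being built from $\varphi$), we have
$$
\mathrm{Ext}^i_{E(f,g)^m}(Rf_{\epsilon*}P_t,\,Rf_{\delta*}P_n)\simeq \mathrm{Ext}^i_{E(f,g)^m}(P_m,P_m).
$$
Applying $\varphi$ once more, $\varphi_{\gamma_m}:P_m\simto (f_{\gamma_m})_*P$ for $\gamma_m:[0]\to[m]$, $0\mapsto m$, identifies this with $\mathrm{Ext}^i_{E(f,g)^m}((f_{\gamma_m})_*P,(f_{\gamma_m})_*P)$, where $f_{\gamma_m}:X\times_S Y\to E(f,g)^m$ is the closed immersion $(x,y)\mapsto(x,f(x),\ldots,f(x),y)$.

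To compute this, I would invoke flat base change on the Cartesian square
$$
\xymatrix{X\times_S Y\ar[r]^-{f_{\gamma_m}}\ar[d]_{\mathrm{pr}_X}&E(f,g)^m\ar[d]^{q}\\X\ar[r]^-{\gamma_f^m}&X\times_S T^{\times m}}
$$
in which $q$ forgets the $Y$-coordinate and is flat (because $Y/S$ is flat, as the composition of $g$ and $T\to S$).  Together with the equality $q\circ f_{\gamma_m}=\gamma_f^m\circ\mathrm{pr}_X$, this yields
$$
Lf_{\gamma_m}^*(f_{\gamma_m})_*\mls O_{X\times_S Y}\simeq L\mathrm{pr}_X^*K_X,\qquad K_X:=L(\gamma_f^m)^*(\gamma_f^m)_*\mls O_X\in D^{\leq 0}(X).
$$
Because $f_{\gamma_m}$ is a section of the flat retraction $\pi:E(f,g)^m\to X\times_S Y$ (itself a base change of $T^{\times m}/S$), the projection formula combined with $\pi\circ f_{\gamma_m}=\mathrm{id}$ gives $(f_{\gamma_m})_*P\simeq L\pi^*P\otimes^{\mathbf L}(f_{\gamma_m})_*\mls O_{X\times_S Y}$, and consequently
$$
Lf_{\gamma_m}^*(f_{\gamma_m})_*P\simeq P\otimes^{\mathbf L}L\mathrm{pr}_X^*K_X.
$$

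Finally, chaining the adjunctions $Lf_{\gamma_m}^*\dashv(f_{\gamma_m})_*$ and $L\mathrm{pr}_X^*\dashv R\mathrm{pr}_{X*}$ with tensor--Hom adjunction (for which $P$ being perfect is used) produces
$$
\mathrm{Ext}^i_{E(f,g)^m}\bigl((f_{\gamma_m})_*P,(f_{\gamma_m})_*P\bigr)\simeq \mathrm{Ext}^i_X\bigl(K_X,\,R\mathrm{pr}_{X*}\mls RHom(P,P)\bigr),
$$
which vanishes for $i<0$ by the $t$-structure axioms, since $K_X\in D^{\leq 0}(X)$ while $R\mathrm{pr}_{X*}\mls RHom(P,P)\in D^{\geq 0}(X)$ by hypothesis.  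The main technical point is justifying the projection-formula identity $(f_{\gamma_m})_*P\simeq L\pi^*P\otimes^{\mathbf L}(f_{\gamma_m})_*\mls O$; this rests on the flatness of the retraction $\pi$.
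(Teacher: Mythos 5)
Your proof is correct and follows essentially the same route as the paper: reduce via the cocycle isomorphisms to $\mathrm{Ext}^i$ of the pushforward of $P$ along a single degeneracy map, use flat base change and the projection formula to express $Lf_{\gamma}^*(f_{\gamma})_*P$ in terms of $P$ and the Koszul-type complex $K_X=\mathbf{L}t^*t_*\mls O_X$ pulled back from $X$, and then invoke the hypothesis on $R\mathrm{pr}_{1*}\mls RHom(P,P)$. The only cosmetic difference is that the paper dualizes $K_X$ to $\mls R=\mls RHom(\mathbf{L}t^*t_*\mls O_X,\mls O_X)\in D^{\geq 0}$ and argues that $\mls R\otimes^{\mathbf L}R\mathrm{pr}_{1*}\mls RHom(P,P)$ remains in $D^{\geq 0}$, whereas you keep $K_X\in D^{\leq 0}$ and appeal directly to the $t$-structure vanishing $\Hom(D^{\leq 0},D^{\geq 1})=0$; since $\Ext^i(K_X,M)\simeq H^i(K_X^\vee\otimes^{\mathbf L}M)$ for $K_X$ perfect, these are literally the same calculation, and your phrasing is marginally cleaner in that it avoids the explicit ``locally represented by projectives in degrees $\geq 0$'' step.
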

\begin{proof}
We have $Rf_{\delta *}P_n\simeq P_m\simeq Rf_{\epsilon *}P_t$, so it suffices to show that
$$
\mathrm{Ext}^s_{E(f, g)^m}(\gamma _{0*}P, \gamma _{0*}P) = 0
$$
for $s<0$.

Let
$$
t:X\rightarrow X\times _ST^m
$$
be the map given on scheme-valued points by
$$
x\mapsto (x, f(x), \dots, f(x)).
$$
Then $\gamma _0$ is obtained by taking the product of $t$ with $Y$.  By adjunction, and using the fact that $P$ is perfect,  we have
$$
\mathrm{Ext}^s_{E(f, g)^m}(\gamma _{0*}P, \gamma _{0*}P)\simeq H^s(X\times Y, \mls RHom(\mathbf{L}\gamma _{0}^*\gamma _{0*}\mls O_{X\times Y}, \mls O_{X\times Y})\otimes \mls RHom (P, P)).
$$
Let $\mls R$ denote the complex
$$
\mls RHom (\mathbf{L}t^*t_*\mls O_{X}, \mls O_X).
$$
Then we have
$$
\mathrm{pr}_1^*\mls R\simeq \mls RHom(\mathbf{L}\gamma _{0}^*\gamma _{0*}\mls O_{X\times Y}, \mls O_{X\times Y}).
$$
We conclude that
\begin{equation}\label{E:pushforward}
\mls R\otimes ^{\mathbf{L}}R\mathrm{pr}_{1*}\mls RHom (P, P)\simeq R\mathrm{pr}_{1*}(\mls RHom(\mathbf{L}\gamma _{0}^*\gamma _{0*}\mls O_{X\times Y}, \mls O_{X\times Y})\otimes \mls RHom (P, P)).
\end{equation}
Since $\mls R$ is locally represented by a complex of projective $\mls O_X$-modules concentrated in degrees $\geq 0$ and $R\mathrm{pr}_{1*}\mls RHom (P, P)$ is concentrated in degrees $\geq 0$ the complex \eqref{E:pushforward} is in $D^{\geq 0}(X)$.  In particular, its cohomology is zero in negative degrees.
\end{proof}

\cref{T:1.1} now follows from the lemma and the BBD gluing lemma for a $D$-topos \cite[1.4]{BO}. \qed

\section{Canonical fibration}\label{S:sec3}

\begin{pg}
For a smooth projective variety $X$ over a field $k$ let 
$$
R_X:= \oplus _{n\geq 0}\Gamma (X, K_X^{\otimes n})
$$
be its canonical ring.  Over fields of characteristic $0$ this ring is known to be finitely generated \cite{BCHM}.

For an integer $n\geq 1$ for which $H_{X, n}:= \Gamma (X, K_X^{\otimes n})$ is nonzero we get a rational map
$$
\pi _n:\xymatrix{X\ar@{-->}[r]& \mathbf{P}H_{X, n}}.
$$
We then get open subsets
$$
U_{X, n}\subset W_{X, n}\subset X,
$$
where $U_{X, n}$ is the maximal open subset over which $H_{X, n}$ generates $K_X^{\otimes n}$ and $W_{X, n}$ is the maximal open subset over which $\pi _n$ is a morphism.  Since $X$ is normal the complement of $W_{X, n}$ in $X$ has codimension $\geq 2$ and the invertible sheaf $\pi _n^*\mls O_{\mathbf{P}H_{X, n}}(1)$ extends uniquely to an invertible subsheaf
$$
\pi _n^*\mls O_{\mathbf{P}H_{X, n}}(1)\hookrightarrow K_X^{\otimes n}
$$
over all of $X$ for which there is a map
$$
H_{X, n}\rightarrow \Gamma (X, \pi _n^*\mls O_{\mathbf{P}H_{X, n}}(1))
$$
whose image generates $\pi _n^*\mls O_{\mathbf{P}H_{X, n}}(1)$ over $W_{X, n}$.

For an open subset $A\subset \mathbf{P}H_{X, n}$ we write 
$U_{X, n, A}$ (resp. $W_{X, n, A}$)
for the preimage of $A$ in $U_{X, n}$ (resp. $W_{X, n}$).
\end{pg}

\begin{pg}
If $X$ and $Y$ are two smooth projective varieties over $k$ related by a derived equivalence 
$$
\Phi :D(X)\rightarrow D(Y)
$$
then $\Phi $ induces an isomorphism 
\begin{equation}\label{E:3.2.1}
H_{X, n}\simeq H_{Y, n}
\end{equation}
for all $n$; in particular, $\Phi $ induces an isomorphism of canonical rings $R_X\simeq R_Y$.  This is due to Bondal and Orlov \cite{bondalorlov01}.

Let us recall the argument.  For an integer $n\in \mathbf{Z}$ define a functor
$$
S_n:D(X)\rightarrow D(X), \ \ \mls F\mapsto \mls F\otimes K_X^{\otimes n}.
$$
Define $\mls S_{K_X}$ to be the category whose objects are the functors $S_n$ and for which the morphisms $S_m\rightarrow S_n$ are given by elements of $H^0(X, K_X^{\otimes (n-m)})$.  So $\mls S_{K_X}$ is a subcategory of the category $\mathrm{End}(D(X))$ of endofunctors of $D(X)$.
\end{pg}

\begin{lem}\label{P:3.0.8} Let $X$ and $Y$ be smooth projective varieties over 
a field $k$ and let $\Phi :\D(X)\rightarrow \D(Y)$ be an equivalence of 
triangulated categories.  Then the induced functor
\begin{equation}\label{E:3.8.1}
\mathrm{End}(\D(X))\rightarrow \mathrm{End}(\D(Y)), \ \ F\mapsto \Phi \circ 
F\circ 
\Phi ^{-1}
\end{equation}
sends $\mls S_{K _X}$ to $\mls S_{K _Y}$.
\end{lem}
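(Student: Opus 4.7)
The strategy is to reduce the statement to the Bondal--Kapranov theorem that derived equivalences commute with Serre functors, which intertwines tensoring with $K_X$ and tensoring with $K_Y$ up to shift.

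First I would recall that for a smooth projective $k$-variety $X$ of dimension $d_X$ the Serre functor on $\D(X)$ is
$$
\Sigma_X : \D(X) \to \D(X), \qquad \Sigma_X(\mls F) = \mls F \otimes K_X [d_X],
$$
and that any $k$-linear equivalence of triangulated categories $\Phi :\D(X)\rightarrow \D(Y)$ satisfies $\Phi \circ \Sigma_X \simeq \Sigma_Y\circ \Phi$ as functors; equivalently, $\Phi \circ \Sigma_X\circ \Phi^{-1}\simeq \Sigma_Y$ in $\End(\D(Y))$. I would then verify $d_X = d_Y$: pick a closed point $y\in Y$, consider $\Phi^{-1}(k(y))\in \D(X)$, and compare the effect of $\Sigma_X$ on this object (which, since $\Phi^{-1}(k(y))$ is a point-like/spherical-type object detected by its $\Hom$-dimensions) with that of $\Sigma_Y$ on $k(y) = \Sigma_Y(k(y))[-d_Y]$; the required shift forces $d_X = d_Y$. (Alternatively, cite the standard consequence of derived equivalence that dimensions agree.)

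Having $d := d_X = d_Y$, I would note that $S_n = \Sigma_X^n [-nd]$ on $\D(X)$ and similarly $S_n^Y = \Sigma_Y^n [-nd]$ on $\D(Y)$. Iterating the Serre intertwining identity gives
$$
\Phi\circ S_n\circ \Phi^{-1} \;\simeq\; \Phi \circ \Sigma_X^n\circ \Phi^{-1}\,[-nd] \;\simeq\; \Sigma_Y^n [-nd] \;=\; S_n^Y,
$$
so the functor \eqref{E:3.8.1} sends each object of $\mls S_{K_X}$ to an object of $\mls S_{K_Y}$.

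For the morphism part, I would check that the $\Hom$-sets in $\mls S_{K_X}$ coincide with the full $\Hom$-sets between the $S_m, S_n$ inside $\End(\D(X))$. Concretely, a natural transformation $\alpha : S_m\to S_n$ is determined by its component $\alpha_{\mls O_X} : K_X^{\otimes m}\to K_X^{\otimes n}$ (by naturality in the morphisms $\mls O_X\to \mls F$ and the fact that $\D(X)$ is generated, as a triangulated category tensor-acting on itself, by $\mls O_X$), and any such morphism in $\D(X)$ is an element of $\Hom_{\D(X)}(K_X^{\otimes m}, K_X^{\otimes n}) = H^0(X, K_X^{\otimes (n-m)})$; the inverse construction, tensoring a global section of $K_X^{\otimes (n-m)}$, recovers $\alpha$. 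The same holds for $Y$. Since conjugation by $\Phi$ is an equivalence of the endofunctor categories, it is fully faithful on these $\Hom$ sets, and hence sends the morphisms in $\mls S_{K_X}$ bijectively to the morphisms in $\mls S_{K_Y}$. Combined with the previous step this gives the desired statement.

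The main obstacle is the dimension equality $d_X = d_Y$; everything else is a formal consequence of the Serre-functor intertwining and a Yoneda-style identification of natural transformations with sections of powers of the canonical bundle.
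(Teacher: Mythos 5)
Your treatment of the objects of $\mls S_{K_X}$ is fine and is essentially the paper's argument: the Serre-functor intertwining $\Phi\circ\Sigma_X\simeq\Sigma_Y\circ\Phi$ together with $\dim X=\dim Y$ immediately shows that conjugation by $\Phi$ sends $S_n$ to (something isomorphic to) $S_n^Y$; this is the Bondal--Orlov statement the paper cites.

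The morphism part, however, has a genuine gap, and it is precisely the point the paper is careful about. You assert that the $\Hom$-sets in $\mls S_{K_X}$ coincide with the full $\Hom$-sets in $\End(\D(X))$, arguing that a natural transformation $\alpha:S_m\to S_n$ is determined by its component $\alpha_{\mls O_X}$ by naturality and generation. That style of argument does not go through for endofunctors of a triangulated category: a natural transformation of triangulated functors is not controlled by its value on a generator (cones are not functorial, and there is no Yoneda reduction), and indeed $\Hom_{\End(\D(X))}(S_m,S_n)$ can contain natural transformations that do not come from sections of $K_X^{\otimes(n-m)}$. The paper explicitly flags this (``Since these subcategories are not full, \dots a bit more is required'') and resolves it by passing to dg-enhancements: using Toën's result that $\widetilde S_X=\Delta_{X*}\omega_X$ is the unique dg-lift of the Serre functor, one defines $\Hom'(S_X^n,S_X^m)\subset\Hom_{\End(\D(X))}(S_X^n,S_X^m)$ to be the subset of natural transformations that lift to morphisms of dg-functors; by \cite[8.9]{toen} this $\Hom'$ is exactly $H^0(X,K_X^{\otimes(m-n)})$, and since $\widetilde\Phi\circ\widetilde S_X\circ\widetilde\Phi^{-1}$ is a dg-lift of $S_Y$, conjugation preserves the $\Hom'$ subsets. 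Without some enhancement of this kind your argument does not establish that conjugation carries the distinguished morphisms of $\mls S_{K_X}$ into the distinguished morphisms of $\mls S_{K_Y}$; it only shows that it carries the full ambient $\Hom$-sets to each other, which is automatic for any equivalence.
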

\begin{proof}
The fact that conjugation by $\Phi$ matches up the objects of the categories $\mls S_{K_X}$ and $\mls S_{K_Y}$ is due to Bondal and Orlov \cite{bondalorlov01}.
Let $P\in D(X\times Y)$ be a complex defining $\Phi $.
For an integer $n$ and $S_{X, n}\in \mathrm{End}(D(X))$ (resp. $S_{Y, n}\in \mathrm{End}(D(Y))$) given by tensoring with $K_X^{\otimes n}$ (resp. $K_Y^{\otimes n}$) we have
$\Phi \circ S_{X, n}$
given by $P\otimes p_X^*K_X^{\otimes n}$ and $S_{Y, n}\circ \Phi $ given by $P\otimes p_Y^*K_Y^{\otimes n}$, where $p_X$ and $p_Y$ are the projections.  The result therefore follows from the standard fact \cite[5.22]{huybrechtsFM} that
$$
P\otimes p_X^*K_X^{\otimes n}\simeq P\otimes p_Y^*K_Y^{\otimes n}.
$$

Since these subcategories are not full, however, a bit more is required to get the compatibility on morphisms.  
Following \cite{toen}, let $L_{perf}(X)$ (resp. $L_{perf}$) denote the 
dg-category of perfect  complexes of quasi-coherent sheaves on $X$.  The kernel 
$P$ then defines an equivalence
$$
\widetilde \Phi :L_{perf}(X)\rightarrow L_{perf}(Y).
$$
Let $S_X:\D(X)\rightarrow \D(X)$ be the Serre functor of $X$.  By the uniqueness 
part of Orlov's theorem, as well as Toën's representability result in 
\cite[8.15]{toen} the functor has a lift
$$
\widetilde S_X:L_{perf}(X)\rightarrow L_{perf}(X)
$$
which is unique up to equivalence of dg functors (in the sense of \cite{toen}). 
 
In fact, $\widetilde S_X$ is given by $\Delta _{X*}\omega _X\in 
L_{perf}(X\times 
X)$.  For integers $n$ and $m$ it therefore makes sense to consider the subspace
$$
\mathrm{Hom}'_{\mathrm{End}(\D(X))}(S_X^n, S_X^m)\subset 
\mathrm{Hom}_{\mathrm{End}(\D(X))}(S_X^n, S_X^m)
$$
of morphisms of functors $S_X^n\rightarrow S_X^m$ which admit liftings to 
morphisms of dg functors $\widetilde S_X^n\rightarrow \widetilde S_X^m$.  By 
\cite[8.9]{toen} the set $\mathrm{Hom}'_{\mathrm{End}(\D(X))}(S_X^n, S_X^m)$ 
consists precisely of those morphisms induced by sections of $K_X^{\otimes 
(m-n)}$.

Now for a lift $\widetilde S_X$ the functor
$$
\widetilde \Phi \circ \widetilde S_X\circ \widetilde \Phi 
^{-1}:L_{perf}(Y)\rightarrow L_{perf}(Y)
$$
is a dg lift of the Serre functor $S_Y$ of $Y$.  From this it follows that 
\eqref{E:3.8.1} sends
$$\mathrm{Hom}'_{\mathrm{End}(\D(X))}(S_X^n, S_X^m)$$ to 
$\mathrm{Hom}'_{\mathrm{End}(\D(Y))}(S_Y^n, S_Y^m)$ which implies the lemma.
\end{proof}

\begin{thm}\label{T:3.4}
Let $X$ and $Y$ be smooth projective varieties over $k$ and let $\Phi :D(X)\rightarrow D(Y)$ be a derived equivalence.  Let $n\geq 1$ be an integer such that $H_{X, n}$ (and therefore also $H_{Y, n}$) is nonzero.

(a)  The support of $P|_{U_{X, n}\times Y}$ (resp. $P|_{W_{X, n}\times Y}$) is contained in $U_{X, n}\times U_{Y, n}$ (resp. $W_{X, n}\times W_{Y, n}$).

(b) There exists a dense open subset $A\subset \mathbf{P}H_{X, n}$ such that $P|_{W_{X, n, A}\times Y}$ is in the image of
$$
D(W_{X, n, A}\times _AW_{Y, n, A})\rightarrow D(W_{X, n, A}\times Y).
$$
\end{thm}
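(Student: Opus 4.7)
For part (a), my plan is to leverage \cref{P:3.0.8}, which gives a Bondal--Orlov isomorphism $P \otimes p_X^*K_X^{\otimes n} \simeq P \otimes p_Y^*K_Y^{\otimes n}$ intertwining, for each $s \in H_{X,n}$ with corresponding $t \in H_{Y,n}$ under \eqref{E:3.2.1}, the multiplication morphisms $s_P : P \to P \otimes p_X^*K_X^{\otimes n}$ and $t_P : P \to P \otimes p_Y^*K_Y^{\otimes n}$. Comparing cones forces $\Supp(P) \cap p_X^{-1}(Z_s) = \Supp(P) \cap p_Y^{-1}(Z_t)$, so for $x \in U_{X,n}$ a choice of $s$ nonvanishing at $x$ forces the corresponding $t$ to be nonvanishing at every $y$ with $(x,y) \in \Supp(P)$; hence $y \in U_{Y,n}$.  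For the $W$-version, I upgrade this to an isomorphism $P \otimes p_X^*L_X \simeq P \otimes p_Y^*L_Y$ of the images of the evaluation morphisms $p_{X/Y}^*(H_{\bullet,n}\otimes \mls O) \to p_{X/Y}^*K_{\bullet}^{\otimes n}$; the same cone argument, applied now to a section $s$ generating $L_X$ at $x \in W_{X,n}$, shows $t$ generates $L_Y$ at $y$, which means $y \in W_{Y,n}$.

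For part (b), my plan is to apply \cref{T:1.1} with $S = \Spec k$, $T = A$, and $f, g$ the restrictions of $c_X, c_Y$. The refined part (a) gives $\Supp(P|_{W_{X,n,A}\times Y}) \subset W_{X,n,A}\times_A W_{Y,n,A}$: for $(x,y)$ in the support, the zero-locus matching applied to all sections shows $c_X(x)$ and $c_Y(y)$ lie in the same hyperplanes of $\mathbf{P}H_{X,n}$, hence coincide; thus $P|_{W_{X,n,A}\times Y}$ is already the pushforward of $P|_{W_{X,n,A}\times W_{Y,n,A}}$. I shrink $A$ by generic flatness so that $f$ becomes flat. Perfectness of $P$ is automatic.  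For the cohomological condition, the derived fiber of $\mathrm{pr}_{1*}\mls RHom(P,P)$ at $x \in X$ is computed via base change as $\mathrm{RHom}_Y(\Phi(\mls O_x), \Phi(\mls O_x)) \simeq \mathrm{RHom}_X(\mls O_x, \mls O_x)$, which at a smooth point is $\bigoplus_i \Lambda^i T_x X[-i]$ and hence lies in $D^{\geq 0}$; membership in $D^{\geq 0}(X)$ for the full complex follows from the fiberwise criterion.

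The main obstacle is the construction of an isomorphism $\varphi : \delta_{0*}P \simto \delta_{1*}P$ on $W_{X,n,A} \times A \times W_{Y,n,A}$ with $\mathrm{pr}_{13*}\varphi = \id_P$. My plan is to invoke the Beilinson resolution of the diagonal $\Delta_A \subset A\times A$, which expresses $\mls O_{\Delta_A}$ as a bounded complex of sheaves $\mls O_A(-i)\boxtimes\Omega^i_A(i)$ augmented by $\mls O_A\boxtimes\mls O_A$. Pulling back this resolution to $W_{X,n,A} \times A \times W_{Y,n,A}$ along the appropriate projections and tensoring with the relevant pushforwards of $P$, each term involves a twist $P \otimes p_X^*L_X^{\otimes i}$ paired with a bundle on the $Y$-factor; the Bondal--Orlov identifications $P\otimes p_X^*L_X^{\otimes i}\simeq P\otimes p_Y^*L_Y^{\otimes i}$ from part (a) then assemble term by term into a morphism $\delta_{0*}P \to \delta_{1*}P$, which is an isomorphism by support considerations, while the leading term $\mls O\boxtimes\mls O$ of the augmentation delivers the trace identity $\mathrm{pr}_{13*}\varphi = \id_P$. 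After a possible further shrinking of $A$ to absorb the higher terms cleanly, \cref{T:1.1} applies to $(P|_{W_{X,n,A}\times W_{Y,n,A}}, \varphi)$ and produces the desired descent to $D(W_{X,n,A}\times_A W_{Y,n,A})$.
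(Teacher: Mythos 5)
Your treatment of part (a) for $U_{X,n}$ is sound and close in spirit to the paper's Lemma \ref{L:3.12}.  But the $W$-version hinges on an isomorphism $P\otimes p_X^*L_X\simeq P\otimes p_Y^*L_Y$ "of the images of the evaluation morphisms," which you call an upgrade of the Bondal--Orlov identity $P\otimes p_X^*K_X^{\otimes n}\simeq P\otimes p_Y^*K_Y^{\otimes n}$.  This does not follow: $L_X\subset K_X^{\otimes n}$ is a subsheaf, and after tensoring with a complex $P$ there is no "image" to speak of, and no reason the given isomorphism respects these subobjects.  Unwinding, what you need is $P\otimes p_X^*I_X\simeq P\otimes p_Y^*I_Y$ for the base ideals, which is new information.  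In the paper this is precisely the content of Proposition \ref{P:3.34}: an isomorphism $p^*\mls C_{X,n}\lotimes P\simeq q^*\mls C_{Y,n}\lotimes P$ for the full Koszul complexes $\mls C_{\bullet,n}$, constructed by an induction on stupid truncations using a vanishing statement (Lemma \ref{L:3.35}) that is itself justified through dg-enhancements and Toën's representability.  Your part (a) for $W_{X,n}$, and your part (b) which reuses those identifications, both rest on this unproven step.

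The second gap is in part (b) and is independently fatal.  You want to apply Theorem \ref{T:1.1} with $T=A$ for $A$ a dense open of $\mathbf{P}H_{X,n}$, after shrinking $A$ to make $f\colon W_{X,n,A}\to A$ flat.  But $\pi_n$ factors through the closure $\mathbf{P}$ of its image, which is typically a proper closed subvariety of $\mathbf{P}H_{X,n}$ (e.g., for $n$ large and $\kappa(X)<h^0(nK_X)-1$).  Then $W_{X,n,A}\to A$ factors through the closed immersion $A\cap\mathbf{P}\hookrightarrow A$ and is flat at \emph{no} point, so no amount of shrinking makes the hypothesis of Theorem \ref{T:1.1} hold, and for the same reason the pullback of the Beilinson resolution along $(x,a)\mapsto(f(x),a)$ does not resolve $\mls O_{\gamma_f}$.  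The paper's way around this is the crucial step you omit: choose $V\subset H_{X,n}$ so that $\mathbf{P}\to\mathbf{P}(V)$ is finite and generically \'etale, run the Beilinson argument and Theorem \ref{T:1.1} over $\mathbf{P}(V)$ (where the composed maps \emph{are} generically flat and the target \emph{is} a projective space), and then transfer the conclusion back to fibers over $\mathbf{P}$ using that $W_{X,A}\times_A W_{Y,A}$ is open and closed in $W_{X,n,B}\times_B W_{Y,n,B}$ over the \'etale locus; a separate descent step (Lemma \ref{L:4.6}) is also needed to handle finite base fields, since a suitable $V$ may not exist over a finite $k$.
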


The proof occupies the remainder of this section.

\begin{remark}
Note that the open subset $U_X\subset X$ considered in \ref{T:1.3} is the union over all $n$ of the $U_{X, n}$.  Since $U_X$ is quasi-compact, we in fact have $U_X = U_{X, n}$ for $n>>0$ and therefore \ref{T:1.3} follows from \ref{T:3.4}.
\end{remark}

\subsection{The complex $\mls C_{X, n}$.}

\begin{pg}\label{P:4.16b}
We can define a complex $\mls C_{X, n}$ on $X$ with a map of 
complexes 
$$
\epsilon _{X, n}:\mls C_{X, n}\rightarrow \pi _n^*\mls O_{\mathbf{P}H_{X, n}}(1)
$$
which restricts to a quasi-isomorphism over $W_{X, n}$.  Recall that we write $\pi _n^*\mls O_{\mathbf{P}H_{X, n}}(1)$ for the line bundle on $X$ obtained by pullback under the rational map $\pi _n$.  This complex $\mls C_{X, n}$ will be used to understand the set $W_{X, n}$.

The complex $\mls C_{X, n}$ is the Koszul complex associated to the map 
$H_{X, n}\otimes _k\mls O_X\rightarrow K_X^{\otimes n}$ (note that this map factors through $\pi _n^*\mls O_{\mathbf{P}H_{X, n}}(1)$).  Precisely, we have 
$$
\mls C^i_{X, n}:= (\wedge ^{-i+1}H_{X, n})\otimes _kK_X^{\otimes (in)}
$$
for $i\leq 0$ and $\mls C^i_{X, n} = 0$ for $i>0$.  The differential 
$$
d_i:\mls C^{i}_{X, n}\rightarrow \mls C^{i+1}_{X, n}
$$
is given by the usual formula in local coordinates
$$
d_i((h_1\wedge \cdots \wedge h_i)\otimes \ell ):= \sum 
_{j=1}^i(-1)^{j+1}(h_1\wedge \cdots \hat h_j\cdots \wedge h_i)\otimes (\rho 
(h_i)\otimes \ell ),
$$
where $\rho :H_{X, n}\otimes _k\mls O_X\rightarrow K_X$ is the natural map.  The map 
$\epsilon _{X, n}$ is defined to be the map induced by the natural map 
$H_{X, n}\otimes _k\mls O_X\rightarrow \pi _n^*\mls O_{\mathbf{P}H_{X, n}}(1).$  By 
standard properties of the Koszul complex the restriction of $\epsilon _{X, n}$ 
to $W_{X, n}$ is a quasi-isomorphism. 

If 
\begin{equation}\label{E:4.17.1}
\Sigma _n\subset \pi _n^*\mls O_{\mathbf{P}H_{X, n}}(1)
\end{equation}
is the image of $H_{X, n}$ then a point $z\in X$ lies in $W_{X, n}$ if and only if $\Sigma _{n, z}$ is generated by a single element.  Indeed if this is the case then $\Sigma _n$ is a line bundle in a neighborhood of $z$ and the inclusion \eqref{E:4.17.1} restrict to this open subset to an isomorphism, since it is an inclusion of line bundles which is an isomorphism away from a codimension $2$ subset.
\end{pg}

\begin{pg}
For integers $n<m$ and a section $\alpha \in H^0(X, K_{X}^{\otimes (m-n)})$, 
multiplication by $\alpha $ induces a map  $H_{X, n}\rightarrow H_{X, m}$.  This map 
induces a map
$$
\gamma _\alpha :\mls C_{X, n}\rightarrow \mls C_{X, m}
$$
of complexes.  Define 
$$
\mls Kos_X\subset \mathrm{End}(\D(X))
$$
to be the full subcategory whose objects are the functors $\Phi _{X, n}$ given by tensor 
product with the complexes $\mls C_{X, n}$ and whose morphisms are given by 
sections of $K_X^{\otimes r}$ as above.
\end{pg}

\begin{prop}\label{P:3.34} Let $X$ and $Y$ be smooth projective varieties over 
$k$ and let $$\Phi :\D(X)\rightarrow \D(Y)$$ be a derived equivalence given by a 
kernel $P\in \D(X\times Y)$.  Then the essential image of $\mls Kos_X$ under the 
functor
$$
\mathrm{End}(\D(X))\rightarrow \mathrm{End}(\D(Y)), \ \ F\mapsto \Phi \circ 
F\circ 
\Phi ^{-1}
$$
is equal to $\mls Kos_Y$.
\end{prop}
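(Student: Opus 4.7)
The plan is to work at the level of kernels. Since $\Phi_{X,n}$ is tensoring with $\mls C_{X,n}$, the projection formula gives, naturally in $G\in \D(Y)$,
\[
\Phi\circ \Phi_{X,n}\circ \Phi^{-1}(G)\;=\;p_{Y*}\bigl(P\otimes p_X^*\mls C_{X,n}\otimes p_X^*\Phi^{-1}(G)\bigr).
\]
Consequently, to identify this composition with $\Phi_{Y,n}=(-)\otimes \mls C_{Y,n}$, it suffices to produce a single isomorphism
\[
P\otimes p_X^*\mls C_{X,n}\;\simeq\; P\otimes p_Y^*\mls C_{Y,n}
\]
in $\D(X\times Y)$: a second application of the projection formula then converts the above expression into $\mls C_{Y,n}\otimes G = \Phi_{Y,n}(G)$ naturally in $G$, delivering the object-level half of the proposition.

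To build this complex-level isomorphism I would proceed term-by-term. Each term $(\wedge^{-i+1}H_{X,n})\otimes_k K_X^{\otimes in}$ of $\mls C_{X,n}$, after twisting by $P$ and combining the standard isomorphism $P\otimes p_X^*K_X^{\otimes in}\simeq P\otimes p_Y^*K_Y^{\otimes in}$ from \cite[5.22]{huybrechtsFM} with the vector space identification $H_{X,n}\simeq H_{Y,n}$ of \eqref{E:3.2.1}, matches the analogous term of $P\otimes p_Y^*\mls C_{Y,n}$. What remains is to verify that these identifications intertwine the Koszul differentials, which are built from the multiplication map $H_{X,n}\otimes \mls O_X\to K_X^{\otimes n}$. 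This compatibility is exactly what \ref{P:3.0.8} provides, since \ref{P:3.0.8} asserts that the \emph{morphisms} in $\mls S_{K_X}$---namely the sections of canonical powers appearing in these multiplication maps---are matched with the corresponding morphisms in $\mls S_{K_Y}$ via the ring isomorphism $\tilde\tau$.

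For morphisms in $\mls Kos_X$: a morphism $\Phi_{X,n}\to \Phi_{X,m}$ is given by a section $\alpha\in H^0(X,K_X^{\otimes(m-n)})$ inducing $\gamma_\alpha:\mls C_{X,n}\to \mls C_{X,m}$. Under the identifications constructed above, conjugation by $\Phi$ sends this to the natural transformation associated to $\tilde\tau(\alpha)\in H^0(Y,K_Y^{\otimes(m-n)})$, again by \ref{P:3.0.8}. Running the same argument with $\Phi^{-1}$ shows conversely that every object of $\mls Kos_Y$ arises by conjugation from an object of $\mls Kos_X$, so the essential images coincide.

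The main obstacle is the one embedded in the second paragraph: verifying that the term-by-term isomorphisms, which are \emph{a priori} independent non-canonical choices, can be chosen coherently so as to intertwine the Koszul differentials on $P\otimes p_X^*\mls C_{X,n}$ and $P\otimes p_Y^*\mls C_{Y,n}$. This coherence is precisely what \ref{P:3.0.8} buys us, by providing compatibility not merely of the objects of $\mls S_{K_X}$ and $\mls S_{K_Y}$ but also of the morphism-level data---the sections of canonical powers that furnish the Koszul differentials.
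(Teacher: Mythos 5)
Your reduction in the first paragraph is exactly the paper's opening move: it suffices to produce an isomorphism $p_X^*\mls C_{X,n}\lotimes P\simeq p_Y^*\mls C_{Y,n}\lotimes P$, equation \eqref{E:3.8.0}. But you have correctly located the main obstacle and then not actually overcome it. You assert that the "coherence" needed to intertwine the Koszul differentials "is precisely what \ref{P:3.0.8} buys us." What \ref{P:3.0.8} actually gives is that each \emph{individual square}
\[
\xymatrix{
p_X^*K_X^{\otimes n}\lotimes P\ar[r]^-{\alpha}\ar[d]^-{\sigma_n}& p_X^*K_X^{\otimes m}\lotimes P\ar[d]^-{\sigma_m}\\
p_Y^*K_Y^{\otimes n}\lotimes P\ar[r]^-{\tilde\tau(\alpha)}& p_Y^*K_Y^{\otimes m}\lotimes P
}
\]
commutes in $\D(X\times Y)$. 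This is necessary but not sufficient. In a triangulated category, a family of termwise isomorphisms between two complexes with commuting squares does \emph{not} assemble into a morphism (let alone an isomorphism) between the totalized objects: one cannot construct morphisms of complexes termwise in the derived category, because the terms of $\mls C_{X,n}\lotimes P$ are related only via distinguished triangles, and cones are not functorial.

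The paper's proof spends the rest of its length precisely on this gluing problem. It introduces the stupid truncations $\mls C_{X,n}^{\leq s}$ with their distinguished triangles \eqref{E:3.34.1}, and proves \cref{L:3.35}: the vanishing $\Ext^{s-j}(p_X^*\mls C_{X,n}^{\leq s}\lotimes P, q_Y^*\mls C_{Y,n}^i\lotimes P)=0$ for $j>s$, plus an injectivity statement. That vanishing is what legitimizes the inductive construction of $\lambda^{\leq s}$ via the "fill in the dotted arrow" step and, crucially, makes the filler unique, so that the commuting squares from \ref{P:3.0.8} genuinely force the construction through. Your proposal names the obstacle and then waves it away; without \cref{L:3.35} (or some equivalent Ext-vanishing argument) the term-by-term identifications do not glue, and the proof is incomplete.
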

\begin{proof}
Let $p:X\times Y\rightarrow X$ and $q:X\times Y\rightarrow Y$ be the 
projections.  The proof of \cref{P:3.0.8} implies that there exist isomorphisms
$$
\sigma _n:p^*K_X^{\otimes n}\lotimes P\simeq P\lotimes q^*K_Y^{\otimes n}
$$
in $\D(X\times Y)$, such that for any $n<m$ and $\alpha \in H^0(X, 
K_{X}^{\otimes 
(m-n)})$ the diagram
$$
\xymatrix{
p^*K_X^{\otimes n}\lotimes P\ar[r]^-\alpha \ar[d]^-{\sigma _n}& p^*K_X^{\otimes 
m}\lotimes P\ar[d]^-{\sigma _m}\\
p^*K_Y^{\otimes n}\lotimes P\ar[r]^-{\tilde \tau (\alpha )}& p^*K_Y^{\otimes 
m}\lotimes 
P}
$$
commutes, where $\tilde \tau (\alpha )$ is the image of $\alpha $ under the isomorphism \eqref{E:3.2.1}.  To prove the proposition it suffices to extend these isomorphisms to an isomorphism of complexes
\begin{equation}\label{E:3.8.0}
\lambda :p^*\mls C_{X, n}\lotimes P\simeq q^*\mls C_{Y, n}\lotimes P.
\end{equation}
Indeed if $T_{X, n}\in \mls Kos_X$ (resp. $T_{Y, n}\in \mls Kos_Y$) represents the functor defined by $\mls C_{X, n}$ (resp. $\mls C_{Y, n}$) then such an isomorphism defines an isomorphism
$$
\Phi \circ T_{X, n}\simeq T_{Y, n}\circ \Phi .
$$

For an integer $s$ define $\mls C_{X, n}^{\leq s}$ to be the complex which in 
degrees $i\leq s$ is the same as $\mls C_{X, n}$ but which has zero terms in 
degree $>s$.  There is then a distinguished triangle for each $s$
\begin{equation}\label{E:3.34.1}
\xymatrix{
\mls C_{X, n}^s[-s]\ar[r]& \mls C_{X, n}^{\leq s}\ar[r]& \mls C_{X, n}^{\leq 
s-1}\ar[r]& \mls C_{X, n}^s[-s+1].
}
\end{equation}


To prove the proposition we construct for each $s$ an isomorphism in $\D(X\times 
Y)$
$$
\lambda ^{\leq s}:p^*\mls C_{X, n}^{\leq s}\lotimes P\simeq q^*\mls C_{Y, 
n}^{\leq s}\lotimes P,
$$
such that the diagram
\begin{equation}\label{E:3.34.2}
\xymatrix{
\mls C^s_{X, n}[-s]\lotimes P\ar[rrr]^-{\wedge ^{-s+1}(\tilde \tau )\otimes \sigma 
_{ns}}\ar[d]& & &\mls C^s_{Y, n}[-s]\lotimes P\ar[d]\\
\mls C^{\leq s}_{X, n}\lotimes P\ar[rrr]^-{\lambda ^{\leq s}}& & &\mls C^{\leq 
s}_{Y, 
n}\lotimes P}
\end{equation}
commutes.

\begin{lem}\label{L:3.35}  Let $s$, $i$, and $j$ be integers with $j>s$.
\begin{enumerate}
    \item [(i)] We have 
    $$
    \mathrm{Hom}_{\D(X\times Y)}(p^*\mls C_{X, n}^{\leq s}\lotimes P, q^*\mls 
C_{Y, n}^i\lotimes P[-j]) = 0.
    $$
    \item [(ii)] The restriction map
    $$
    \mathrm{Hom}_{\D(X\times Y)}(p^*\mls C_{X, n}^{\leq s}\lotimes P, q^*\mls 
C_{Y, n}^i\lotimes P[-s])\rightarrow \mathrm{Hom}_{\D(X\times Y)}(p^*\mls C_{X, 
n}^{s }\lotimes P[-s], q^*\mls C_{Y, n}^i\lotimes P[-s])
    $$
    is injective.
\end{enumerate}
\end{lem}
\begin{proof}
By considering the distinguished triangles \eqref{E:3.34.1} the proof of (i) is 
reduced to showing that for all integers $s$, $i$, and $j>s$ we have
$$
\mathrm{Hom}_{\D(X\times Y)}(p^*\mls C_{X, n}^s\lotimes P[-s], q^*\mls C_{Y, 
n}^{i}\lotimes P[-j]) = 0.
$$
This follows from noting that elements of this group correspond to morphisms of 
functors
$$
\Phi \circ \Phi ^{\mls C_{X, n}^s[-s]}\rightarrow \Phi ^{\mls C_{Y, 
n}^i[-j]}\circ \Phi 
$$
which can be lifted to the dg-categories of complexes of coherent sheaves.  
Using the isomorphism
$$
\Phi ^{\mls C_{Y, n}^i[-j]}\circ \Phi \simeq \Phi \circ \Phi ^{\mls C_{X, 
n}^i[-j]}
$$
and applying $\Phi ^{-1}$ we see that we have to show that there are no nonzero 
morphisms of functors
$$
\Phi ^{\mls C_{X, n}^s[-s]}\rightarrow \Phi ^{\mls C_{X, n}^i[-j]}
$$
which can be lifted to the dg-category.  Here for a complex $K\in \D(X)$ we 
write 
$\Phi ^K$ for the endofunctor given by tensoring with $K$, and similarly for 
complexes on $Y$.  Equivalently, we need to show that  there are no nonzero 
morphisms in $\D(X\times X)$ 
$$
\Delta _{X*}\mls C_{X, n}^s[-s]\rightarrow \Delta _{X*}\mls C_{X, n}^i[-j],
$$
which follows from the fact that $j>s$.

Statement (ii) follows from (i) and consideration of the triangles 
\eqref{E:3.34.1}.
\end{proof}

We now construct $\lambda ^{\leq s}$ inductively.  For $s$ sufficiently 
negative 
we have $\mls C_{X, n}^{\leq s} = 0$ so there is nothing to show.  So we assume 
that $\lambda ^{\leq s}$ has been defined and construct $\lambda ^{\leq 
(s+1)}$. 
 For this consider the diagram of distinguished triangles
$$
\xymatrix{
\mls C^{s+1}_{X, n}[-(s+1)]P\ar[d]^-{\tau \otimes \sigma }\ar[r]& \mls C_{X, 
n}^{\leq (s+1)}\lotimes P\ar@{-->}[d]\ar[r]& \mls C_{X, n}^{\leq s}\lotimes 
P\ar[d]^-{\lambda _n}\ar[r]& \mls C^{s+1}_{X, n}[-s]P\ar[d]^-{\tau \otimes 
\sigma }\\
\mls C^{s+1}_{Y, n}[-(s+1)]P\ar[r]& \mls C_{Y, n}^{\leq (s+1)}\lotimes P\ar[r]& 
\mls C_{Y, n}^{\leq s}\lotimes P\ar[r]& \mls C^{s+1}_{X, n}[-s]P,}
$$
where the right-most inner square commutes by \cref{L:3.35} (ii).  Now define 
$\lambda ^{\leq (s+1)}$ to be a morphism as indicated by the dotted arrow.  
Note 
that in fact such a morphism is unique by \cref{L:3.35} (i).  This completes 
the 
proof of \cref{P:3.34}.
\end{proof}

\subsection{Proof of \ref{T:3.4} (a)}

If $x\in U_{X, n}$ then the  skyscraper sheaf $\kappa (x)\in \D(X)$ has the property that there exists an element $\alpha \in H_{X, n}$ such that 
$$
\alpha :\kappa (x)\rightarrow \kappa (x)\lotimes K_{X}^{\otimes n}
$$
is an isomorphism.  It follows that $P_x$ has the property that there exists an element $\alpha '\in H_{Y, n}$ for which the map
$$
\alpha ':P_x\rightarrow P_x\lotimes K_Y^{\otimes n}
$$
is an isomorphism. The statement for $P|_{U_{X, n}\times Y}$ follows from this and the following \ref{L:3.12}.

To get the statement for $W_{X, n}$ note that if $x\in W_{X, n}$ is a point then from the equation \eqref{E:3.8.0} we find that
$$
P_x\simeq P_x\lotimes \mls C_{Y, n}.
$$
We get the statement for $P_{W_{x, n}\times Y}$ from this and the following \ref{L:3.13}.

\begin{lem}\label{L:3.12} Let $Q\in \D(Y)$ be a complex such that there exists an element $\alpha \in H_{Y, n}$ for which the induced map
$$
\alpha :Q\rightarrow Q\lotimes K_Y^{\otimes n}
$$
is an isomorphism.  Then the support of $Q$ is contained in $U_{Y, n}$.
\end{lem}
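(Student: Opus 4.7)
The plan is to verify the support containment pointwise, by computing the derived pullback of the isomorphism $\alpha$ at a point. Let $y \in Y$ be any point with $y \notin U_{Y,n}$; I will show $\mathbf{L}\iota_y^* Q = 0$ in $D(\kappa(y))$, where $\iota_y : \mathrm{Spec}\,\kappa(y) \to Y$ is the inclusion. Derived Nakayama for bounded complexes with coherent cohomology then forces $Q_y = 0$, so $y$ is not in the support of $Q$.

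By the definition of $U_{Y,n}$ as the maximal open on which $H_{Y,n}$ generates $K_Y^{\otimes n}$, the hypothesis $y \notin U_{Y,n}$ means that every section in $H_{Y,n}$ vanishes at $y$; in particular $\alpha(y) = 0$. Working in a small affine neighborhood $U$ of $y$ over which $K_Y^{\otimes n}$ is trivial, I would represent $\alpha|_U$ by a regular function $a \in \mathfrak{m}_y \mathcal{O}_{Y,y}$. Under this trivialization, the morphism $\alpha : Q \to Q \lotimes K_Y^{\otimes n}$ identifies, on $U$, with multiplication by $a$ as an endomorphism of $Q|_U$.

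Applying $\mathbf{L}\iota_y^*$ and using the induced trivialization of $\mathbf{L}\iota_y^* K_Y^{\otimes n}$, the map $\mathbf{L}\iota_y^*\alpha$ becomes multiplication by the image of $a$ in $\kappa(y)$, which is zero; so $\mathbf{L}\iota_y^*\alpha$ is the zero map. But $\alpha$ is an isomorphism by hypothesis, so $\mathbf{L}\iota_y^*\alpha$ is also an isomorphism, and a zero map that is an isomorphism forces $\mathbf{L}\iota_y^* Q = 0$, as desired.

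The argument is essentially two lines once set up correctly; the only subtlety is ensuring that derived Nakayama applies, which is automatic in the bounded coherent setting in which the theorem is applied. No real obstacle is anticipated—the conceptual content is just the observation that a section vanishing at a point produces the zero map on derived fibers.
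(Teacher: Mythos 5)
Your proof is correct and is essentially the contrapositive of the paper's argument: the paper says that if $z$ is in the support of $Q$ then the isomorphism $\alpha$ forces the fiber $\alpha(z)\in K_Y^{\otimes n}(z)$ to be nonzero (so $z\in U_{Y,n}$), while you show that $y\notin U_{Y,n}$ forces $\alpha(y)=0$ and hence, by derived Nakayama, $\mathbf{L}\iota_y^*Q=0$. Same mechanism, just stated in the opposite direction and with the Nakayama step written out explicitly.
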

\begin{proof}
Indeed the assumptions imply that for a point $z\in Y$ in the support of $Q$ the fiber $\alpha (z)\in K_Y^{\otimes n}(z)$ is nonzero, and therefore $z\in U_{Y, n}$.
\end{proof}

\begin{lem}\label{L:3.13} Let $Q\in \D(Y)$ be a complex such that 
$$
 Q\otimes ^{\mathbf{L}} \mls C_{Y, n}\simeq Q.
$$
Then the support of $Q$ is contained in  $W_{Y, n}$.
\end{lem}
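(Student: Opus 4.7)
The plan is to prove, for every $y \in Y \setminus W_{Y,n}$, that the stalk $Q_y$ vanishes in $\D(\mls O_{Y,y})$, by reducing modulo the maximal ideal of $R := \mls O_{Y,y}$ and running a top-degree Künneth argument. When $N := \dim_k H_{Y,n} = 1$, $\mathbf{P}H_{Y,n}$ is a point and $\pi_n$ is a morphism on all of $Y$, so $W_{Y,n} = Y$ and the statement is vacuous; I therefore assume $N \geq 2$.

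First I would prove the local claim: if $y \notin W_{Y,n}$, then every $h \in H_{Y,n}$ satisfies $\rho(h) \in \mathfrak{m}_y (K_Y^{\otimes n})_y$. To see this, trivialize $\pi_n^* \mls O_{\mathbf{P}H_{Y,n}}(1)$ at $y$ so that the image of a basis $h_1, \dots, h_N$ of $H_{Y,n}$ is given by elements $g_1, \dots, g_N \in R$ and $\Sigma_{n,y}$ corresponds to the ideal $J = (g_1, \dots, g_N)$. The criterion of \ref{P:4.16b} says $y \notin W_{Y,n}$ precisely when $J$ is not principal, which forces every $g_i$ to lie in $\mathfrak{m}_y$ (were some $g_i$ a unit, $J = R$ would be principal). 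Composing with the inclusion $\pi_n^* \mls O_{\mathbf{P}H_{Y,n}}(1) \hookrightarrow K_Y^{\otimes n}$, given at $y$ by multiplication by some $a \in R$ after trivializing $K_Y^{\otimes n}$, then places $\rho(h_i) = a g_i \in \mathfrak{m}_y$. Consequently, after trivialization, the stalk $\mls C_{Y,n,y}$ is a bounded complex of free $R$-modules whose Koszul differentials all have entries in $\mathfrak{m}_y$, so $C := \mls C_{Y,n,y} \otimes_R k$ has identically zero differentials and $H^q(C) \cong \wedge^{-q+1} H_{Y,n} \otimes_k k$ for $q \leq 0$; in particular $\dim_k H^0(C) = N$.

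Assume for contradiction $Q_y \neq 0$ and let $b$ be the top degree with $H^b(Q_y) \neq 0$. Nakayama on the finitely generated $R$-module $H^b(Q_y)$ gives $H^b(Q_y) \otimes_R k \neq 0$, so $M := Q_y \otimes^{\mathbf{L}}_R k$ is nonzero in $\D(k)$ with top cohomology in degree $b$; moreover, tensoring the local isomorphism $Q_y \otimes^{\mathbf{L}}_R \mls C_{Y,n,y} \simeq Q_y$ with $k$ gives $M \otimes_k C \simeq M$ in $\D(k)$, using that $\mls C_{Y,n,y}$ is termwise free over $R$. By Künneth over $k$, $H^b(M \otimes_k C) = \bigoplus_{p+q=b} H^p(M) \otimes_k H^q(C)$; since $H^p(M) = 0$ for $p > b$ and $H^q(C) = 0$ for $q > 0$, only $(p,q) = (b,0)$ contributes, giving $\dim_k H^b(M \otimes_k C) = N \cdot \dim_k H^b(M)$. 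Comparing with $\dim_k H^b(M)$ forces $\dim_k H^b(M) = 0$, a contradiction. I expect the main obstacle to be the local claim at the start, which uses the saturation relationship between the image of $H_{Y,n}$ in $K_Y^{\otimes n}$ and its saturating line subsheaf $\pi_n^*\mls O_{\mathbf{P}H_{Y,n}}(1)$; once that structure is exhibited the rest is a standard Nakayama–Künneth manipulation.
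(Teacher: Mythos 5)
Your argument is correct and takes essentially the same approach as the paper: look at the top nonvanishing cohomology of $Q$ at a point, apply the top-degree formula for derived tensor products, and deduce a contradiction from a Nakayama-style dimension count over the residue field. The only difference is packaging --- you reduce modulo $\mathfrak{m}_y$ first, note that for $y\notin W_{Y,n}$ the Koszul differentials of $\mls C_{Y,n,y}$ vanish there (so $\dim_k H^0(C)=\dim_k H_{Y,n}\geq 2$), and run K\"unneth over $k$, whereas the paper works over $\mls O_{Y,z}$, concludes via Nakayama that $\mls H^0(\mls C_{Y,n})_z$ is cyclic, and then invokes the criterion of \ref{P:4.16b}.
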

\begin{proof}
Let $z\in \mathrm{Supp}(Q)$ be a point in the support.  Let $t$ be the largest 
integer for which $\mls H^t(Q)_z\neq 0$.  Since $\mls C_{Y, n}\in \D^{\leq 
0}(Y)$ 
we then have
$$
\mls H^t(Q\otimes ^{\mathbf{L}}\mls C_{Y, n})_z\simeq \mls H^t(Q)_z\otimes 
_{\mls O_{Y, z}}\mls H^0(\mls C_{Y, n}).
$$
We therefore find that
$$
\mls H^t(Q)_z\otimes _{\mls O_{Y, z}}\mls H^0(\mls C_{Y, n})\simeq \mls 
H^t(Q)_z.
$$
Since we assume that $\mls H^t(Q)_z$ is nonzero, this implies, by Nakayama's
lemma, that $\mls H^0(\mls C_{Y, n})_z$ is generated by a single element.  It
follows that the  subsheaf $$\Sigma _n\subset \pi _n^*\mls
O_{\mathbf{P}H_{Y, n}}(1)$$ generated by the image of $H_{Y, n}$ is locally free of rank
$1$ at $z$, which implies that $z\in W_{Y, n}$.
\end{proof}

\subsection{Set-theoretic support}

In order to prove \ref{T:3.4} (b) we will first need a set-theoretic statement.

\begin{lem}\label{L:3.16} Let $\mls F$ be a coherent sheaf on $W_{Y, n}$,  and let  $f:Z\rightarrow W_{Y, n}$ be a morphism, with $Z$ 
proper, such that  $\mls F\otimes f^*\mls C_{Y, n}\simeq \mls F$.  Then  $f(\mathrm{Supp}(\mls F))\subset W_{Y, n}$ is contained 
in a finite union of fibers of $\pi _n$. 
\end{lem}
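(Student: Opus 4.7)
The plan is to unwind the hypothesis using the key property of $\mls C_{Y,n}$ on $W_{Y,n}$, then reduce to a classical Euler characteristic argument on a curve.

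First I would observe that on $W_{Y,n}$ the complex $\mls C_{Y,n}$ is quasi-isomorphic to the line bundle $L := \pi_n^*\mls O_{\mathbf{P}H_{Y,n}}(1)$ (this is built into the construction in \ref{P:4.16b}). Hence $f^*\mls C_{Y,n}$ is quasi-isomorphic to the line bundle $M := f^*L$ on $Z$, and since $M$ is locally free of rank one the derived tensor product collapses. The hypothesis therefore reads as an isomorphism of coherent sheaves on $Z$:
$$
\mls F \otimes _{\mls O_Z} M \simeq \mls F.
$$
Setting $h := \pi_n\circ f\colon Z\to \mathbf{P}H_{Y,n}$, we have $M = h^*\mls O(1)$, and the lemma is equivalent to showing that $h(\mathrm{Supp}(\mls F))\subset \mathbf{P}H_{Y,n}$ is a finite set, since then $\mathrm{Supp}(\mls F)\subset h^{-1}(\{p_1,\dots,p_r\})$ and $f(\mathrm{Supp}(\mls F))\subset \bigcup_i\pi_n^{-1}(p_i)$.

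Next I would argue by contradiction: assume $h(\mathrm{Supp}(\mls F))$ has positive dimension, pick an irreducible component $Z_1\subset \mathrm{Supp}(\mls F)$ with $\dim h(Z_1)>0$, and find a well-behaved curve inside it. Applying Chow's lemma we may replace $Z$ by a projective modification dominating $Z_1$, and then cut $Z_1$ down by general hyperplane sections to produce an irreducible projective curve $C\subset Z_1$ satisfying
\begin{enumerate}
\item[(i)] $C$ meets the dense open locus where $\mls F$ has its generic rank $r\geq 1$ along $Z_1$, so $\mls F|_C$ is a coherent sheaf on $C$ with generic rank $r>0$;
\item[(ii)] $h|_C$ is nonconstant, so $\deg(M|_C)>0$.
\end{enumerate}
Both properties are achievable because positive-dimensional fibers of $h|_{Z_1}$ are proper closed subsets and generic hyperplane sections avoid a fixed lower-dimensional locus.

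Finally, restricting the isomorphism $\mls F\otimes M\simeq \mls F$ to $C$ and taking Euler characteristics via Riemann--Roch on the (possibly singular) projective curve $C$ yields
$$
\chi(\mls F|_C) = \chi(\mls F|_C\otimes M|_C) = \chi(\mls F|_C) + r\cdot \deg(M|_C),
$$
forcing $r\cdot\deg(M|_C)=0$, contradicting (i) and (ii).

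The technical heart of the argument is the geometric step producing the curve $C$ with both properties simultaneously; the remaining reduction to a line bundle via the quasi-isomorphism $\mls C_{Y,n}\simeq L$ and the Riemann--Roch computation are then routine.
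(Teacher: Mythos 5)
Your argument is correct and does reach the conclusion, but it follows a genuinely different path from the paper's. The paper first base changes to $\bar k$, then replaces $Z$ by an alteration so that $Z$ is \emph{smooth} and $\mls F$ is supported everywhere, and applies the determinant trick: from $\mls F\otimes M\simeq \mls F$ with $M=f^*\pi_n^*\mls O(1)$ one gets $\det(\mls F)\simeq\det(\mls F)\otimes M^{\otimes r}$ (where $r$ is the generic rank), hence $M^{\otimes r}\simeq\mls O_Z$, so $M$ is a torsion line bundle and the image of $Z$ in $\mathbf P H_{Y,n}$ must be zero-dimensional. The smoothness from the alteration is exactly what makes $\det(\mls F)$ a line bundle and the rank formula valid; in exchange the paper avoids any hyperplane cutting. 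Your route replaces the determinant argument by passing (via Chow's lemma) to a projective modification, slicing down to a general complete-intersection curve $C$, and invoking Riemann--Roch for coherent sheaves on a (possibly singular) integral projective curve; this avoids smoothing but requires Bertini. The two approaches buy roughly the same thing and both reduce in the end to the observation that a nef line bundle that is numerically trivial pulls back from a zero-dimensional image.

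Two points where your write-up should be tightened. First, you should base change to $\bar k$ at the outset, as the paper does: Bertini's irreducibility theorem is not available over an arbitrary (e.g.\ finite) field, and your hyperplane slicing needs it. Second, your justification that properties (i) and (ii) can be achieved simultaneously --- ``generic hyperplane sections avoid a fixed lower-dimensional locus'' --- does not literally apply, since the fibers of $h$ are not a single lower-dimensional locus to avoid. The clean version is: since $M$ is nef and $h$ has positive-dimensional image, $M$ is not numerically trivial, so for a general complete-intersection curve $C=Z'\cap H_1\cap\dots\cap H_{d-1}$ one has $\deg(M|_C)=M\cdot H^{d-1}>0$; such a $C$ is also integral (Bertini over $\bar k$) and passes through the generic point of $Z'$, giving (i). With those adjustments the argument is complete.
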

\begin{proof}
It suffices to prove the lemma after making a base change to an algebraic 
closure of $k$.  Replacing $Z$ be an alteration if necessary we may assume that 
$Z$ is smooth and proper over $k$ and that $\mls F$ is supported on all of $Z$. 
 
Note that over $W_{Y, n}$ we have $\mls C_{Y, n}\simeq \mls O_{Y}(n)$  so $f^*\mls C_{Y, n}\simeq f^*\pi ^*\mls O_{\mathbf{P}H_{Y, n}}(1)$.

Let $r$ be the generic rank of $\mls F$.  Then taking determinants we find that 
$$
\mathrm{det}(\mls F)\simeq \mathrm{det}(\mls F)\otimes f^*\pi ^*\mls O_{\mathbf{P}H_{Y, n}}(r).
$$
Therefore $f^*\pi ^*\mls O_{\mathbf{P}H_{Y, n}}(1)$) is a torsion line bundle on $Z$, which implies that the 
image 
of $Z$ in $\mathbf{P}H_{Y, n}$ is a zero-dimensional subscheme.
\end{proof}

\begin{pg} For a point $x\in W_{X, n}$ the skyscraper sheaf $\kappa (x)$ has the property that 
$$
\kappa (x)\lotimes \mls C_{X, n}\simeq \kappa (x).
$$
It follows that we also have
$$
P_x\lotimes \mls C_{Y, n}\simeq P_x
$$
in $D(Y)$ (Note: we already showed that the support of these complexes lies in $W_{Y, n}$).  By the lemma we conclude that the image of the support of $P_x$ in $\mathbf{P}H_{Y, n}$ lies in a finite number of fibers of $\pi _n$.  And since $\mathrm{End}_{D(Y)}(P_x) = k$ the support is, in fact, connected.  We have shown:
\end{pg}

\begin{cor}\label{C:3.17} The set-theoretic support of $P|_{W_{X, n}\times Y}$ is contained in 
$$
W_{X, n}\times _{\mathbf{P}H_{Y, n}}W_{Y, n},
$$
where the map $W_{X, n}\rightarrow \mathbf{P}H_{Y, n}$ is the composition of $\pi _n:W_{X, n}\rightarrow \mathbf{P}H_{X, n}$ and the isomorphism \eqref{E:3.2.1}.
\end{cor}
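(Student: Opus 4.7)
The plan is to verify the containment fiber-by-fiber over closed points $x\in W_{X, n}$, combining the tensor relation coming from \cref{P:3.34} with \cref{L:3.16}.  Write $P_x:=\Phi (\kappa (x))\in \D(Y)$.  Since $\mls C_{X, n}|_{W_{X, n}}$ is quasi-isomorphic to the line bundle $\pi _n^*\mls O_{\mathbf{P}H_{X, n}}(1)$ and $\kappa (x)$ is a skyscraper at a point of $W_{X, n}$, one has $\kappa (x)\lotimes \mls C_{X, n}\simeq \kappa (x)$, and \cref{P:3.34} transports this across $\Phi $ to $P_x\lotimes \mls C_{Y, n}\simeq P_x$.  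By \cref{T:3.4}(a), $\mathrm{Supp}(P_x)\subset W_{Y, n}$, and on $W_{Y, n}$ the relation becomes $P_x\otimes \pi _n^*\mls O_{\mathbf{P}H_{Y, n}}(1)\simeq P_x$, which passes (by flatness) to each cohomology sheaf: $\mls H^i(P_x)\otimes \pi _n^*\mls O_{\mathbf{P}H_{Y, n}}(1)\simeq \mls H^i(P_x)$.  Applying \cref{L:3.16} to each $\mls H^i(P_x)$, with $Z^i:=\mathrm{Supp}(\mls H^i(P_x))$ proper over $k$ since $Y$ is projective, shows that $\pi _n(\mathrm{Supp}(P_x))\subset \mathbf{P}H_{Y, n}$ is a finite set.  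Since $\Phi $ is fully faithful, $\mathrm{End}_{\D(Y)}(P_x)=\mathrm{End}_{\D(X)}(\kappa (x))=k$, so $P_x$ is indecomposable and its set-theoretic support is connected; it therefore lies in a single fiber $\pi _n^{-1}(q(x))$ for some $q(x)\in \mathbf{P}H_{Y, n}$.

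To identify $q(x)=\tau (\pi _n(x))$, I would first treat $x\in U_{X, n}$.  For $\alpha \in H_{X, n}$ lying in the hyperplane $H_x$ corresponding to $\pi _n(x)$ --- i.e.\ with $\alpha (x)=0$ in $K_X^{\otimes n}(x)$ --- the multiplication map $\alpha :\kappa (x)\rightarrow \kappa (x)\lotimes K_X^{\otimes n}$ vanishes, and the compatibility built into the proof of \cref{P:3.34} forces $\tilde \tau (\alpha ):P_x\rightarrow P_x\lotimes K_Y^{\otimes n}$ also to vanish.  For $y\in \mathrm{Supp}(P_x)\cap U_{Y, n}$ this forces $\tilde \tau (\alpha )(y)=0$ in $K_Y^{\otimes n}(y)$, placing $\tilde \tau (\alpha )$ in the hyperplane $H_y$; varying $\alpha $ over $H_x$ yields $\tilde \tau (H_x)\subseteq H_y$, hence equality as both are hyperplanes, and therefore $\pi _n(y)=\tau (\pi _n(x))$.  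The statement over $W_{X, n}$ then follows by closure: $\mathrm{Supp}(P|_{W_{X, n}\times Y})$ is closed in $W_{X, n}\times W_{Y, n}$ and meets $U_{X, n}\times Y$ in a subset of $U_{X, n}\times _{\mathbf{P}H_{Y, n}}U_{Y, n}$, whose closure inside $W_{X, n}\times W_{Y, n}$ is $W_{X, n}\times _{\mathbf{P}H_{Y, n}}W_{Y, n}$.

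The main obstacle I anticipate is this last identification.  The stalk-level vanishing argument yields information only at points $y\in \mathrm{Supp}(P_x)$ which lie in $U_{Y, n}$: on $W_{Y, n}\setminus U_{Y, n}$ every section of $H_{Y, n}$ already vanishes as a section of $K_Y^{\otimes n}$ at each point, so the condition $\tilde \tau (\alpha )(y)=0$ in $K_Y^{\otimes n}(y)$ is automatic and uninformative.  One must therefore either verify that each component of $\mathrm{Supp}(P_x)$ meets $U_{Y, n}$ for $x$ in a dense subset of $W_{X, n}$, so that closedness of the support closes the argument, or refine the stalk computation to work with the subline bundle $\pi _n^*\mls O(1)\subset K_Y^{\otimes n}$ on $W_{Y, n}$ directly, using the restriction to $W_{X, n}\times W_{Y, n}$ of the isomorphism $p_X^*\mls C_{X, n}\lotimes P\simeq p_Y^*\mls C_{Y, n}\lotimes P$ supplied by \cref{P:3.34}.
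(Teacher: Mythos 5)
Your first paragraph reproduces the paper's own argument almost verbatim: transfer $\kappa(x)\lotimes\mls C_{X,n}\simeq\kappa(x)$ across $\Phi$ using \cref{P:3.34} to get $P_x\lotimes\mls C_{Y,n}\simeq P_x$, apply \cref{L:3.16} (cohomology sheaf by cohomology sheaf, which is a sensible way to fill in the application of that sheaf-level lemma to the complex $P_x$) to conclude that $\pi_n(\Supp P_x)$ is finite, and use $\End(P_x)=k$ to get connectedness, so $\Supp(P_x)$ lies in a single fiber $\pi_n^{-1}(q(x))$. Where you go further than the paper is in observing that the corollary as stated also requires identifying $q(x)$ with $\tau(\pi_n(x))$; the paper's written proof stops at ``a single fiber'' and leaves this identification implicit. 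Your vanishing computation for $x\in U_{X,n}$ and $y\in\Supp(P_x)\cap U_{Y,n}$ is correct: if $\alpha\in H_x$ then $\alpha\cdot\kappa(x)=0$, so $\tilde\tau(\alpha)\cdot P_x=0$, so $\tilde\tau(\alpha)$ vanishes at every point of $\Supp(P_x)$, and for $y\in U_{Y,n}$ this identifies $\tilde\tau(H_x)$ with $H_y$ and hence $\pi_n(y)$ with $\tau(\pi_n(x))$.

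The concern you raise about extending this off $U$ is genuine, and of the two repairs you sketch only the second works. The closure argument is incomplete: $\Supp(P|_{W_{X,n}\times Y})$ is not a priori the closure of its intersection with $U_{X,n}\times Y$, so landing in the fiber product over $U_{X,n}$ does not place the whole support there unless you also show every irreducible component of the support meets $U_{X,n}\times Y$, which you have not established (and the related variant, that $\Supp(P_x)$ meets $U_{Y,n}$ for dense $x$, has the same gap). The $\Sigma$-bundle refinement does work. Over $W_{X,n}$ the complex $\mls C_{X,n}$ is quasi-isomorphic to the invertible subsheaf $\Sigma_{X,n}\subset K_X^{\otimes n}$, and likewise over $W_{Y,n}$, so the isomorphism $p^*\mls C_{X,n}\lotimes P\simeq q^*\mls C_{Y,n}\lotimes P$ of \cref{P:3.34} restricts over $W_{X,n}\times W_{Y,n}$ to $p^*\Sigma_{X,n}\otimes P\simeq q^*\Sigma_{Y,n}\otimes P$, intertwining the $H_{X,n}$- and $H_{Y,n}$-actions through $\tilde\tau$. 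Since $H_{Y,n}\to\Sigma_{Y,n}(y)$ is surjective for \emph{every} $y\in W_{Y,n}$, not only for $y\in U_{Y,n}$, the hyperplane computation pins down $\pi_n(y)$ at every point of $\Supp(P_x)$ for every $x\in W_{X,n}$. Taking that route your proof is complete, and it makes explicit a step that the paper's terse treatment passes over.
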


\begin{pg}
Though not used in what follows, we also observe that $P$ induces derived equivalences of open varieties as follows.  Note that since             
$$
P_{U, n}:= P|_{U_{X, n}\times U_{Y,n}} \ (\mathrm{resp.} \ P_{W, n}:= P|_{W_{X, n}\times W_{Y, n}})
$$
 has proper support over both 
$U_{X, n}$ and $U_{Y, n}$ (resp. $W_{X, n}$ and $W_{Y, n}$) the complex $P$ induces functors
$$
\Phi _{U, n}:\D(U_{X, n})\rightarrow \D(U_{Y, n}), \ \ \Phi _{W, n}:\D(W_{X, n})\rightarrow \D(W_{Y, n}).
$$
\end{pg}

\begin{prop}\label{P:3.18} The functors $\Phi _{U, n}$ and $\Phi _{W, n}$ are equivalences of 
triangulated categories.
\end{prop}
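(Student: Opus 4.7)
The plan is to produce a quasi-inverse to $\Phi_{U, n}$ (and similarly for $\Phi_{W, n}$) by applying the same restriction procedure to the inverse of $\Phi$ and then checking that convolution of Fourier--Mukai kernels is compatible with open restriction, thanks to the support containments established in \cref{T:3.4} (a).

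First, by Orlov's representability theorem the quasi-inverse $\Phi^{-1}:\D(Y)\rightarrow \D(X)$ is itself given by a kernel $Q\in \D(Y\times X)$.  Since $\Phi^{-1}$ is again a derived equivalence of smooth projective varieties and induces on canonical rings the inverse of $\tilde \tau$, applying \cref{T:3.4} (a) to $\Phi^{-1}$ shows that the support of $Q|_{U_{Y, n}\times X}$ is contained in $U_{Y, n}\times U_{X, n}$ and the support of $Q|_{W_{Y, n}\times X}$ is contained in $W_{Y, n}\times W_{X, n}$.  The restrictions $Q_{U, n}$ and $Q_{W, n}$ therefore have proper support over both factors and define functors $\Psi_{U, n}:\D(U_{Y, n})\rightarrow \D(U_{X, n})$ and $\Psi_{W, n}:\D(W_{Y, n})\rightarrow \D(W_{X, n})$.

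Next I would verify that $\Psi_{U, n}$ is a two-sided quasi-inverse of $\Phi_{U, n}$ by comparing convolution kernels on $U_{X, n}\times U_{X, n}$.  The identity $\Phi^{-1}\circ \Phi \simeq \mathrm{id}_{\D(X)}$ is encoded by an isomorphism
$$
R\pi_{13*}(L\pi_{12}^*P\lotimes L\pi_{23}^*Q)\simeq \Delta_{X*}\mls O_X
$$
on $X\times X$, where the $\pi_{ij}$ are the projections from $X\times Y\times X$.  The key observation is that after restricting the outer factors to $U_{X, n}\times U_{X, n}$, the support constraints on $P$ and $Q$ force the integrand to live in $U_{X, n}\times U_{Y, n}\times U_{X, n}$; combining this with flat base change along the open immersion $U_{X, n}\times U_{X, n}\hookrightarrow X\times X$, one identifies the restriction of the global convolution with the convolution of $P_{U, n}$ and $Q_{U, n}$ computed on the open triple product, and concludes that this common value is $\Delta_{U_{X, n}*}\mls O_{U_{X, n}}$.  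This gives $\Psi_{U, n}\circ \Phi_{U, n}\simeq \mathrm{id}$, and the symmetric computation using $\Phi\circ \Phi^{-1}\simeq \mathrm{id}$ yields $\Phi_{U, n}\circ \Psi_{U, n}\simeq \mathrm{id}$.  Replacing $U$ by $W$ throughout handles $\Phi_{W, n}$.

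The main technical obstacle is verifying the compatibility of kernel convolution with open restriction: one must combine flat base change with the support containments of \cref{T:3.4} (a) in order to migrate the middle factor of the correspondence from $Y$ down to $U_{Y, n}$ (resp.\ $W_{Y, n}$), so that the restriction of the ambient convolution really does coincide with the convolution of the restricted kernels.
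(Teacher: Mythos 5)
Your proposal is correct and follows essentially the same argument as the paper: restrict the kernel $P^\vee$ of $\Phi^{-1}$ to the open locus (using \cref{T:3.4} (a) applied to $\Phi^{-1}$ to control its support), then use flat base change along the open immersion together with the support containments to show that the restriction of the global convolution $\Delta_{X*}\mls O_X$ agrees with the convolution of the restricted kernels, yielding the quasi-inverse property. The only cosmetic difference is that you spell out the flat-base-change step and the migration of the middle factor from $Y$ to $U_{Y,n}$ explicitly, whereas the paper records the same point via the cartesian diagram and the observation that $P|_{U_{X,n}\times Y}$ is the pushforward of $P_{U,n}$.
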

\begin{proof}
That $\Phi _{U, n}$ is an equivalence can be seen as follows.
  Let $P^\vee \in \D(Y\times X)$ be the complex defining $\Phi 
^{-1}:\D(Y)\rightarrow \D(X)$, and let $P^\vee _U$ be the restriction of $P^\vee 
$ 
to $U_{Y, n}\times U_{X, n}$, which defines
  $$
  \Phi _{U, n}^\vee :\D(U_{Y, n})\rightarrow \D(U_{X, n})
  $$
  We claim that $\Phi _{U, n}\circ \Phi _{U, n}^\vee \simeq \mathrm{id}_{\D(U_{Y, n})}$ and 
$\Phi _{U, n}^\vee \circ \Phi _{U, n} \simeq \mathrm{id}_{\D(U_{X, n})}$.
  
  To see this observe that the restriction of $P$ to $U_{X, n}\times Y$ is equal to 
the pushforward of $P_{U, n}$ by \ref{T:3.4} (a), and similarly for $P^\vee $.  Since 
the diagram
  $$
  \xymatrix{
  X\times Y\times X\ar[d]^-{\mathrm{pr}_{13}}& U_{X, n}\times Y\times 
U_{X, n}\ar[d]^-{\mathrm{pr}_{13}}\ar@{_{(}->}[l]\\
  X\times X& U_{X, n}\times U_{X, n}\ar@{_{(}->}[l]}
  $$
  is cartesian we conclude that the pushforward of $p_{12}^*P_U\otimes 
p_{23}^*P_U^\vee $ along the map
  $$
  p_{13}:U_{X, n}\times U_{Y, n}\times U_{X, n}\rightarrow U_{X, n}\times U_{X, n}
  $$
  is isomorphic to $\Delta _{U_{X, n}*}\mls O_{U_{X, n}}.$  It follows that $\Phi _{U, n}^\vee 
\circ \Phi _{U, n} \simeq \mathrm{id}_{\D(U_{X, n})}$.   The isomorphism $$\Phi _{U, n}\circ 
\Phi 
_{U, n}^\vee \simeq \mathrm{id}_{\D(U_{Y, n})}$$ is shown similarly.

The proof that $\Phi _{W, n}$ is an equivalence follows verbatim from the preceding argument replacing ``$U$'' by ``$W$'' everywhere.
\end{proof}

\subsection{Proof of \ref{T:3.4} (b)}
\begin{pg}
First recall Beilinson's resolution of the diagonal on a projective space $\mathbf{P}(V)$ \cite{Beilinsonresolution}.  This resolution takes the form (let $d$ denote the dimension of $\mathbf{P}(V)$)
$$
0\rightarrow p_1^*\mls O_{\mathbf{P}(V)}(-d)\otimes p_2^*\Omega _{\mathbf{P}(V)} ^d(d)\rightarrow \cdots \rightarrow p_1^*\mls O_{\mathbf{P}(V)}(-1)\otimes p_2^*\Omega ^1_{\mathbf{P}(V)}(1)\rightarrow \mls O_{\mathbf{P}(V)\times \mathbf{P}(V)}\rightarrow \mls O_\Delta \rightarrow 0.
$$
The transition maps are obtained as follow.  We have 
\begin{align*}
& \ \mathrm{Hom}(p_1^*\mls O_{\mathbf{P}(V)}(-i)\otimes p_2^*\Omega ^{i}_{\mathbf{P}(V)}(i), p_1^*\mls O_{\mathbf{P}(V)}(-i+1)\otimes p_2^*\Omega ^{i-1}_{\mathbf{P}(V)}(i-1))\\ & \simeq \mathrm{Hom}_{\mathbf{P}(V)}(\mls O_{\mathbf{P}(V)}(-i), \mls O_{\mathbf{P}(V)}(-i+1))\otimes \mathrm{Hom}_{\mathbf{P}(V)}(\Omega ^i_{\mathbf{P}(V)}(i), \Omega ^{i-1}_{\mathbf{P}(V)}(i-1))\\
& \simeq V\otimes \mathrm{Hom}_{\mathbf{P}(V)}(\Omega ^i_{\mathbf{P}(V)}(i), \Omega ^{i-1}_{\mathbf{P}(V)}(i-1)).
\end{align*}
From the twisted Euler sequence
$$
0\rightarrow \mls O_{\mathbf{P}(V)}(-1)\rightarrow V^\vee \otimes _k\mls O\rightarrow T_{\mathbf{P}(V)}(-1)\rightarrow 0
$$
we obtain an isomorphism
$$
V^\vee \simeq H^0(\mathbf{P}(V), T_{\mathbf{P}(V)}(-1)).
$$
Together with the natural map 
\begin{equation}\label{E:4.30.15}
V^\vee \simeq H^0(\mathbf{P}(V), T_{\mathbf{P}(V)}(-1))\rightarrow \mathrm{Hom}_{\mathbf{P}(V)}(\Omega ^i_{\mathbf{P}(V)}(i), \Omega ^{i-1}_{\mathbf{P}(V)}(i-1))
\end{equation}
we then get a map
$$
V\otimes V^\vee \rightarrow \mathrm{Hom}(p_1^*\mls O_{\mathbf{P}(V)}(-i)\otimes p_2^*\Omega ^{i}_{\mathbf{P}(V)}(i), p_1^*\mls O_{\mathbf{P}(V)}(-i+1)\otimes p_2^*\Omega ^{i-1}_{\mathbf{P}(V)}(i-1)).
$$
The image of the identity class in $V\otimes V^\vee $ defines under this map the differential in the Beilinson resolution.
\end{pg}

\begin{pg}
Returning to the proof of \ref{T:3.4} (b), let $\mathbf{P}\subset \mathbf{P}H_{X, n}$ be the closure of the image of $W_{X, n}$, viewed as a scheme with the reduced-induced structure, 
and let
$$
f:W_{X, n}\rightarrow \mathbf{P}, \ \ g:W_{Y, n}\rightarrow \mathbf{P}
$$
be the natural maps.
\end{pg}

\begin{pg}
Consider first the case when $k$ is infinite.

In this case, for a suitable subspace $V\subset H_{X, n}$ the induced rational map
$$
\xymatrix{
\mathbf{P}\ar@{-->}[r]& \mathbf{P}(V)}
$$
is everywhere defined, finite, and generically \'etale (see for example \cite[2.11]{alterations}).

Let
$$
f':W_{X, n}\rightarrow \mathbf{P}(V), \ \ g':W_{Y}\rightarrow \mathbf{P}(V)
$$
be the induced maps, and let $E(f', g')^\bullet $ be the associated cosimplicial scheme.
We write
$$
\delta _i':W_{X,  n}\times W_{Y, n}\rightarrow E(f', g') = W_{X, n}\times \mathbf{P}(V)\times W_{Y, n}
$$
for the structure maps in this cosimplicial scheme (and similarly for other maps occurring in the cosimplicial structure). 

Note that with the notation of \cref{P:4.16b} we have
$$
f^{\prime *}\mls O_{\mathbf{P}(V)}(1) = \Sigma _{X, n}, \ \ g^{\prime *}\mls O_{\mathbf{P}(V)}(1)= \Sigma _{Y, n}.
$$
Note that these are line bundles on $W_{X, n}$ and $W_{Y, n}$.

For an open set $B\subset \mathbf{P}(V)$ we can also consider the restrictions
$$
f'_B:W_{X, n, B}\rightarrow B, \ \ g'_B:W_{Y, n, B}\rightarrow B,
$$
and the associated cosimplicial scheme
$$
E(f_B', g_B')\hookrightarrow E(f', g').
$$
\end{pg}

\begin{pg}
We have a cartesian diagram
$$
\xymatrix{
W_{X, n}\ar[d]\ar[r]& W_{X, n}\times \mathbf{P}(V)\ar[d]\\
\mathbf{P}(V)\ar[r]^-{\Delta }& \mathbf{P}(V)\times \mathbf{P}(V).}
$$
Pulling back the Beilinson resolution of the diagonal of $\mathbf{P}(V)$ we obtain a complex on $W_{X, n}\times \mathbf{P}(V)$ of the form
\begin{equation}\label{E:4.2.1}
p_1^*\Sigma _{X, n}^{\otimes (-d)}\otimes p_2^*\Omega _{\mathbf{P}(V)} ^d(d)\rightarrow \cdots \rightarrow p_1^*\Sigma _{X, n}^{\otimes (-1)}\otimes p_2^*\Omega ^1_{\mathbf{P}(V)}(1)\rightarrow \mls O_{X\times \mathbf{P}(V)}.
\end{equation}
Over the locus in $W_{X, n}$ where the map $f'$ is flat this is a resolution of $\mls O_{\gamma _f}$, where $\gamma _f:W_{X, n}\rightarrow W_{X, n}\times \mathbf{P}(V)$ is the graph of $f$.  Pulling the complex \eqref{E:4.2.1} back  to $W_{X, n}\times \mathbf{P}(V)\times W_{Y, n}$ along the first two projections we get a complex on $E(f', g')^1$, which over the preimage of the flat locus of $f'$ is a resolution of $\delta _{0*}\mls O_{W_{X, n}\times W_{Y, n}}$.

\end{pg}
\begin{pg}
For $s\leq 0$ set 
\begin{equation}\label{E:4.3.1}
P_X^s:= p_{13}^*(p_1^*\Sigma _{X, n}^{\otimes {s}}\otimes P)\otimes p_2^*\Omega ^{-s} _{\mathbf{P}(V)}(-s),
\end{equation}
an object of $D(W_{X, n}\times \mathbf{P}(V)\times Y)$, and let $P_X^{s}\rightarrow P_X^{s+1}$ be the maps induced by the maps in \eqref{E:4.2.1}.

Projecting along   $W_{X, n}\times \mathbf{P}(V)\times Y\rightarrow W_{X, n}$ we find 
\begin{align*}
& \ \ \ \ \  \mathrm{RHom}_{W_{X, n}\times \mathbf{P}(V)\times Y}(P_X^i, P_X^j)\\
& \simeq R\Gamma (W_{X, n}\times \mathbf{P}(V)\times Y, p_1^*\Sigma _{X, n}^{\otimes (j-i)}\otimes p_2^*\mls RHom (\Omega ^{-i}_{\mathbf{P}(V)}(-i), \Omega ^{-j}_{\mathbf{P}(V)}(-j))\otimes p_{13}^*\mls RHom(P, P)) \\
& \simeq R\Gamma (W_{X, n}, \Sigma _{X, n}^{\otimes (j-i)}\otimes Rp_{1*}\mls RHom (P, P))\otimes _k\mathrm{RHom}(\Omega ^{-i}_{\mathbf{P}(V)}(-i), \Omega ^{-j}_{\mathbf{P}(V)}(-j)).
\end{align*}
Now  we have $Rp_{1*}\mls RHom (P, P)\in D^{\geq 0}(W_{X, n})$ and the natural map
$$
\mls O_{W_{X, n}}\rightarrow R^0p_{1*}\mls RHom (P, P)
$$
is an isomorphism (see for example the discussion in \cite[Remark 5.1]{lieblicholsson}).  We conclude that 
$$
\mathrm{Ext}^s(P^i_X, P^j_X) = 0
$$
for  $s<0$.  
Moreover, we have
\begin{align*}
\mathrm{Hom}(P^s_X, P^{s+1}_X)& \simeq \Gamma (W_{X, n}, \Sigma _{X, n})\otimes \mathrm{Hom}_{\mathbf{P}(V)}(\Omega ^{-s}(-s), \Omega ^{-(s+1)}(-(s+1)))\\
& \hookrightarrow \Gamma (X, K_X^{\otimes r})\otimes  \mathrm{Hom}_{\mathbf{P}(V)}(\Omega ^{-s}(-s), \Omega ^{-(s+1)}(-(s+1))).
\end{align*}

Using the map \eqref{E:4.30.15} we get a map
$$
\Gamma (X, K_X^{\otimes n})\otimes V^\vee \rightarrow \Gamma (X, K_X^{\otimes n})\otimes  \mathrm{Hom}_{\mathbf{P}(V)}(\Omega ^{-s}(-s), \Omega ^{-(s+1)}(-(s+1))).
$$
The image of the class 
 $\lambda _X\in \Gamma (X, K_X^{\otimes n})\otimes V^\vee $ adjoint to the inclusion $V\rightarrow \Gamma (X, K_X^{\otimes n})$ then equals the class of the differential $P^s_X\rightarrow P^{s+1}_X$.

By \cite[1.4]{BO}  the complex $P_X^\bullet $ in $D(W_{X, n}\times \mathbf{P}(V)\times Y)$ is induced by a unique object $\mls P_X\in DF(W_{X, n}\times \mathbf{P}(V)\times Y)$ of the filtered derived category.

Note that by \ref{C:3.17} this complex is supported on $W_{X, n}\times \mathbf{P}(V)\times W_{Y, n}$, and we view $\mls P_X$ more symmetrically as an object of $DF(W_{X, n}\times \mathbf{P}(V)\times W_{Y, n})$.
\end{pg}

\begin{pg}
We can also interchange $X$ and $Y$ and define
$$
P^s_Y:= p_{13}^*( P\otimes p_2^*\Sigma _{Y, n}^{\otimes s})\otimes p_2^*\Omega ^{-s}\Omega _{\mathbf{P}(V)}(-s).
$$
Using the isomorphism \eqref{E:3.2.1}
we view $P^s_Y$ as an object in $D(X\times \mathbf{P}(V)\times W_{Y, n})$.  As above we then get an object $\mls P_Y\in DF(W_{X, n}\times \mathbf{P}(V)\times W_{Y, n})$.

The isomorphism constructed in the proof of \cref{P:3.34}
$$
P\otimes p_1^*\Sigma _{X, n}\simeq P\otimes p_2^*\Sigma _{Y, n}
$$
induces isomorphisms
$$
\lambda _s:P^s_X\rightarrow P^s_Y.
$$
These isomorphisms are compatible with the transition maps (this follows from the construction of the isomorphism \eqref{E:3.2.1}) and therefore we get an isomorphism of complexes 
$$
\lambda _\bullet :P_X^\bullet \rightarrow P_Y^\bullet .
$$
By \cite[1.3]{BO} this is induced by a unique isomorphism
$$
\lambda :\mls P_X\rightarrow \mls P_Y
$$
in $DF(W_{X, n}\times \mathbf{P}(V)\times W_{Y, n})$.  If $B\subset \mathbf{P}(V)$ is an open set over which $f_B'$ and $g_{B}'$ are flat this induces an isomorphism
$$
\lambda :\delta _{0*}'P\rightarrow \delta _{1*}'P
$$
in $D(W_{X, n, B}\times B\times W_{Y, n, B})$.  
\end{pg}

\begin{lem}\label{L:4.4} If  $B\subset \mathbf{P}(V)$ is an open set over which $f'$ and $g'$ are flat, then  the following hold (let $\mathrm{pr}_3:W_{X, n, B}\times B\times W_{Y, n, B}\rightarrow W_{Y, n, B}$ denote the projection to the third factor):
\begin{enumerate}
    \item [(i)]   $R^i\mathrm{pr}_{3*}\mls RHom(\delta _{0*}'P, \delta _{1*}'P) = 0$ for $i<0$.
    \item [(ii)] The natural maps
$$
\xymatrix{
    R^0\mathrm{pr}_{3*}\mls RHom(\delta _{0*}'P, \delta _{1*}'P)\ar[r]& R^0\mls Hom(R\mathrm{pr}_{3*}\delta _{0*}'P,  R\mathrm{pr}_{3*}\delta _{1*}'P)& \mls O_{W_{Y, n, B}}\ar[l]}
    $$
are isomorphisms, where  the second map is obtained from the identification $\mathrm{pr}_3\circ \delta _1'\simeq \mathrm{pr}_3\circ \delta _0'$.
\item [(iii)] The isomorphism $\lambda $ satisfies the cocycle condition on $E(f_B, g_B)^2$.
\end{enumerate}
\end{lem}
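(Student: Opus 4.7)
The plan is to replace $\delta'_{0*}P$ and $\delta'_{1*}P$ by their Beilinson resolutions $P^\bullet_X$ and $P^\bullet_Y$, which are quasi-isomorphic over $B$ (flat pullback of Beilinson's resolution of the diagonal remains a resolution after tensoring with $q_{13}^*P$).  All three statements then reduce to analyzing $R\mathrm{pr}_{3*}\mls RHom (P^\bullet_X, P^\bullet_Y)$ via a double-complex argument.

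For (i) and (ii), I would compute each term $R\mathrm{pr}_{3*}\mls RHom (P^s_X, P^t_Y)$ by decomposing the tensor factors.  Using the projection formula and K\"unneth, together with the crucial isomorphism $p_1^*\Sigma_{X, n} \lotimes P \simeq p_2^*\Sigma_{Y, n} \lotimes P$ from the proof of \cref{P:3.34} (which implies the analogous isomorphism for $\mls RHom (P, P)$), one obtains
$$
R\mathrm{pr}_{3*}\mls RHom (P^s_X, P^t_Y) \simeq \Sigma_{Y, n}^{\otimes (t - s)} \lotimes Rp_{2*}\mls RHom (P, P) \otimes_k R\Gamma (B, \mls RHom (\Omega^{-s}(-s), \Omega^{-t}(-t))).
$$
After shrinking $B$ to an affine open (harmless for the conclusion of \cref{T:3.4}(b)), the rightmost factor is concentrated in degree $0$ by Bott's theorem, while $Rp_{2*}\mls RHom (P, P) \in D^{\geq 0}$ with $R^0 = \mls O_{W_{Y, n}}$ by the kernel-of-equivalence property derived from \cref{P:3.18}.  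A spectral-sequence collapse for the double complex --- whose differentials mirror those of the iterated Beilinson resolution of $\mls O_\Delta$ on $\mathbf{P}(V) \times \mathbf{P}(V)$ --- singles out the $(0, 0)$-piece as the unique contribution to the total $R^0$, giving (i) and the isomorphism $R^0\mathrm{pr}_{3*}\mls RHom (\delta'_{0*}P, \delta'_{1*}P) \simeq \mls O_{W_{Y, n, B}}$ of (ii).  The natural map from $\mls O_{W_{Y, n, B}}$ matches through the identifications $R\mathrm{pr}_{3*}\delta'_{i*}P \simeq R\pi_{Y*}P$ for $i = 0, 1$.

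For (iii), an entirely parallel analysis on $E(f'_B, g'_B)^2$ yields the corresponding $R^{-1}$-vanishing and $R^0 \simeq \mls O_{W_{Y, n, B}}$ for the relevant $\mls RHom$.  The two sides of the cocycle are therefore rigidified up to multiplication by a unit in $\Gamma(W_{Y, n, B}, \mls O)$; the unit is $1$ because both sides ultimately arise from pulling back canonical Beilinson data on $\mathbf{P}(V)^{\times 3}$, where the cocycle holds tautologically.  This final compatibility can be verified at a single geometric point of $W_{Y, n, B}$.

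The main obstacle is the double-complex bookkeeping in (i) and (ii): one must verify that the spectral sequence degenerates and that the unique surviving $(0, 0)$-piece indeed gives $\mls O_{W_{Y, n, B}}$ rather than something larger.  This requires carefully matching the Beilinson differentials of $P^\bullet_X$ and $P^\bullet_Y$ to the standard ones on $\mathbf{P}(V) \times \mathbf{P}(V)$, so as to inherit the fundamental identity $R\Gamma(\mathbf{P}(V), \mls RHom (\mls O_\Delta, \mls O_\Delta)) = k$ in degree $0$.
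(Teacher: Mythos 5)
Your proposal takes a genuinely different route from the paper.  The paper uses the already-constructed isomorphism $\lambda:\delta'_{0*}P\simeq \delta'_{1*}P$ to reduce (i) and (ii) to computing $R\mathrm{pr}_{3*}\mls RHom(\delta'_{1*}P,\delta'_{1*}P)$, which is then done directly by adjunction: one writes $\mls RHom(\delta'_{1*}P,\delta'_{1*}P)\simeq \delta'_{1*}\bigl(\mls RHom(P,P)\otimes^{\mathbf{L}}(\mathbf{L}\delta'^*_1\delta'_{1*}\mls O)^\vee\bigr)$, observes that $\delta'_1$ factors through the graph $j$ of $g'_B$ (a regular embedding because $B\subset \mathbf{P}(V)$ is smooth), so that $(\mathbf{L}j^*j_*\mls O)^\vee$ is concentrated in degrees $\geq 0$ with $H^0=\mls O$, and tensors this with $Rp_{2*}\mls RHom(P,P)\in D^{\geq 0}$.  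Everything falls out in one stroke.  You instead re-enter via the Beilinson resolutions $P^\bullet_X$ and $P^\bullet_Y$ and try to extract the answer from a spectral sequence; this is a coherent plan, but as written it has gaps.

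The central gap is in your ``spectral-sequence collapse.''  Your formula
$R\mathrm{pr}_{3*}\mls RHom(P^s_X,P^t_Y)\simeq \Sigma^{\otimes(t-s)}_{Y,n}\otimes Rp_{2*}\mls RHom(P,P)\otimes_k R\Gamma(B,\mls RHom(\Omega^{-s}(-s),\Omega^{-t}(-t)))$
is correct, but once you shrink $B$ to an affine open you lose precisely the vanishing that would make the $E_1$-page degenerate.  Over all of $\mathbf{P}(V)$ the exceptional-collection identities give $\mathrm{Hom}(\Omega^{-s}(-s),\Omega^{-t}(-t))=0$ for $t<s$, but the corresponding $\Gamma(B,-)$ over affine $B$ is emphatically nonzero for $t<s$.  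Hence the $E_1$-page has nonzero terms in total degree $t-s<0$, and the assertion that ``the $(0,0)$-piece is the unique contribution to the total $R^0$'' is false at $E_1$.  To conclude (i) and (ii) you would have to chase the $d_1$-differentials (induced by the Beilinson differentials) and show that the entire negative-degree part of the total complex is exact.  This is not a formal collapse; it is exactly the content of the lemma and needs a real argument, which the proposal doesn't supply.  Similarly, in (iii), the claim that ``the unit is $1$ because both sides ultimately arise from pulling back canonical Beilinson data on $\mathbf{P}(V)^{\times 3}$, where the cocycle holds tautologically'' is not a proof: the isomorphism $\lambda$ is constructed from the derived equivalence via the $\sigma_n$, not as a pullback of diagonal data, and the fact that the two composites of $\lambda$ agree on $E(f'_B,g'_B)^2$ requires an injectivity statement like the one the paper establishes (that pushforward along $[2]\to[0]$ induces an injection $\mathrm{Hom}(t_{2*}P,t_{0*}P)\hookrightarrow \mathrm{Hom}(P,P)$), not a pointwise check.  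I would recommend abandoning the bicomplex and instead using the isomorphism $\delta'_{0*}P\simeq\delta'_{1*}P$ you already have to reduce to the single complex $\mls RHom(\delta'_{1*}P,\delta'_{1*}P)$, where the Koszul complex of the graph embedding does the work in one step.
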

\begin{proof}
Since $\delta _{0*}'P\simeq \delta _{1*}'P$, to prove (i) and (ii)  it suffices to prove the analogous statements with $\delta _{0*}'P$ replaced by $\delta _{1*}'P$.

Consider the diagram
$$
\xymatrix{
W_{X, n, B}\times W_{Y, n, B}\ar[d]^-{p_2}\ar@{^{(}->}[r]^-{\delta _1'}& W_{X, n, B}\times B\times W_{Y, n, B}\ar[d]^-{p_{23}}\ar@/^2pc/[dd]^-{p_3}\\
W_{Y, n, B}\ar@{^{(}->}[r]^-j& B\times W_{Y, n, B}\ar[d]^-{p_2}\\
& W_{Y, n, B},}
$$
where $j$ is the graph of $g_{B}'$.
We then have
\begin{align*}
    Rp_{23*}\mls RHom (\delta _{1*}'P, \delta _{1*}'P)&\simeq Rp_{23*}(\delta _{1*}'(\mls RHom (P, P)\otimes ^{\mathbf{L}}(\mathbf{L}\delta _1'^*\delta _{1*}\mls O_{W_{X, n, B}\times W_{Y, n, B}})^\vee )\\
    & \simeq j_*((Rp_{2*}\mls RHom (P, P))\otimes ^{\mathbf{L}}(\mathbf{L}j^*j_*\mls O_{W_{Y, n, B}})^\vee )
\end{align*}
and the natural map
$$
j_*\mls O_{W_{Y, n, B}}\rightarrow j_*((Rp_{2*}\mls RHom (P, P))\otimes ^{\mathbf{L}}(\mathbf{L}j^*j_*\mls O_{W_{Y, n, B}})^\vee )
$$
is an isomorphism in degrees $\leq 0$.   Pushing forward to $W_{Y, n, B}$ we get statements (i) and (ii).  Note that under these identifications  the element $1\in k$ corresponds to the previously constructed isomorphism $\lambda $.  

To complete the proof of it remains to show that the map $\lambda $ satisfies the cocycle condition on $E(f_B', g_B')^2$.  For this note that the preceding argument shows that the map
$$
\mathrm{Hom}_{D(E(f_B', g_B')^2)}(t_{2*}P, t_{0*}P)\rightarrow \mathrm{Hom}_{D(E(f_B', g_B')^0)}(P, P)
$$
inducing by pushing forward along the map $E(f_B', g_B')^2\rightarrow E(f_B', g_B')^0$ given by the unique map $[2]\rightarrow [0]$ is an isomorphims.  Since the pushforward of the map $\lambda $ is the identity on $P$ this implies that the cocycle condition holds.
\end{proof}

\begin{pg} From this and \ref{T:1.1} we conclude that there exists a dense open subset $B\subset \mathbf{P}(V)$ such that the restriction of $P$ to $W_{X, n, B}\times W_{Y, n, B}$ is induced by pushforward from $W_{X, n, B}\times _BW_{Y, n, B}$.  Let $A\subset \mathbf{P}$ be the preimage of $B$ and assume further that $B$ is chosen such that $\mathbf{P}\rightarrow \mathbf{P}(V)$ is \'etale over $B$.  Then (note that with our notation we have $W_{X, A} = W_{X, n, B}$)
$$
W_{X, A}\times _AW_{Y, A}\hookrightarrow W_{X, n, B}\times _BW_{Y, n, B}
$$
is open and closed and a complex on $W_{X, n, B}\times _BW_{Y, n, B}$ inducing $P$ is necessarily supported on $W_{X, A}\times _AW_{Y, A}$ (since we know that $P|_{W_{X, A}\times W_{Y, A}}$ is set-theoretically supported on $W_{X, A}\times _AW_{Y, A}$).  It follows that $P|_{W_{X, A}\times W_{Y, A}}$  is, in fact, the pushforward of a complex on $W_{X, A}\times _AW_{Y, A}$.

This completes the proof of \ref{T:3.4} (b) in the case of infinite $k$.
\end{pg}

\begin{pg}
To handle the case of finite $k$, note the following variant of \ref{L:4.4} above.  For an open subset $A\subset \mathbf{P}$ let $E(f_A, g_A)^\bullet $ be the cosimplicial scheme associated to the maps
$$
f_A:W_{X, A}\rightarrow A, \ \ g_A:W_{Y, A}\rightarrow A.
$$
\end{pg}

\begin{lem}\label{L:4.6} There exists a dense open subset $A\subset \mathbf{P}$ such that the following hold (let $\mathrm{pr}_3:W_{X,A}\times A\times W_{Y, A}\rightarrow W_{Y, A}$ denote the projection to the third factor)
\begin{enumerate}
    \item [(i)]   $R^i\mathrm{pr}_{3*}\mls RHom(\delta _{0*}P, \delta _{1*}P) = 0$ for $i<0$.
    \item [(ii)] The natural maps
$$
\xymatrix{
    R^0\mathrm{pr}_{3*}\mls RHom(\delta _{0*}P, \delta _{1*}P)\ar[r]& R^0\mls Hom(R\mathrm{pr}_{3*}\delta _{0*}P,  R\mathrm{pr}_{3*}\delta _{1*}P)& \mls O_{W_{Y, A}}\ar[l]}
    $$
are isomorphisms, where  the second map is obtained from the identification $\mathrm{pr}_3\circ \delta _1\simeq \mathrm{pr}_3\circ \delta _0$.
\item [(iii)] The map $\lambda :\delta _{0*}P\rightarrow \delta _{1*}P$, obtained from the isomorphisms in (ii) and the section $1\in \Gamma (W_{Y,A}, \mls O_{W_{Y, A}})$  is an isomorphism and satisfies the cocycle condition on $E(f_A, g_A)^2$.
\end{enumerate}
\end{lem}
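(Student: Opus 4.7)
The proof will follow the pattern of \ref{L:4.4}, the new difficulty being that for finite $k$ we cannot choose a subspace $V \subset H_{X, n}$ making $\mathbf{P} \to \mathbf{P}(V)$ finite and generically \'etale. The plan is to recover the crucial input of \ref{L:4.4}---an isomorphism $\delta _{0*} P \simeq \delta _{1*} P$ on $E(f_A, g_A)^1$---by base change to $\bar k$ and descent.

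First, by generic flatness applied to $f\colon W_{X, n} \to \mathbf{P}$ and $g\colon W_{Y, n} \to \mathbf{P}$, I would pick a dense open $A_0 \subset \mathbf{P}$ defined over $k$ over which both $f_{A_0}$ and $g_{A_0}$ are flat, so that $\delta _0$ and $\delta _1$ are closed immersions. Passing to $\bar k$, the field becomes infinite, and one may choose $V \subset H_{X, n, \bar k}$ giving a finite generically \'etale map $\pi\colon \mathbf{P}_{\bar k} \to \mathbf{P}(V)$. The filtered constructions $\mls P_X, \mls P_Y$ built from the Beilinson resolution and the isomorphism between them supplied by \ref{P:3.34} then produce, on a dense open $B \subset \mathbf{P}(V)$ over which $f'_B, g'_B$ are flat and $\pi$ is \'etale, an isomorphism $\delta _{0*}' P_{\bar k} \simeq \delta _{1*}' P_{\bar k}$ on $E(f'_B, g'_B)^1$. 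Pulling back along the \'etale map of cosimplicial schemes $E(f_{A'}, g_{A'})^\bullet \to E(f'_B, g'_B)^\bullet$ (for $A' = \pi^{-1}(B) \cap A_{0, \bar k}$) gives the desired isomorphism over $E(f_{A'}, g_{A'})^1$.

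With this input, the computation from the proof of \ref{L:4.4}(i)-(ii)---using only that $\delta _1 = \mathrm{id} \times j$ is the graph of the flat map $g_A$ together with $Rp_{2*}\mls RHom(P, P) \in D^{\geq 0}$ with $R^0 = \mls O$---applies verbatim over $\bar k$ and yields (i) and (ii) on $E(f_{A'}, g_{A'})^\bullet$. To descend to $k$, I would shrink $A_0$ to a dense open $A \subset A_0$ over $k$ with $A_{\bar k} \subset A'$ (obtained by removing the Galois closure of $\mathbf{P}_{\bar k} \setminus A'$); vanishing and isomorphism of the relevant sheaves on $W_{Y, A}$ then descend from $\bar k$ by faithful flatness of $W_{Y, A} \times_k \bar k \to W_{Y, A}$. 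Property (iii) follows last: $\lambda $ is intrinsically defined from (ii) and the unit section $1 \in \Gamma(W_{Y, A}, \mls O)$, so its formation commutes with base change; over $\bar k$ it coincides with the $\lambda$ of \ref{L:4.4}(iii), and the statements that $\lambda $ is an isomorphism and satisfies the cocycle condition transport via the \'etale map and descend to $k$.

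The main obstacle I anticipate is the bookkeeping---verifying that the isomorphism and the cocycle relation pull back correctly between the two cosimplicial schemes $E(f'_B, g'_B)^\bullet$ and $E(f_{A'}, g_{A'})^\bullet$ along the finite \'etale map $A_{\bar k} \to B$---and the subsequent Galois descent to a dense open $A \subset \mathbf{P}$ defined over $k$.
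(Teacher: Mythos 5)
Your overall strategy matches the paper's: reduce to a field extension of $k$ (where the infinite-field case applies), establish the isomorphism $\delta_{0*}P\simeq\delta_{1*}P$ on $E(f_A,g_A)^1$, run the computation from the proof of \cref{L:4.4} to get (i)--(iii), and descend back to $k$. The one place you diverge is in how you produce the isomorphism $\delta_{0*}P\simeq\delta_{1*}P$ over $\bar k$. The paper uses the \emph{conclusion} of the infinite-field argument, namely that over $\bar k$ there is an open $A$ and a complex $P_0$ on $W_{X,A}\times_A W_{Y,A}$ with $\epsilon_*P_0\simeq P$; then the isomorphism is automatic from $\delta_0\circ\epsilon=\delta_1\circ\epsilon$. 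You instead go back to the Beilinson/filtered construction on $E(f'_B,g'_B)^\bullet$ and try to pull the isomorphism $\lambda$ back along $E(f_{A'},g_{A'})^\bullet\to E(f'_B,g'_B)^\bullet$. This step is more delicate than your write-up suggests: the square relating $\delta_0$ (over $A'$) to $\delta_0'$ (over $B$) is not cartesian when $\pi:\mathbf P\to\mathbf P(V)$ has degree $>1$, so the flat pullback of $\delta_{0*}'P$ is not $\delta_{0*}P$ but rather $\delta_{0*}P$ plus extra open-and-closed components coming from the other sheets of $\pi$. One can discard the extra components using \cref{C:3.17} (the set-theoretic support of $P$ lies in $W_{X,n}\times_{\mathbf P}W_{Y,n}$, which forces the supports to land in the ``diagonal'' piece), but this extra argument is exactly what the paper's shortcut avoids. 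Your descent of the dense open from $\bar k$ to $k$ (via Galois conjugates) and the observation that (i)--(iii) are fppf-local and that $\lambda$ is canonical once (ii) holds are fine and implicit in the paper's phrase ``it suffices to verify the lemma after passing to a field extension of $k$.''
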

\begin{proof}
It suffices to verify the lemma after passing to a field extension of $k$.  By the case of an infinite field we may therefore assume that there exists an open subset $A$ such that $P|_{W_{X, A}\times W_{Y, A}}$ is the pushforward of a complex on $W_{X, A}\times _AW_{Y, A}$.  In particular, we may assume that we have an isomorphism $\delta _{0*}P\simeq \delta _{1*}P$.  The proof now proceeds as in the proof of \ref{L:4.4}.
\end{proof}

Combining this with \ref{T:1.1} we then obtain \ref{T:3.4} (b) in the case of finite $k$ as well.  \qed

\section{Rouquier functors}\label{S:sec4}

In this section we explain how Rouquier's work \cite{rouquier} can be combined with our main result on support of complexes to obtain restrictions on kernels of derived equivalences.  This is also related to work of Lombardi \cite{Lombardi}.

\subsection{The Albanese torsor}

\begin{pg}
Let $k$ be a perfect field and let $X/k$ be a smooth projective variety.  Let $\mls Pic _X$ denote the $\mathbf{G}_m$-gerbe over the Picard scheme $\mathrm{Pic}(X)$ classifying line bundles on $X$, and set
$$
\mls Pic^0_X:= \mathrm{Pic}^0(X)\times _{\mathrm{Pic}(X)}\mls Pic _X.
$$
We assume that $\mathrm{Pic}^0(X)$ is a smooth scheme (this is automatic in characteristic $0$), and therefore an abelian variety, and write $\mathrm{Alb}(X)$ for the dual abelian scheme.
\end{pg}

\begin{pg}
For a smooth projective variety $X/k$ let $\mathbf{T}_X^0$ denote the functor 
which to any $k$-scheme $T$ associates the set of isomorphism classes of morphisms of Picard stacks
$$
s:\mathrm{Pic}_{X, T}^0\rightarrow \mls Pic _{X, T}^0
$$
over the identity.  Observe that any two such sections differ by a morphism of Picard stacks
$$
\rho :\mathrm{Pic}^0_{X, T}\rightarrow B\mathbf{G}_{m, T}.
$$
Considering the commutative diagram
$$
\xymatrix{
\mathrm{Pic}^0_{X, T}\times \mathrm{Pic}^0_{X, T}\ar[d]_{\rho \times \rho }\ar[r]^-m& \mathrm{Pic}^0_{X, T}\ar[d]^-\rho \\
B\mathbf{G}_{m, T}\times B\mathbf{G}_{m, T}\ar[r]^-{m_{B\mathbf{G}_m}}& B\mathbf{G}_{m, T}}
$$
and the fact that for the line bundle $\mls M$ on $B\mathbf{G}_{m, T}$ corresponding to the standard character of $\mathbf{G}_m$ we have 
$$
m_{B\mathbf{G}_{m}}^*(\mls M)\simeq \mls M\boxtimes \mls M
$$
it follows that $\rho $ corresponds to a line bundle $\mls L$ on $\mathrm{Pic}^0_{X, T}$ which is translation invariant; that is, a point of 
$$
\mathrm{Alb}_X:= \mathrm{Pic}^0_{\mathrm{Pic}^0_X}.
$$

Note also that a point $x\in X(k)$ yields a section $s$.  Indeed given $x$ we can interpret $\mathrm{Pic}^0_X$ as classifying pairs $(\mls L, \sigma )$ consisting of a line bundle $\mls L$ on $X$ and a trivialization $\sigma :\mls L(x)\simeq \kappa (x)$.  From this it follows that $\mathbf{T}_X^0$ is a torsor under $\mathrm{Alb}(X)$ and there is a natural morphism
$$
c_X:X\rightarrow \mathbf{T}_X^0.
$$
If we trivialize $\mathbf{T}^0_X$ using a point of $X$ then this is identified 
with the usual map from $X$ to its Albanese.

Note also that we have a canonical isomorphism (this amounts to the fact that 
the translation action of an abelian variety $A$ on $\mathrm{Pic}^0(A)$ is 
trivial)
$$
\mathrm{Pic}^0(\mathbf{T}_X^0)\simeq \mathrm{Pic}^0(\mathrm{Alb}_X)
$$
and therefore  an isomorphism 
$$
\mathrm{Pic}^0({\mathbf{T}_X^0})\simeq \mathrm{Pic}^0(X).
$$
Chasing through these identifications one finds that this is simply given by 
$$
c_X^*:\mathrm{Pic}^0_{\mathbf{T}^0_X}\rightarrow  \mathrm{Pic}_X^0.
$$
\end{pg}

\begin{pg}
We say that an 
autoequivalence
$$
\alpha :\D(X)\rightarrow \D(X)
$$
satisfies the \emph{Rouquier condition $R_X$} if the complex $Q_\alpha \in 
\D(X\times X)$ defining $\alpha $ is isomorphic to $\Gamma _{\sigma *}\mls L$, where $\Gamma 
_\sigma 
:X\rightarrow X\times X$ is the graph $x\mapsto (x, \sigma (x))$ of an 
automorphism $\sigma $ of $X$ and $\mls L$ is an invertible sheaf on $X$ 
numerically equivalent to $0$.

Let $\mls R_X^0$ be the fibered category which to any $k$-scheme $T$ associates 
the groupoid of objects $Q\in \D((X\times X)_T)$ of $T$-perfect complexes such 
that for all geometric points $\bar t\rightarrow T$ the fiber $Q_{\bar t}\in 
\D((X\times X)_{k(\bar t)})$ defines an equivalence $\D(X_{\bar t})\rightarrow 
\D(X_{\bar t})$ satisfying $R_X$ and whose associated automorphism $X_{\bar 
t}\rightarrow X_{\bar t}$ lies in the connected component of the identity in 
$\mathrm{Aut}(X)$.  Let $\mathbf{R}_X^0$ denote the group scheme
$$
\mathbf{R}_X^0:= \mathrm{Pic}^0(X)\times \mathrm{Aut}^0(X).
$$
Then $\mls R_X^0$ is a $\mathbf{G}_m$-gerbe over $\mathbf{R}^0_X$.  
\end{pg}

The key result of Rouquier that we will need is the following:
\begin{thm}[Rouquier] Let $Y/k$ be a second smooth projective variety related to $X$ by an equivalence $\Phi :D(X)\rightarrow D(Y)$.

(i) For any $T/k$ and $Q\in \D((X\times X)_T)$ in $\mls R_X^0(T)$ the complex 
$\Phi \circ Q\circ \Phi ^{-1}\in \D((Y\times Y)_T)$ is in $\mls R_Y^0(T)$.

(ii) The induced functor
\begin{equation}\label{E:gerbeequivalence}
\tilde \tau: \mls R_X^0\rightarrow \mls R_Y^0
\end{equation}
is an equivalence of gerbes.
\end{thm}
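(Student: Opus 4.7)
The plan is to prove (i) by reducing to the absolute setting (over a field) and applying Rouquier's theorem, and then to deduce (ii) by applying (i) symmetrically to $\Phi$ and to $\Phi^{-1}$.

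Given $Q\in \mls R_X^0(T)$, let $P^{-1}\in D((X\times Y)_T)$ denote (the $T$-base change of) the kernel of $\Phi^{-1}$. Form the triple convolution
$$
Q' := \mathrm{pr}_{14*}\bigl(\mathrm{pr}_{12}^*P\otimes^{\mathbf{L}}\mathrm{pr}_{23}^*Q\otimes^{\mathbf{L}}\mathrm{pr}_{34}^*P^{-1}\bigr)\in D((Y\times Y)_T),
$$
which represents $\Phi\circ\alpha_Q\circ\Phi^{-1}$ as an autoequivalence of $D(Y_T)$ and is $T$-perfect because $P$, $P^{-1}$, and $Q$ are. Since $\mls R_Y^0$ is defined by a fiberwise condition on geometric points, it is enough to verify that $Q'_{\bar t}\in \mls R_Y^0(\kappa(\bar t))$ for every geometric $\bar t\to T$, which reduces (i) to the absolute case $T=\Spec k$.

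The absolute statement is precisely the content of Rouquier's theorem~\cite{rouquier}. Rouquier equips the groupoid of Fourier--Mukai autoequivalences of $D(X)$ with the structure of a locally algebraic $\mathbf{G}_m$-gerbe whose component of the identity is canonically $\mls R_X^0$, lying over $\mathbf{R}_X^0$; in particular, membership in $\mls R_X^0$ is characterized intrinsically as deformability through a connected family to the identity autoequivalence. Conjugation by $\Phi$ is an isomorphism of the groupoids of autoequivalences of $D(X)$ and $D(Y)$, with quasi-inverse given by conjugation by $\Phi^{-1}$, and must therefore preserve the component of the identity, yielding the required inclusion $\tilde\tau(\mls R_X^0)\subset \mls R_Y^0$. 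For (ii), the construction with $\Phi^{-1}$ in place of $\Phi$ gives a candidate inverse $\mls R_Y^0\to \mls R_X^0$. The isomorphisms $P\circ P^{-1}\simeq \Delta_{Y*}\mls O_Y$ and $P^{-1}\circ P\simeq \Delta_{X*}\mls O_X$ encoding that $\Phi$ is an equivalence then produce natural isomorphisms exhibiting the two functors as mutually quasi-inverse; compatibility with the $\mathbf{G}_m$-gerbe structure is automatic because the scalar $\mathbf{G}_m$-action on kernels commutes with convolution.

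The main obstacle is the absolute form of Rouquier's theorem itself, which rests on his delicate analysis of gradings on triangulated categories and of bilinear forms on Grothendieck groups; since it is invoked here as a known input we do not reprove it. A secondary technical point, needed to validate the reduction in the first paragraph, is that the triple convolution $Q'$ is well behaved in families, i.e., that the pushforward of a $T$-perfect triple tensor product along $\mathrm{pr}_{14}$ is again $T$-perfect and commutes with base change to each geometric fiber; this is standard given the $T$-perfection of $P$, $P^{-1}$, and $Q$ and the properness of the projections involved.
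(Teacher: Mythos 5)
Your proposal is correct and rests on exactly the same input as the paper: the paper's entire proof is the citation to Rouquier's Theorem~4.18, and your argument likewise defers the substantive content (that conjugation preserves the identity component of the gerbe of Fourier--Mukai autoequivalences) to that result. The fiberwise reduction and the symmetry argument for (ii) that you add are routine and consistent with how $\mls R_X^0$ is defined, though they are largely subsumed in Rouquier's statement itself, which is already formulated over a general base.
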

\begin{proof} See \cite[4.18]{rouquier}.
\end{proof}

 By passing to coarse moduli spaces the equivalence \eqref{E:gerbeequivalence} induces an isomorphism
$$
\tau :\mathbf{R}_X^0\rightarrow \mathbf{R}_Y^0.
$$

\begin{assume}
We assume for the rest of this section that $\mathrm{Pic}^0(X)$ is reduced and that $\tau $ takes $\mathrm{Pic}^0(X)$ to $\mathrm{Pic}^0(Y)$.
\end{assume}

\begin{remark} This assumption holds in many instances of interest.

(i) If $k$ has characteristic $0$ then the assumption that $\mathrm{Pic}^0(X)$ is reduced is automatic.

(ii) The assumption that $\mathrm{Pic}^0(X)$ is reduced implies that it is an abelian variety.  If this holds and furthermore $\mathrm{Aut}^0(Y)$ is affine, then automatically $\mathrm{Pic}^0(X)$ is mapped to $\mathrm{Pic}^0(Y)$.

(iii) In characteristic $0$ the condition that $\mathrm{Pic}^0(X)$ is taken to $\mathrm{Pic}^0(Y)$ can be checked on Hochschild cohomology.   
The map on tangent spaces at the identity of the morphism $\tau $  is a map
$$
T\tau :H^1(X, \mls O_X)\oplus H^0(X, T_X)\rightarrow H^1(Y, \mls O_Y)\oplus 
H^0(Y, T_Y).
$$
Using the HKR isomorphism this map is identified with the map on Hochschild cohomology
$$
HH^1(X)\simeq HH^1(Y).
$$
\end{remark}

\begin{pg}
Under this assumption the map $\tilde \tau $ induces an isomorphism of Picard stacks
$$
\tilde \gamma :\mls Pic _X^0\rightarrow \mls Pic _Y^0
$$
over an isomorphism of abelian varieties
$$
\gamma :\mathrm{Pic}_X^0\rightarrow \mathrm{Pic}^0_Y.
$$
It therefore also induces an isomorphism of torsors of sections
$$
\rho :\mathbf{T}_X^0\rightarrow \mathbf{T}_Y^0
$$
compatible with the isomorphism
$$
\gamma ^t:\mathrm{Alb}_X\rightarrow \mathrm{Alb}_Y.
$$
\end{pg}

The main result of this section is the following:
\begin{thm}\label{T:4.9}
There exists a dense open subset $A\subset \mathbf{T}_Y^0$ such that the restriction of $P$ to $c_{X}^{-1}\rho ^{-1}(A)\times c_Y^{-1}(A)\subset X\times Y$ is in the image of 
$$
D(c_{X}^{-1}\rho ^{-1}(A)\times _Ac_Y^{-1}(A))\rightarrow D(c_{X}^{-1}\rho ^{-1}(A)\times c_Y^{-1}(A)).
$$
\end{thm}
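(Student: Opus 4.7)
The plan is to verify the hypotheses of \cref{T:1.1} for $T=\mathbf{T}_Y^0$, $f=\rho\circ c_X$, $g=c_Y$, after passing to a dense open subset $A\subset \mathbf{T}_Y^0$.  The central task is to construct an isomorphism
$$
\varphi\colon \delta_{0*}P \simto \delta_{1*}P \qquad \text{on } X\times_k \mathbf{T}_Y^0\times_k Y
$$
satisfying $\mathrm{pr}_{13*}\varphi=\mathrm{id}_P$ and the cocycle condition on the triple product.

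I would construct $\varphi$ by promoting Rouquier's compatibility to families.  For each $L\in \mathrm{Pic}^0(X)$ the proof of \cref{P:3.0.8} provides a canonical isomorphism $P\otimes p_X^*L \simeq P\otimes p_Y^*\gamma(L)$.  Running this argument universally over the base $\mls Pic^0_X$, with input the (twisted) Poincar\'e line bundle $\mls M_X$ on $X\times \mls Pic^0_X$ and its counterpart $\mls M_Y$ on $Y\times \mls Pic^0_Y$, should produce a canonical isomorphism on $X\times_k \mls Pic^0_X\times_k Y$ between $p_{XY}^*P\otimes p_{X,\mathrm{Pic}}^*\mls M_X$ and $p_{XY}^*P\otimes p_{Y,\mathrm{Pic}}^*\tilde\gamma^*\mls M_Y$.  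Since $\mathbf{T}_X^0$ parametrizes sections of $\mls Pic^0_X$ and $c_X$ is the ``evaluate at $x$'' section, I can transport this universal isomorphism to $X\times_k \mathbf{T}_Y^0\times_k Y$ using the data of $c_X$, $\rho$, and $c_Y$, producing the desired $\varphi$.

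The cocycle condition on $\varphi$ follows from the fact that $\tilde\tau$ is a morphism of Picard stacks and therefore multiplicative with respect to composition of autoequivalences.  The normalization $\mathrm{pr}_{13*}\varphi=\mathrm{id}_P$ comes from restriction to the identity section of $\mls Pic^0_X$, where the Poincar\'e bundle trivializes.  Perfection of $P$ is automatic since $\Phi$ is an equivalence between smooth projective varieties, and the condition $R\mathrm{pr}_{1*}\mls RHom(P,P)\in D^{\geq 0}(X)$ is the standard cohomological vanishing for kernels of derived equivalences, already invoked in the proof of \cref{P:3.34}.  The flatness hypothesis of \cref{T:1.1} I would arrange by choosing $A\subset \mathbf{T}_Y^0$ a dense open subset over which $c_Y^{-1}(A)\to A$ is flat, provided by generic flatness applied to $c_Y$.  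Applying \cref{T:1.1} over this $A$ then yields the desired descent $P_0\in D(c_X^{-1}\rho^{-1}(A)\times_A c_Y^{-1}(A))$ recovering $P$ on $c_X^{-1}\rho^{-1}(A)\times c_Y^{-1}(A)$.

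The principal obstacle is the construction of $\varphi$ in families.  This requires lifting Rouquier's gerbe equivalence $\tilde\gamma$ (which a priori governs only tensoring by individual line bundles in $\mathrm{Pic}^0$) to a compatibility of Fourier--Mukai kernels parametrized by $\mls Pic^0_X$, and then translating this compatibility to the Albanese torsor via the interpretation of $\mathbf{T}_X^0$ as the space of sections of $\mls Pic^0_X\to \mathrm{Pic}^0_X$.  The careful tracking of gerbe twists and the identification of the two pullbacks along $\delta_0$ and $\delta_1$ with the evaluations of $\mls M_X$ and $\tilde\gamma^*\mls M_Y$ on the relevant slices is the main bookkeeping challenge; once done, the remainder of the argument is formal application of \cref{T:1.1}.
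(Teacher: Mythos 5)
Your proposal follows the same overall strategy as the paper: reduce to an application of \cref{T:1.1} over a generic open $A$, with the essential content being the construction of the gluing isomorphism $\varphi\colon \delta_{0*}P\simto\delta_{1*}P$ via the universal line bundle on $X\times\mls Pic^0_X$ and the gerbe equivalence, then translated to the Albanese torsor using its interpretation as the space of sections of $\mls Pic^0_X\to\mathrm{Pic}^0_X$. This is exactly what the paper does via \cref{L:tensorfromT} (the paper's intermediate lemma), so you have identified the right mechanism; you are correct that this is the main bookkeeping burden.

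There is one genuine divergence worth flagging. You propose to establish the cocycle condition for $\varphi$ by appealing to the multiplicativity of $\tilde\tau$ as a morphism of Picard stacks. The paper instead avoids having to verify the cocycle relation directly: it shows (by the same $R\mathrm{pr}_{3*}\rshom$ computation used in \cref{L:4.4}) that the restriction map
$$
\mathrm{Hom}_{E(f_A,g_A)^2}(t_{2*}P,\,t_{0*}P)\longrightarrow \mathrm{Hom}(P,P)
$$
induced by $[2]\to[0]$ is injective (in fact an isomorphism), so once one has normalized $\mathrm{pr}_{13*}\varphi=\mathrm{id}_P$, the cocycle relation is automatic. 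This sidesteps the delicate gerbe-level compatibility you would otherwise have to chase, and is worth adopting: your route via multiplicativity is morally right but asks you to check a compatibility of isomorphisms of $2$-functors which the paper's argument renders unnecessary.

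Two smaller points. First, the paper finds it convenient to pass through the scheme-theoretic image $W_X\subset\mathbf{T}_X^0$ of $c_X$, showing via the support lemma that $\rho$ restricts to an isomorphism $W_X\simeq W_Y$, and then takes $T=W_X$ rather than all of $\mathbf{T}_Y^0$; this is essentially equivalent to your shrinking to $A$ but keeps the target irreducible and makes the generic flatness and support statements cleaner. Second, your normalization of $\varphi$ (``restriction to the identity section'') needs to be matched carefully with the normalization of the Poincar\'e bundle, and the paper does this by choosing $\mathcal F = u_*\mls O_{W_X}$ for $u$ the graph of the inclusion $W_X\hookrightarrow\mathbf{T}_X^0$ in \cref{L:tensorfromT}; this is the specific incarnation of your idea.
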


The proof occupies the remainder of the section.

\begin{lem}
Let $\mls L_X^u$ (resp. $\mls L_Y^u$) be the universal line bundle on $X\times \mls Pic^0_X$ (resp. $\mls Pic ^0_Y\times Y$).   Then we have a canonical isomorphism
\begin{equation}\label{E:tautiso}
p_{12}^*\mls L_X^u\lotimes p_{13}^*P\simeq (1_X\times \tilde \gamma \times 1_Y)^*(p_{23}^*\mls L_Y^u\lotimes p_{13}^*P)
\end{equation}
in $D(X\times \mls Pic _X^0\times Y)$.
\end{lem}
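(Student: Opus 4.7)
The plan is to view the claimed isomorphism as the universal version over $\mls Pic^0_X$ of the fiberwise statement that for each $L\in \mathrm{Pic}^0(X)$, with $L':=\gamma(L)\in \mathrm{Pic}^0(Y)$, there is an isomorphism $P\otimes p_X^*L\simeq P\otimes p_Y^*L'$ in $D(X\times Y)$. This fiberwise statement is essentially a restatement of Rouquier's theorem: the autoequivalence $T_L:=(\_\otimes L)$ of $\D(X)$ is an object of $\mls R_X^0$, and under the assumption that $\tilde\tau$ sends the Picard subgerbe to the Picard subgerbe (inducing $\tilde\gamma$), conjugation by $\Phi$ identifies $T_L$ with $T_{L'}$. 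Unwinding in terms of kernels, the kernel of $\Phi\circ T_L$ on $X\times Y$ is $P\otimes p_X^*L$ while the kernel of $T_{L'}\circ\Phi$ is $P\otimes p_Y^*L'$, so Rouquier's theorem gives the pointwise isomorphism.

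To upgrade to the family statement, I would apply Rouquier's theorem relatively, with test scheme $T=\mls Pic^0_X$. The universal line bundle $\mls L_X^u$ gives a family of autoequivalences of $\D(X)$ parametrized by $\mls Pic^0_X$, with universal kernel $(\Delta_X\times\mathrm{id})_*\mls L_X^u\in \D(X\times X\times \mls Pic^0_X)$, which defines an object of $\mls R_X^0(\mls Pic^0_X)$. By the assumption on $\tilde\tau$ together with \eqref{E:gerbeequivalence}, this is sent by conjugation with $\Phi$ to the family of autoequivalences of $\D(Y)$ parametrized by $\mls Pic^0_X$ given by tensor product with $(\tilde\gamma\times\mathrm{id}_Y)^*\mls L_Y^u$. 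Translating this equivalence of families of functors $\D(X)\to\D(Y)$ into an equality of their kernels on $X\times\mls Pic^0_X\times Y$ yields exactly the stated isomorphism
$$
p_{12}^*\mls L_X^u\lotimes p_{13}^*P\;\simeq\; p_{13}^*P\lotimes p_{23}^*(\tilde\gamma\times\mathrm{id}_Y)^*\mls L_Y^u\;=\;(1_X\times\tilde\gamma\times 1_Y)^*(p_{23}^*\mls L_Y^u\lotimes p_{13}^*P).
$$

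The main obstacle is \emph{canonicity} of the isomorphism: since $\mls R_X^0$ is a $\mathbf{G}_m$-gerbe, at each point of $\mls Pic^0_X$ the isomorphism of kernels exists only up to a scalar, so globally over $\mls Pic^0_X$ we a priori only get the isomorphism up to a line bundle pulled back from $\mls Pic^0_X$. The canonical choice is pinned down by the fact that $\tilde\tau$ is an equivalence of gerbes and that the universal line bundles $\mls L_X^u$, $\mls L_Y^u$ provide preferred trivializations of the respective Picard subgerbes; hence their pullbacks are matched by $\tilde\tau$ up to a canonical isomorphism. Alternatively, mirroring the strategy used in the proof of \cref{P:3.34}, one can appeal to Toën's theorem to lift $\Phi$ to a dg equivalence $\widetilde\Phi$, construct the universal comparison isomorphism at the level of dg functors, where it is unique, and descend to derived categories; this avoids any gerbe bookkeeping and directly produces the required canonical isomorphism \eqref{E:tautiso}.
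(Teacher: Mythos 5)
Your proposal has the same conceptual starting point as the paper's proof: both rest on Rouquier's theorem (equivalently, the characterization of $\tilde\gamma$) applied to the universal family of line bundles, and both ultimately need to pass from an isomorphism of kernels on $(Y\times Y)_{\mls P_Y}$ to one on $X\times\mls Pic^0_X\times Y$. However, the key step in your argument --- ``translating this equivalence of families of functors into an equality of their kernels on $X\times\mls Pic^0_X\times Y$'' --- is where all the work actually lives, and you leave it unexplained. Rouquier's theorem, as stated, gives an isomorphism $\Phi\circ(\otimes\mls L_X^u)\circ\Phi^{-1}\simeq\Delta_{Y*}\mls L_Y^u$ in $D((Y\times Y)_{\mls P_Y})$; the lemma wants an isomorphism of the kernels of $\Phi\circ(\otimes\mls L_X^u)$ and $(\otimes\mls L_Y^u)\circ\Phi$ in $D(X\times\mls Pic^0_X\times Y)$, i.e.\ with one $\Phi^{-1}$ ``cancelled.'' What you are implicitly invoking is associativity of kernel convolution together with the fact that $P^\vee\star P\simeq\Delta_{X*}\mathcal O_X$ in families, but you should say this. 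The paper instead carries out the cancellation concretely: it rewrites the Rouquier isomorphism as a map using Grothendieck duality ($p_{13}^!\simeq p_{13}^*(-)\otimes\omega[\dim]$), uses the Serre-duality identity $P\otimes\omega_Y[\dim Y]\simeq P\otimes\omega_X[\dim X]$ to move to $X\times\mls Pic^0_X\times Y$, and then verifies the resulting map is an isomorphism fiberwise via Orlov's uniqueness of kernels.

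On the canonicity worry: you frame the problem as the isomorphism being defined ``only up to a line bundle pulled back from $\mls Pic^0_X$'' because $\mls R_X^0$ is a $\mathbf G_m$-gerbe. But note the lemma is stated over the gerbe $\mls Pic^0_X$ (not over the coarse space $\mathrm{Pic}^0(X)$), which is precisely the object that classifies these scalar ambiguities, and $\mls L_X^u$ is the universal bundle there; so the gerbe structure is already baked into the statement rather than being an extra obstacle. The paper sidesteps any such ambiguity simply by constructing a specific map via the duality/adjunction chain and checking it is an isomorphism fiberwise. Your alternative of lifting to dg categories via Toën, as in the proof of \cref{P:3.34}, would also pin down a canonical isomorphism and is a legitimate different route, but it is not what the paper does and would need to be carried out rather than merely invoked.
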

\begin{proof}
To ease notation let us write $\mls P_X$ (resp. $\mls P_Y$) for $\mls Pic _X^0$ (resp. $\mls Pic _Y^0$).  The isomorphism $\tilde \gamma $ is characterized by the condition that the complex
\begin{equation}\label{E:5.18.1}
\Phi \circ (\otimes \mls L_X^u)\circ \Phi ^{-1} \in D((Y\times Y)_{\mls P_Y})
\end{equation}
is isomorphic to $\Delta _{Y*}\mls L_Y^u$. 
Consider the cartesian square
$$
\xymatrix{
(X\times Y)_{\mls P_Y}\ar[d]^-{p_2}\ar[r]^-{(x, y)\mapsto (y, x, y)}& (Y\times X\times Y)_{\mls P_Y}\ar[d]^-{p_{13}}\\
Y_{\mls P_Y}\ar[r]^-{\Delta _Y}& (Y\times Y)_{\mls P_Y}.}
$$
Then \eqref{E:5.18.1} is represented by the complex
$$
Rp_{13*}(p_2^*\mls L_X^u\lotimes p_{12}^*P\lotimes p_{23}^*P^\vee \otimes p_3^*\omega _Y[\mathrm{dim}(Y)])\in D((Y\times Y)_{\mls P_Y}).
$$
By Grothendieck duality and the isomorphism 
$$
p_{13}^!(-)\simeq p_{13}^*(-)\lotimes \omega _X[\mathrm{dim}(X)]
$$
we find that the characterizing isomorphism 
$$
Rp_{13*}(p_2^*\mls L_X^u\lotimes p_{12}^*P\lotimes p_{23}^*P^\vee \otimes p_3^*\omega _Y[\mathrm{dim}(Y)])\simeq \Delta _{Y*}\mls L_Y^u
$$
corresponds by adjunction to a morphism
$$
(p_2^*\mls L_X^u\lotimes p_{12}^*P\lotimes p_{23}^*P^\vee \otimes p_3^*\omega _Y[\mathrm{dim}(Y)])\rightarrow p_{13}^*(\Delta _{Y*}(1_Y\times \tilde \gamma )^*\mls L_Y^u)\lotimes p_2^*\omega _X[\mathrm{dim}(X)]
$$
in $D((Y\times X\times Y)_{\mls P_X})$.  
Using the isomorphism $P\otimes \omega _Y[\mathrm{dim}(Y)]\simeq P\otimes \omega _X[\mathrm{dim}(X)]$ and adjunction this, in turn, corresponds to  a map 
$$
p_{12}^*\mls L_X^u\lotimes p_{13}^*P\rightarrow  (1_X\times \tilde \gamma \times 1_Y)^*(p_{23}^*\mls L_Y^u\lotimes p_{13}^*P)
$$
in $D(X\times \mls Pic _X^0\times Y)$.
This map is an isomorphism, since it can be verified in each of the fibers where it holds by Orlov's theorem and the fact that they both determine the same functor.
\end{proof}
\begin{lem}\label{L:tensorfromT} Let $S$ be a  noetherian scheme and let $\mls F\in D(\mathbf{T}_{X, S}^0)$ be a complex with associated complex $\mls F^\rho \in D(\mathbf{T}_{Y, S}^0)$.  Then we have
\begin{equation}\label{E:3.1.1}
p_{12}^*c_{X, S}^*\mls F\lotimes p_{13}^*P\simeq p_{23}^*c_{Y, S}^*\mls F^\rho \lotimes p_{13}^*P
\end{equation}
in $D(X\times S\times Y)$.
\end{lem}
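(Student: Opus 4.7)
The plan is to reduce \eqref{E:3.1.1} to the preceding isomorphism \eqref{E:tautiso} via a Fourier--Mukai argument over the Albanese torsor.

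I would begin by observing that both sides of \eqref{E:3.1.1} define continuous triangulated functors of $\mls F \in D(\mathbf{T}_{X,S}^0)$: each is pullback of $\mls F$ followed by tensoring with the fixed object $p_{13}^*P$. Hence the collection of $\mls F$ for which the isomorphism holds is a triangulated subcategory closed under small colimits, and it suffices to verify the isomorphism on a generating set.

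Next, I would introduce the natural (twisted) Poincaré bundle $\mls Q^X$ on $\mathbf{T}_X^0 \times \mls Pic_X^0$ arising from the universal section classifying $\mathbf{T}_X^0$, so that by construction $(c_X \times \mathrm{id})^* \mls Q^X \simeq \mls L_X^u$; define $\mls Q^Y$ analogously. Since $\rho$ and $\tilde\gamma$ are both induced by the equivalence of gerbes $\tilde\tau$, unwinding the constructions yields a canonical compatibility $(\rho \times \tilde\gamma)^* \mls Q^Y \simeq \mls Q^X$. A twisted Fourier--Mukai argument, adapted to the torsor $\mathbf{T}_X^0$ over the abelian variety $\mathrm{Alb}(X)$, expresses every $\mls F \in D(\mathbf{T}_{X,S}^0)$ as
$$
\mls F \simeq Rq_*(p^* \mls G \lotimes \mls Q^X)
$$
for some $\mls G \in D(\mls Pic_{X,S}^0)$, where $p,q$ are the projections from $\mathbf{T}_{X,S}^0 \times_S \mls Pic_{X,S}^0$. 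Substituting this into the left-hand side of \eqref{E:3.1.1}, using flat base change to pull $c_{X,S}^*$ through $Rq_*$, then the projection formula to move $p_{13}^*P$ inside the pushforward, and finally applying \eqref{E:tautiso} to replace $p_{12}^*\mls L_X^u \lotimes p_{13}^*P$ by its $Y$-counterpart, produces the right-hand side of \eqref{E:3.1.1} after identifying $\mls F^\rho$ with the Fourier--Mukai transform of $\tilde\gamma_*\mls G$ against $\mls Q^Y$ via the compatibility above.

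The main obstacle is bookkeeping the $\mathbf{G}_m$-gerbe structure on $\mls Pic_X^0 \to \mathrm{Pic}^0_X$: both $\mls L_X^u$ and $\mls Q^X$ are only defined as twisted line bundles, and the Fourier--Mukai transform must be interpreted in the twisted sense. The delicate point is checking that the intertwiner $(\rho \times \tilde\gamma)^* \mls Q^Y \simeq \mls Q^X$ respects these gerbe twists, so that the $X$- and $Y$-side transforms are genuinely exchanged by $\rho_*$. Once that is settled, the remaining manipulations of pullbacks, pushforwards, and tensor products are formal.
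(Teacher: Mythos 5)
Your proposal follows the same overall strategy as the paper: express $\mls F$ as a Fourier--Mukai transform from the dual, substitute, and reduce to the ``universal'' isomorphism \eqref{E:tautiso} on $X\times \mls Pic^0_X\times Y$.  The one substantive difference is where the Fourier--Mukai transform lives.  You set it up over the gerbe $\mls Pic^0_X$ with a twisted Poincar\'e bundle $\mls Q^X$, and correctly flag that the resulting twisted bookkeeping --- matching $\mathbf{G}_m$-twists across $(\rho\times\tilde\gamma)^*\mls Q^Y\simeq\mls Q^X$ --- is the delicate part.  The paper sidesteps this entirely by observing that $\mathbf{T}_X^0$ is \emph{by definition} the torsor of sections of $\mls Pic^0_{\mathbf{T}_X^0}\to\mathrm{Pic}^0_{\mathbf{T}_X^0}$, so over $\mathbf{T}_X^0$ there is an honest (untwisted) universal line bundle $\mls L_X^u$ on $\mathrm{Pic}^0_{\mathbf{T}_X^0}\times\mathbf{T}_X^0$ and one gets a Fourier--Mukai equivalence $D(\mathrm{Pic}^0_{X,S})\simeq D(\mathbf{T}^0_{X,S})$ of \emph{scheme} derived categories, checked after a field extension trivializing the torsor and reducing to the classical abelian-variety case.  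Working over $\mathrm{Pic}^0$ rather than $\mls Pic^0$ makes the ``delicate point'' you identify vanish.  One small redundancy in your write-up: the opening paragraph about continuity and generating sets does no work, since in the next step you directly express an arbitrary $\mls F$ in Fourier--Mukai form and the argument then applies verbatim to every object.
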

\begin{proof}
Note that the diagram
$$
\xymatrix{
\mls Pic ^0_{\mathbf{T}_X^0}\ar[d]\ar[r]^-{c_X^*}& \mls Pic ^0_{X}\ar[d]\\
\mathrm{Pic}_{\mathbf{T}_X^0}^0\ar[r]^-{c_X^*}& \mathrm{Pic}^0_X}
$$
is cartesian, and identifies $\mathbf{T}_X^0$ with the $\mathbf{G}_m$-torsor of sections of 
$$
\mls Pic ^0_{\mathbf{T}_X^0}\rightarrow \mathrm{Pic}_{\mathbf{T}_X^0}^0.
$$
In particular, there is a universal line bundle $\mls L_X^u$ on $\mathrm{Pic}_{\mathbf{T}_X^0}^0\times \mathbf{T}_X^0.$  Similarly there is a universal line bundle $\mls L_Y^u$ over $\mathrm{Pic}_{\mathbf{T}_Y^0}^0\times \mathbf{T}_Y^0$, and the isomorphism
$$
(\rho ^*\times \rho ):\mathrm{Pic}_{\mathbf{T}_X^0}^0\times \mathbf{T}_X^0\rightarrow \mathrm{Pic}_{\mathbf{T}_Y^0}^0\times \mathbf{T}_Y^0
$$
comes equipped with an isomorphism
$$
(\rho ^*\times \rho )^*\mls L_Y^u\simeq \mls L_X^u.
$$
The functor
$$
D(\mathrm{Pic}_{X, S}^0)\rightarrow D(\mathbf{T}_{X, S}^0), \ \ \mls G\mapsto Rp_{2*}(p_1^*\mls G\otimes p_2^*\mls L_X^u)
$$
is an equivalence of categories.  Indeed this can be verified after making a field extension, where it reduces to the standard derived equivalence between an abelian variety and its dual.  In particular, we can write
$$
\mls F = Rp_{2*}(p_1^*\mls G\otimes p_2^*\mls L_X^u)
$$
for a unique object $\mls G\in D(\mathrm{Pic}^0_{X, S})$.  Note also that if $\mls G^\rho \in D(\mathrm{Pic}^0_{Y, S}))$ is the complex corresponding to $\mls G$ under the isomorphism 
$$
\rho ^*:\mathrm{Pic}_{X}^0\rightarrow \mathrm{Pic}_Y^0
$$
induced by $\rho $, then $\mls G^\rho $ transforms to $\mls F^\rho $ on $\mathbf{T}_Y^0$ under the equivalence defined by $\mls L_Y^u$.

Consider the diagram
$$
\xymatrix{
X\times \mathrm{Pic}^0_{\mathbf{T}_X^0}\times S\times Y\ar[d]\ar[r]& X\times S\times Y\ar[r]\ar[d]& X\times Y\\
X\times \mathrm{Pic}^0_{\mathbf{T}_X^0}\times S\ar[d]\ar[r]& X\times S\ar[d]& \\
\mathbf{T}^0_X\times \mathrm{Pic}^0_{\mathbf{T}_X^0}\times S\ar[d]\ar[r]& \mathbf{T}_X^0\times S& \\
\mathrm{Pic}^0(\mathbf{T}_X^0)\times S.
}
$$
From this we see that the complex on the left side of \eqref{E:3.1.1} is isomorphic to the complex
$$
Rp_{134*}(p_{14}^*P\otimes p_{12}^*\mls L_X^u\otimes p_{23}^*\mls G).
$$
Using the isomorphism \eqref{E:tautiso} we find that the image of
$$
p_{14}^*P\otimes p_{12}^*\mls L_X^u\otimes p_{23}^*\mls G
$$
in 
$$
D(X\times \mathrm{Pic}^0(\mathbf{T}_Y^0)\times S\times Y)
$$
is equal to
$$
p_{14}^*P\otimes p_{24}^*\mls L_Y^u\otimes p_{23}^*\mls G.
$$
From this the result follows.
\end{proof}

\begin{lem}\label{L:support} Let $x\in X$ be a point with image $z\in \mathbf{T}_X^0$.  Then the complex $P_{x}\in Y_{\kappa (x)}$ is set-theoretically supported on  $c_{Y}^{-1}(\rho (z)).$
\end{lem}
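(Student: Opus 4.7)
The plan is to apply \cref{L:tensorfromT} twice with well-chosen complexes on $\mathbf T_X^0$ and compare the resulting set-theoretic supports on $X\times Y$. Take $S = \Spec k$, set $z = c_X(x)$, and pick $y\in Y$ with $(x,y)\in \Supp(P)$; I must show $c_Y(y) = \rho(z)$.

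For any reduced closed subscheme $Z\subset \mathbf T_X^0$ I take $\mls F = i_{Z*}\mls O_Z$. Because $\rho\colon \mathbf T_X^0\simto \mathbf T_Y^0$ is an isomorphism, $\mls F^\rho \simeq i_{\rho(Z)*}\mls O_{\rho(Z)}$, so \cref{L:tensorfromT} furnishes an isomorphism
\begin{equation*}
p_1^* c_X^*\mls F\otimes^{\mathbf L} P\;\simeq\;p_2^* c_Y^*\mls F^\rho\otimes^{\mathbf L} P
\end{equation*}
in $D(X\times Y)$. Both sides are bounded complexes with coherent cohomology --- smoothness of $X$, $Y$, $\mathbf T_X^0$, $\mathbf T_Y^0$ gives $c_X$, $c_Y$ finite Tor-dimension, and $P$ is perfect. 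The standard identity $\Supp(A\otimes^{\mathbf L}B) = \Supp A\cap \Supp B$ for such complexes (whose non-trivial direction comes from the fact that $H^{\mathrm{top}}(A\otimes^{\mathbf L}B) = H^{\mathrm{top}}(A)\otimes H^{\mathrm{top}}(B)$, which is nonzero at any common support point by Nakayama) therefore yields
\begin{equation*}
\bigl(c_X^{-1}(Z)\times Y\bigr)\cap \Supp(P)\;=\;\bigl(X\times c_Y^{-1}(\rho(Z))\bigr)\cap \Supp(P).
\end{equation*}

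I would then use this equality for two choices of $Z$. Taking $Z_1 = \overline{\{z\}}$: the inclusion $x\in c_X^{-1}(Z_1)$ places $(x,y)$ in the left-hand side, hence in the right, whence $c_Y(y)\in \overline{\{\rho(z)\}}$. Taking $Z_2 = \overline{\{\rho^{-1}(c_Y(y))\}}$: here $\rho(Z_2) = \overline{\{c_Y(y)\}}$ contains $c_Y(y)$ tautologically, so $(x,y)$ sits in the right-hand side, hence in the left, forcing $z\in \overline{\{\rho^{-1}(c_Y(y))\}}$, which via the homeomorphism $\rho$ reads $\rho(z)\in \overline{\{c_Y(y)\}}$. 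Thus $\rho(z)$ and $c_Y(y)$ each lie in the closure of the other in the sober scheme $\mathbf T_Y^0$, forcing $\rho(z) = c_Y(y)$ and so $y\in c_Y^{-1}(\rho(z))$, as required.

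The expected main obstacle is the handling of non-closed points. A single naive application of the lemma with $\mls F = \kappa(z)$ would give only the containment $c_Y(y)\in\overline{\{\rho(z)\}}$, because the pushforward skyscraper at a non-closed point has support equal to the full closure of that point. The symmetric second choice $Z_2$, which encodes the position of $y$ on the $X$-side by pulling back along $\rho^{-1}$, is precisely what promotes the one-sided containment into the mutual specialization required for sobriety of the target scheme to collapse everything to equality.
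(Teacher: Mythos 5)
Your proof is correct, but it takes a genuinely different route from the paper's. The paper's own argument is shorter: it first makes a scalar extension from $k$ to $\kappa(x)$, so that $x$ becomes a rational (hence closed) point and $z = c_X(x)$ along with it; then a \emph{single} application of \cref{L:tensorfromT} with $\mls F = \kappa(z)$ gives $\Supp(P_{c_X^{-1}(z)}) = \Supp(P|_{X\times c_Y^{-1}(\rho(z))})$, and the containment is immediate since the skyscraper at a closed point has support exactly $\{z\}$. This base change step is precisely the device that neutralizes the concern you correctly identify --- that over the original $k$ the skyscraper at a non-closed $z$ only has support $\overline{\{z\}}$, yielding one-sided specialization. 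Your alternative deals with this obstacle head-on without extending scalars: you apply \cref{L:tensorfromT} twice, once with $Z_1 = \overline{\{z\}}$ to get $c_Y(y)\in\overline{\{\rho(z)\}}$ and once with the reflected $Z_2 = \overline{\{\rho^{-1}(c_Y(y))\}}$ to get $\rho(z)\in\overline{\{c_Y(y)\}}$, then invoke $T_0$ to conclude equality. Both routes are sound; the paper's is more economical once one accepts the reduction to rational points, whereas yours avoids the base change at the cost of a second application of the lemma and the mutual-specialization argument. Your auxiliary facts (that $\mls F^\rho \simeq \rho_*\mls F = i_{\rho(Z)*}\mls O_{\rho(Z)}$ for an isomorphism $\rho$, that $c_X$ has finite Tor-dimension so $\mathbf L c_X^* i_{Z*}\mls O_Z$ is bounded coherent with support $c_X^{-1}(Z)$, and the support identity $\Supp(A\otimes^{\mathbf L}B)=\Supp A\cap\Supp B$ via Nakayama on the top cohomology) are all correctly deployed.
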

\begin{proof}
After making the field extension from $k$ to $\kappa (x)$, we may assume that $x$ is a $k$-rational point.

The support of $P_x$ in $X\times Y$ is contained in the support of 
$$
P_{c_X^{-1}(z)} = P|_{c_X^{-1}(z)\times Y},
$$
so it suffices to show that the support of $P_{c_X^{-1}(z)}$ is contained in $X\times c_Y^{-1}(\rho (z))$.

For this apply \cref{L:tensorfromT} with $S =\Sp (k)$ and $\mls F$ the skyscraper sheaf $\kappa (z)$ on $\mathbf{T}_X^0$.  We then find that the support of  $P_{c_X^{-1}(z)}$  is equal to the support of 
$$
P|_{X\times c_Y^{-1}(\rho (z))}.
$$
\end{proof}

\begin{pg} Let $W_X\subset \mathbf{T}_X^0$ (resp. $W_Y\subset \mathbf{T}_Y^0$) be the scheme-theoretic image of $c_X$ (resp. $c_Y$).  If $x\in X$ is a point then it follows from \cref{L:support} that $c_Y^{-1}(\rho (c_X(x)))$ is nonempty; that is, $\rho (c_X(x))\in W_Y$.  Since $W_X$ and $W_Y$ are integral it follows that $\rho $ restricts to a morphism
\begin{equation}\label{E:Wiso}
W_X\rightarrow W_Y,
\end{equation}
which we again denote by $\rho $.  By considering the inverse transform we see that this map is an isomorphism.  
\end{pg}

\begin{pg}
Let $f:X\rightarrow W_X$ be the map induced by $c_X$, and let 
$$
g:Y\rightarrow W_X
$$
denote the composition of $c_Y:Y\rightarrow W_Y$ with the inverse of \eqref{E:Wiso}, and let $E^\bullet $ be the associated cosimplicial scheme as in \ref{P:setup}.  Applying \ref{L:tensorfromT} with $S = W_X$ and $\mls F$ the sheaf $u _*\mls O_{W_X}$, where $u :W_X\rightarrow \mathbf{T}_X^0\times W_X$ is the graph of the inclusion, we find that on 
$$
E^1 = X\times W_X\times Y
$$
we have
$$
\delta _{0*}P\simeq \delta _{1*}P.
$$
\end{pg}
\begin{pg}
Having established the existence of this isomorphism we can proceed as in the case of the canonical fibration.  Namely if $A\subset W_X$ is an open subset over which $f$ and $g$ are flat, and 
$$
f_A:f^{-1}(A)\rightarrow A, \ \ g_A:g^{-1}(A)\rightarrow A
$$
are the restrictions, then the same argument shows that the map
$$
\mathrm{Hom}_{E(f_A, g_A)^2}(t_{2*}P, t_{0*}P)\rightarrow \mathrm{Hom}_{f^{-1}(A)\times g^{-1}(A)}(P, P),
$$
induced by the surjection $[2]\rightarrow [0]$,  is an isomorphism.   From this it follows that the isomorphism $\delta _{0*}P\simeq \delta _{1*}P$ satisfies the cocycle condition, after restriction to $A$. Theorem \ref{T:4.9} then follows using \ref{T:1.1}. \qed
\end{pg}

\bibliographystyle{amsplain}
\bibliography{bibliography}{}

\providecommand{\bysame}{\leavevmode\hbox to3em{\hrulefill}\thinspace}
\providecommand{\MR}{\relax\ifhmode\unskip\space\fi MR }
\providecommand{\MRhref}[2]{%
  \href{http://www.ams.org/mathscinet-getitem?mr=#1}{#2}
}
\providecommand{\href}[2]{#2}
\begin{thebibliography}{10}

\bibitem{BZFN}
D.~Ben-Zvi, J.~Francis, and D.~Nadler, \emph{Integral transforms and {D}rinfeld
  centers in derived algebraic geometry}, J. Amer. Math. Soc. \textbf{23}
  (2010), no.~4, 909--966. \MR{2669705}

\bibitem{Beilinsonresolution}
A.~A. Be\u{\i}linson, \emph{Coherent sheaves on {${\bf P}^{n}$} and problems in
  linear algebra}, Funktsional. Anal. i Prilozhen. \textbf{12} (1978), no.~3,
  68--69. \MR{509388}

\bibitem{BCHM}
C.~Birkar, P.~Cascini, C.~Hacon, and J.~McKernan, \emph{Existence of minimal
  models for varieties of log general type}, J. Amer. Math. Soc. \textbf{23}
  (2010), no.~2, 405--468. \MR{2601039}

\bibitem{bondalorlov01}
A.~Bondal and D.~Orlov, \emph{Reconstruction of a variety from the derived
  category and groups of autoequivalences}, Compositio Math. \textbf{125}
  (2001), no.~3, 327--344. \MR{1818984}

\bibitem{alterations}
A.~J. de~Jong, \emph{Smoothness, semi-stability and alterations}, Inst. Hautes
  \'{E}tudes Sci. Publ. Math. (1996), no.~83, 51--93. \MR{1423020}

\bibitem{huybrechtsFM}
D.~Huybrechts, \emph{Fourier-{M}ukai transforms in algebraic geometry}, Oxford
  Mathematical Monographs, The Clarendon Press, Oxford University Press,
  Oxford, 2006. \MR{2244106}

\bibitem{Illusie}
L.~Illusie, \emph{Complexe cotangent et d\'{e}formations. {I}}, Lecture Notes
  in Mathematics, Vol. 239, Springer-Verlag, Berlin-New York, 1971.
  \MR{0491680}

\bibitem{lieblicholsson}
M.~Lieblich and M.~Olsson, \emph{Fourier-{M}ukai partners of {K}3 surfaces in
  positive characteristic}, Ann. Sci. \'{E}c. Norm. Sup\'{e}r. (4) \textbf{48}
  (2015), no.~5, 1001--1033. \MR{3429474}

\bibitem{Lombardi}
L.~Lombardi, \emph{Derived invariants of irregular varieties and {H}ochschild
  homology}, Algebra Number Theory \textbf{8} (2014), no.~3, 513--542.
  \MR{3218801}

\bibitem{BO}
M.~Olsson, \emph{The {BBD} gluing lemma}, preprint, 2021.

\bibitem{orlov}
D.~O. Orlov, \emph{Derived categories of coherent sheaves and equivalences
  between them}, Uspekhi Mat. Nauk \textbf{58} (2003), no.~3(351), 89--172.
  \MR{1998775}

\bibitem{popaschnell}
M.~Popa and C.~Schnell, \emph{Derived invariance of the number of holomorphic
  1-forms and vector fields}, Ann. Sci. \'Ec. Norm. Sup\'er. (4) \textbf{44}
  (2011), no.~3, 527--536. \MR{2839458}

\bibitem{rizzardo}
A.~Rizzardo and M.~Van~den Bergh, \emph{Scalar extensions of derived categories
  and non-{F}ourier-{M}ukai functors}, Adv. Math. \textbf{281} (2015),
  1100--1144. \MR{3366860}

\bibitem{rouquier}
R.~Rouquier, \emph{Automorphismes, graduations et cat\'egories triangul\'ees},
  J. Inst. Math. Jussieu \textbf{10} (2011), no.~3, 713--751. \MR{2806466
  (2012h:16019)}

\bibitem{stacks-project}
The {Stacks Project Authors}, \emph{\itshape stacks project},
  \url{http://stacks.math.columbia.edu}, 2016.

\bibitem{Toda}
Y.~Toda, \emph{Fourier-{M}ukai transforms and canonical divisors}, Compos.
  Math. \textbf{142} (2006), no.~4, 962--982. \MR{2249537 (2007d:14039)}

\bibitem{toen}
B.~To{\"e}n, \emph{The homotopy theory of {$dg$}-categories and derived
  {M}orita theory}, Invent. Math. \textbf{167} (2007), no.~3, 615--667.
  \MR{2276263 (2008a:18006)}

\end{thebibliography}

\end{document}